\newtheorem{theorem}{Theorem}[section]
\newtheorem{lemma}[theorem]{Lemma}
\newtheorem{proposition}[theorem]{Proposition}
\theoremstyle{definition}
\newtheorem{definition}[theorem]{Definition}
\newtheorem{question}[theorem]{Question}
\numberwithin{equation}{section}
\newcommand{\CC}{\mathbb C}
\newcommand{\HH}{\mathbb H}
\newcommand{\NN}{\mathbb N}
\newcommand{\PP}{\mathbb P}
\newcommand{\RR}{\mathbb R}
\newcommand{\ZZ}{\mathbb Z}
\newcommand{\cD}{\mathcal D}
\newcommand{\cA}{\mathcal A}
\newcommand{\cH}{\mathcal H}
\newcommand{\SL}{\mathop{\mathrm {SL}}\nolimits}
\newcommand{\orb}{\operatorname{orb}}
\newcommand{\w}{\operatorname{w}}
\newcommand{\Orth}{\operatorname{O}}
\newcommand{\Co}{\operatorname{Co}}
\newcommand{\bitem}{\begin{itemize}}
\newcommand{\eitem}{\end{itemize}}
\newcommand{\be}{\begin{equation}}
\newcommand{\ee}{\end{equation}}
\newcommand{\ba}{\begin{aligned}}
\newcommand{\ea}{\end{aligned}}
\begin{document}

\title[]{Conway invariant Jacobi forms on the Leech lattice}

\author{Kaiwen Sun}

\address{Korea Institute for Advanced Study, 85 Hoegiro, Dongdaemun-gu, Seoul, Korea}

\email{ksun@kias.re.kr}

\author{Haowu Wang}

\address{Center for Geometry and Physics, Institute for Basic Science (IBS), Pohang 37673, Korea}

\email{haowu.wangmath@gmail.com}

\subjclass[2020]{11F50}

\date{\today}

\keywords{Jacobi forms, Leech lattice, Conway group}

\begin{abstract}
In this paper we study Jacobi forms associated with the Leech lattice $\Lambda$ which are invariant under the Conway group $\Co_0$.  We determine and construct generators of modules of both weak and holomorphic Jacobi forms of integral weight and fixed index $t\leq 3$. As applications, (1) we find the modular linear differential equations satisfied by the holomorphic generators; (2) we determine the decompositions of many products of orbits of Leech vectors; (3) we calculate the intersections between orbits and Leech vectors; (4) we derive some conjugate relations among orbits modulo $t\Lambda$. 
\end{abstract}

\maketitle

\section{Introduction}
In 1985 Eichler and Zagier introduced the theory of Jacobi forms in their monograph \cite{EZ85}. These forms are holomorphic functions in two variables $(\tau, z) \in \HH \times \CC$ which are modular in $\tau$ with respective to $\SL_2(\ZZ)$ and quasi-periodic in $z$. Later, Gritsenko \cite{Gri88} defined Jacobi forms of lattice index by replacing $z$ with many variables associated with an integral positive-definite lattice. The Jacobi form creates an elegant bridge between different types of modular forms. For example, Jacobi forms can be identified as modular forms for the Weil representation of $\SL_2(\ZZ)$ through the theta decomposition, and are connected to modular forms on symmetric domains of type IV by the Fourier--Jacobi expansion (see \cite{Gri94}). Jacobi forms also have many applications in mathematical physics, such as the elliptic genera of some manifolds including $K3$ surfaces, and the topological string partition functions on various Calabi--Yau threefolds. It is a natural question to determine the structure of the space of Jacobi forms. This question was solved by Wirthm\"{u}ller \cite{Wir92} for Jacobi forms associated with root systems not of $E_8$ type, and by \cite{Wan21a, Wan21b, SW21} for the exceptional root system $E_8$. The ring of Jacobi forms indexed by lattices of rank two was recently described in \cite{WW21}. Little is known about spaces of Jacobi forms associated with other lattices, especially irreducible lattices of large rank. 

In this paper we investigate Jacobi forms on the Leech lattice which are invariant under the Conway group $\Co_0$, and give many applications to computational aspects of the Leech lattice. The Leech lattice is the unique even unimodular positive-definite lattice of rank $24$ which has no roots. It was discovered by Leech in 1967 \cite{Lee67}, and its uniqueness was proved by Conway in 1969 \cite{Con69a}. This lattice has many remarkable properties. For example, it plays a role in constructing the fake monster Lie algebra \cite{Bor90} and proving the monstrous moonshine conjecture \cite{Bor92}, and it achieves the densest sphere packing in $\RR^{24}$ \cite{CKMRV17}.  The group $\Co_0$ is the automorphism group of the Leech lattice, whose structure was first described by Conway \cite{Con69b}. The quotient of $\Co_0$ by its center gives a sporadic simple group of order $4,157,776,806,543,360,000$. Therefore, the Leech lattice is highly symmetrical, and we expect that the space of Conway invariant Jacobi forms will not be too large. Due to the importance of the Leech lattice and the Conway group, we also expect that Conway invariant Jacobi forms will have some applications in mathematics and physics.  These motivate us to study such Jacobi forms. 

We introduce the definition of Conway invariant Jacobi forms and state the main results. Let $\Lambda$ denote the Leech lattice equipped with bilinear form $(-,-)$.

\begin{definition}
Let $k\in \ZZ$ be an integer and $t\in \NN$ be a non-negative integer. If a holomorphic function $\varphi : \HH \times (\Lambda \otimes \CC) \rightarrow \CC$  satisfies the conditions
\begin{itemize}
\item[(i)] Conway invariance:
\begin{equation*}
\varphi(\tau, \sigma(\mathfrak{z}))=\varphi(\tau, \mathfrak{z}), \quad \sigma\in \Co_0,
\end{equation*}
\item[(ii)] Quasi-periodicity:
\begin{equation*}
\varphi (\tau, \mathfrak{z}+ x \tau + y)= \exp\left(-t\pi i (x,x)\tau - 2t\pi i (x,\mathfrak{z}) \right) \varphi ( \tau, \mathfrak{z} ), \quad x,y\in \Lambda,
\end{equation*}
\item[(iii)] Modularity:
\begin{equation*}
\varphi \left( \frac{a\tau +b}{c\tau + d},\frac{\mathfrak{z}}{c\tau + d} \right) = (c\tau + d)^k \exp\left( t\pi i \frac{c(\mathfrak{z},\mathfrak{z})}{c \tau + d}\right) \varphi ( \tau, \mathfrak{z} ), \quad \left( \begin{array}{cc}
a & b \\ 
c & d
\end{array} \right)   \in \SL_2(\ZZ),
\end{equation*}
\end{itemize}
and the Fourier expansion of $\varphi$ takes the form
\begin{equation*}
\varphi ( \tau, \mathfrak{z} )= \sum_{ n=0 }^\infty \sum_{ \ell \in \Lambda}f(n,\ell) q^n \zeta^\ell, \quad q=e^{2\pi i\tau}, \; \zeta^\ell = e^{2\pi i(\ell, \mathfrak{z})},
\end{equation*}
then it is called a \textit{Conway invariant weak Jacobi form} of weight $k$ and index $t$. If $\varphi$ further satisfies that $f(n,\ell)= 0$ whenever $2nt - (\ell,\ell) <0$, then it is called a \textit{Conway invariant holomorphic Jacobi form}. We denote the spaces of Conway invariant weak and holomorphic Jacobi forms of weight $k$ and index $t$ by $J_{k,\Lambda,t}^{\w,\Co_0}$ and $J_{k,\Lambda,t}^{\Co_0}$ respectively. 
\end{definition}

Jacobi forms of index $0$ are independent of $\mathfrak{z}$ and degenerate into usual modular forms on $\SL_2(\ZZ)$.
Since the Leech lattice $\Lambda$ is unimodular, the theta decomposition (see \cite[Corollary 2.6]{Gri94})) yields that every Conway invariant weak Jacobi form of weight $k$ and index $1$ is a holomorphic Jacobi form and can be expressed as $g(\tau)\Theta_\Lambda(\tau,\mathfrak{z})$,  where $g(\tau)$ is a modular form of weight $k-12$ on $\SL_2(\ZZ)$, and $\Theta_\Lambda(\tau,\mathfrak{z})$ is the  Jacobi theta function of the Leech lattice defined by
\begin{equation}\label{eq:theta}
    \Theta_{\Lambda}(\tau, \mathfrak{z}) = \sum_{\ell \in \Lambda} e^{\pi i (\ell,\ell)\tau + 2\pi i(\ell, \mathfrak{z})}. 
\end{equation}
In this paper we determine the generators of Conway invariant Jacobi forms of indices $2$ and $3$. Let $J_{*,\Lambda,t}^{\w, \Co_0}$ and $J_{*,\Lambda,t}^{\Co_0}$ denote the spaces of Conway invariant weak and holomorphic Jacobi forms of integral weight and given index $t$ respectively. Let $M_*(\SL_2(\ZZ))=\CC[E_4,E_6]$ be the ring of modular forms on $\SL_2(\ZZ)$. The following is our main theorem.

\begin{theorem}\label{MTH}
As free modules over $M_*(\SL_2(\ZZ))$, 
\begin{enumerate}
    \item $J_{*,\Lambda,2}^{\w, \Co_0}$ is generated by four forms of weights $-4$, $-2$, $0$, $0$. 
    \item $J_{*,\Lambda,2}^{\Co_0}$ is generated by four forms of weights $12$, $12$, $14$, $16$.
    \item $J_{*,\Lambda,3}^{\w, \Co_0}$ is generated by ten forms of weights $-14$, $-12$, $-12$, $-12$, $-10$, $-8$, $-6$, $-4$, $-2$, $0$.
    \item $J_{*,\Lambda,3}^{\Co_0}$ is generated by ten forms of weights $12$, $12$, $12$, $14$, $14$, $16$, $16$, $16$, $18$, $18$.  
\end{enumerate}
\end{theorem}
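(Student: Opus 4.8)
The plan is to pass from Jacobi forms to vector-valued modular forms via the theta decomposition, exploit that the Conway group commutes with the Weil representation, and then read off the generators from the resulting low-dimensional representation of $\SL_2(\ZZ)$. First I would set up the decomposition: since $\Lambda$ is unimodular, a Conway invariant Jacobi form $\varphi$ of weight $k$ and index $t$ has a unique expansion $\varphi=\sum_{\mu\in\Lambda/t\Lambda}h_\mu(\tau)\,\Theta_{\Lambda,t,\mu}(\tau,\mathfrak{z})$, where the index-$t$ theta functions $\Theta_{\Lambda,t,\mu}$ transform under $\SL_2(\ZZ)$ through the Weil representation $\rho_t$ on $W_t=\CC[\Lambda/t\Lambda]$ (the discriminant form of $\Lambda$ rescaled by $t$) and the tuple $(h_\mu)$ is, up to dualization, a vector-valued modular form of weight $k-12$ for $\rho_t$. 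Conditions (ii) and (iii) are exactly this transformation law, while the growth conditions translate into conditions at the cusp: holomorphic Jacobi forms correspond to vvmf holomorphic at the cusp, and weak Jacobi forms to vvmf with poles bounded by the fractional shift $\min_{\ell\equiv\mu}(\ell,\ell)/2t$ in each component.

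The decisive simplification is condition (i). Because $\Co_0\subset\Orth(\Lambda)$ preserves the quadratic form, its natural action on $W_t$ commutes with $\rho_t$, so the invariant subspace $W_t^{\Co_0}$ is an $\SL_2(\ZZ)$-subrepresentation $\bar\rho_t$, and Conway invariant Jacobi forms are precisely the vvmf valued in $W_t^{\Co_0}$. Its dimension $N_t=\dim W_t^{\Co_0}$ equals the number of $\Co_0$-orbits on $\Lambda/t\Lambda$; from the classification of the Leech lattice modulo $t$ one obtains $N_2=4$ and $N_3=10$, matching the number of generators claimed. I would then invoke the structure theorem for vector-valued modular forms — the module of holomorphic vvmf for a representation with $\rho(T)$ of finite order is free over $M_*(\SL_2(\ZZ))$ of rank equal to the dimension of the representation, and the analogous statement for weakly holomorphic forms with bounded pole order is likewise free of the same rank. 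This already establishes freeness and the rank $N_t$ for all four modules, so only the weights remain.

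To pin down the weights I would compute the Poincaré series of each module. In the orbit-sum basis $\{\sum_{\mu\in\mathcal{O}}\mathfrak{e}_\mu\}$ of $W_t^{\Co_0}$ the matrix $\bar\rho_t(T)$ is diagonal with entries $e^{\pi i(\mu,\mu)/t}$, constant on orbits since $\Co_0$ preserves norms, and $\bar\rho_t(S)$ is a rescaled finite Fourier transform whose entries are the character sums $\sum e^{-2\pi i(\mu,\nu)/t}$ over pairs of orbits — exactly the orbit-intersection data of application (3). Feeding $\bar\rho_t(S)$, $\bar\rho_t(T)$ into the standard dimension formula for vvmf on $\SL_2(\ZZ)$ gives $\dim M_k(\bar\rho_t)$ for all $k$, and since the module is free the numerator $(1-x^4)(1-x^6)\sum_k\dim_k x^k$ is a sum of $N_t$ monomials whose exponents are the generator weights; the weak versus holomorphic cusp conditions produce the two weight lists. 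Finally I would realize these weights by explicit generators built from $\Theta_\Lambda$, the ring $\CC[E_4,E_6]$, $\Co_0$-invariant Taylor (development) coefficients, and weight-raising differential operators, checking $M_*(\SL_2(\ZZ))$-linear independence on leading Fourier–Jacobi or Taylor coefficients; matching the Poincaré series then forces these forms to be a basis.

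The hard part will be the explicit determination of $\bar\rho_t(S)$ together with the low-weight end of the dimension formula, since the formula is unconditional only above a weight bound while the weak modules reach into negative weight (for instance the weight $-14$ generator at $t=3$). Here the orbit-intersection computation on $\Lambda/3\Lambda$, the construction of the extremal low-weight generators, and the verification that the proposed holomorphic generators genuinely satisfy the bound $2nt\geq(\ell,\ell)$ on every Fourier coefficient — rather than merely the correct leading behaviour — are where the real work lies.
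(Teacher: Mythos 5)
Your first step---theta decomposition, restriction to the $\Co_0$-invariant subrepresentation $W_t^{\Co_0}$ of the Weil representation, and freeness of rank $N_t=s(t)$ over $M_*(\SL_2(\ZZ))$---is essentially the paper's Lemma \ref{lem:weak-free} (the paper argues via Eichler--Zagier and Borcherds' obstruction principle rather than the free-module theorem for vector-valued modular forms, a cosmetic difference). The gap is in how you pin down the generator weights. The dimension formula is unconditional only above a weight bound, so it determines the Hilbert numerator $P(x)=\sum_i x^{k_i}$ only up to adding a multiple of $(1-x^4)(1-x^6)$ supported in low degrees; and constructing an explicit form of weight $-14$ shows only that the minimal weight is \emph{at most} $-14$. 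Your proposal contains no mechanism for proving $J_{k,\Lambda,2}^{\w,\Co_0}=\{0\}$ for $k<-4$ and $J_{k,\Lambda,3}^{\w,\Co_0}=\{0\}$ for $k<-14$, which is exactly the regime where the Riemann--Roch count degenerates. The paper supplies this missing ingredient by a separate device (Lemmas \ref{lem:minimal2} and \ref{lem:minimal3}): iterating the weight-raising heat operator on a putative form of too-negative weight and restricting each iterate to $\mathfrak{z}=0$ yields an overdetermined linear system on the $q^0$-term with only the trivial solution. Without this (or an equivalent vanishing statement) the freeness-plus-dimension-count cannot close.

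A second, concrete obstacle is your input $\bar\rho_3(S)$. Its entries are character sums over pairs of $\Co_0$-orbits in $\Lambda/3\Lambda$, i.e.\ the distribution of $(\mu,\nu)\bmod 3$ as $\nu$ ranges over orbits of size up to roughly $10^{12}$. You describe this as ``exactly the orbit-intersection data of application (3)'', but in the paper that data is an \emph{output}, extracted from the Fourier expansions of the Jacobi forms being constructed (via the Fourier--Jacobi expansion of Borcherds' $\Phi_{12}$ and the fake monster denominator identity, which give the orbit product decompositions of Appendix \ref{app:orbitprod}); it is not available as an independent input, and no feasible direct computation is indicated. Relatedly, the toolbox you reserve for realizing the generators ($\Theta_\Lambda$, $\CC[E_4,E_6]$, Taylor coefficients, $H$) is too small: the paper needs $\Phi_{12,t}$, the trace construction $B_t$ from $\Gamma_0(t)$, and for index $3$ the form $\Psi_{12,3}$, whose very holomorphy rests on the orbit-product identities. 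So while the skeleton of the plan is sound and its first half coincides with the paper's, the two load-bearing steps---the negative-weight vanishing and the explicit low-weight constructions---are precisely where the proposal is incomplete.
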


We sketch the main idea of the proof. We first use the differential operators approach in \cite{Wan21a} to estimate the minimal weight of weak Jacobi forms of a given index. Then we combine the arguments in \cite{Sak17, Wan21a, SW21} to construct generators. We also construct one of the singular-weight generators of  $J_{*,\Lambda,t}^{\Co_0}$ as the $t$-th Fourier--Jacobi coefficient of Borcherds' automorphic form $\Phi_{12}$ for the unimodular lattice $\mathrm{II}_{26,2}$ (see \cite{Bor95}). The main difficulty of the proof is to calculate the Fourier expansions of generators, because Conway invariant Jacobi forms have unwieldy Fourier expansions in $25$ variables. To overcome this difficulty, we write the Fourier expansion of a Jacobi form in terms of \textit{Conway orbits} defined as the $\Co_0$-invariant exponential polynomials
\begin{equation}\label{eq:orbit}
\orb(v) = \sum_{\sigma \in \Co_0  / (\Co_0)_v} e^{2\pi i(\sigma(v), \mathfrak{z})},   
\end{equation}
where $v \in \Lambda$ and $(\Co_0)_v$ is the stabilizer of $\Co_0$ with respect to $v$. The Conway orbits $\orb(v)$ of type $\frac{1}{2}(v,v)\leq 16$ are available in \cite{ATLAS}. In order to calculate the Fourier expansions of products of Jacobi forms, we have to know the decomposition of some products $\orb(v) \orb(u)$ into linear combinations of Conway orbits. We determine such non-trivial decompositions by comparing the Fourier--Jacobi expansion of $\Phi_{12}$ and the Borcherds denominator formula for the fake monster Lie algebra (see \cite{Bor90,Bor95}). Combining these arguments together, we prove the theorem. 

This paper is structured as follows. In \S \ref{sec:basic results} we collect and prove some basic results on Conway invariant Jacobi forms and orbits of Leech vectors. In \S \ref{sec:phi12} we introduce two methods to calculate the Fourier--Jacobi expansion of Borcherds' form $\Phi_{12}$, and determine the product decompositions of $\orb(v_2)\orb(v_2)$ and $\orb(v_2)\orb(v_3)$ by comparing the two methods, where $v_2$ and $v_3$ are Leech vectors of types $2$ and $3$. The main theorem is proved in \S \ref{sec:index2} for index $t=2$ and in \S \ref{sec:index3} for index $t=3$. In \S \ref{sec:applications} we present many applications of our main results. (i) We find the modular linear differential equations satisfied by some holomorphic Jacobi forms. (ii) We determine more product decompositions of Conway orbits by means of linear relations among Conway invariant holomorphic Jacobi forms of index $3$. These results are formulated in Appendix \ref{app:orbitprod}. (iii) We classify Conway invariant holomorphic Jacobi forms of singular weight $12$ and index $t\leq 3$ with non-trivial character. (iv) We use the Fourier expansions of our Jacobi forms to determine all conjugate relations among Conway orbits of type $\frac{1}{2}(v,v)\leq 16$ modulo $2\Lambda$ and $3\Lambda$. (v) We calculate the pullbacks of Conway invariant Jacobi forms and Conway orbits along Leech vectors of types $2$, $3$ and $4$. The pullbacks of Conway orbits are formulated in Appendix \ref{app:orbitinter}. In \S \ref{sec:high index} we discuss Conway invariant Jacobi forms of higher index and propose some open questions. We know from Borcherds' thesis \cite{Bor85} that $\Lambda / 4\Lambda$ have $31$ orbits with respect to $\Co_0$. We give an explicit description of the representative system of minimal length of $\Lambda / 4\Lambda$ in Theorem \ref{th:system4}. Borcherds' result yields that the rank of $J_{*,\Lambda,4}^{\w, \Co_0}$ is $31$. It seems very difficult to determine and construct the associated $31$ generators of index $4$.

\section{Conway invariant Jacobi forms}\label{sec:basic results}
In this section we prove some basic properties of Conway invariant Jacobi forms. Most of them are standard in the general theory of Jacobi forms. We also construct some basic Conway invariant holomorphic Jacobi forms. 

\subsection{Basic results} All results in this subsection are known to experts, and their analogues hold if we replace the Leech lattice and the Conway group with any even lattice and its orthogonal group. 
\begin{lemma}
There is no nonzero Conway invariant weak Jacobi form of odd weight. 
\end{lemma}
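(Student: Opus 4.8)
The plan is to exploit two independent sources of the reflection $\mathfrak{z} \mapsto -\mathfrak{z}$: one coming from the Conway group and one from the modular group, and to observe that for odd weight they impose incompatible parities on any such form.

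First I would record that the negation map $-\mathrm{id}_\Lambda$ belongs to $\Co_0$. Indeed, $-\mathrm{id}_\Lambda$ maps $\Lambda$ onto itself and preserves the bilinear form $(-,-)$, so it is an automorphism of the Leech lattice, hence an element of $\Co_0=\Orth(\Lambda)$. Applying the Conway invariance condition (i) with $\sigma=-\mathrm{id}_\Lambda$ then gives
\begin{equation*}
\varphi(\tau, -\mathfrak{z}) = \varphi(\tau, \mathfrak{z}).
\end{equation*}

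Next I would extract the complementary relation from the modularity condition (iii) by specializing to the central element $-I_2=\left(\begin{smallmatrix} -1 & 0 \\ 0 & -1\end{smallmatrix}\right)\in\SL_2(\ZZ)$. Here $c=0$ and $c\tau+d=-1$, so the exponential factor is trivial and condition (iii) reduces to
\begin{equation*}
\varphi(\tau, -\mathfrak{z}) = (-1)^k \varphi(\tau, \mathfrak{z}).
\end{equation*}

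Comparing the two displays yields $(-1)^k\varphi=\varphi$, that is $\bigl(1-(-1)^k\bigr)\varphi=0$. When $k$ is odd the factor $1-(-1)^k=2$ is nonzero, forcing $\varphi\equiv 0$. There is no genuine obstacle here; the only point worth flagging is the verification that $-\mathrm{id}_\Lambda\in\Co_0$, which is immediate since negation preserves every symmetric bilinear form. The same argument shows that odd-weight forms vanish for any even lattice and its orthogonal group in place of $\Lambda$ and $\Co_0$, consistent with the remark preceding the lemma.
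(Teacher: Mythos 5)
Your proposal is correct and is exactly the argument the paper intends with its one-line proof ``It follows from $-\mathrm{id}\in\Co_0$'': combine the Conway invariance under $-\mathrm{id}_\Lambda$ with the modular transformation under $-I_2\in\SL_2(\ZZ)$ to get $\varphi=(-1)^k\varphi$. You have simply written out the details the paper leaves implicit; there is nothing to change.
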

\begin{proof}
It follows from $-\mathrm{id} \in \Co_0$. 
\end{proof}

\begin{lemma}\label{lem:coefficients}
The Fourier expansion of $\varphi \in J_{k,\Lambda,t}^{\w, \Co_0}$ with $t\geq 1$ satisfies the following properties. 
\begin{enumerate}
    \item If $2n_1 t -(\ell_1,\ell_1) = 2n_2 t -(\ell_2,\ell_2)$ and $\ell_1 - \ell_2 \in t\Lambda$, then $f(n_1,\ell_1)=f(n_2,\ell_2)$.  In addition,
    $$
    f(n,\ell)=f(n,-\ell), \quad \text{for all $n\in\NN$ and $\ell\in \Lambda$.}
    $$
    \item If $f(n,\ell)\neq 0$, then 
    $$
    2n t - (\ell, \ell) \geq - \min\{ (v,v) : v \in \ell + t\Lambda \}.
    $$
\end{enumerate}
\end{lemma}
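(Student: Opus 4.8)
The plan is to read off both statements by inserting the Fourier expansion of $\varphi$ into the defining conditions and comparing coefficients, using only that $\Lambda$ is even and integral: $(\ell, y) \in \ZZ$ for $\ell, y \in \Lambda$ and $(x,x) \in 2\ZZ$ for $x \in \Lambda$.

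For part (1) I would substitute $\mathfrak{z} \mapsto \mathfrak{z} + x\tau + y$ with $x, y \in \Lambda$ into the series. On the left each monomial $q^n \zeta^\ell$ picks up $e^{2\pi i (\ell, x)\tau} e^{2\pi i (\ell, y)}$, and integrality makes $e^{2\pi i (\ell, y)} = 1$; on the right the automorphy factor in (ii) multiplies by $q^{-t(x,x)/2} \zeta^{-tx}$. Matching the coefficient of each $q^m \zeta^\ell$ on the two sides yields the master relation
\[
f(n, \ell) = f\!\left( n + (\ell, x) + \tfrac{t(x,x)}{2},\ \ell + tx \right), \qquad x \in \Lambda.
\]
A one-line check shows the shifted arguments preserve the hyperbolic norm, $2n't - (\ell + tx, \ell + tx) = 2nt - (\ell, \ell)$. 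Given $\ell_1 - \ell_2 \in t\Lambda$ with equal norms, I set $x = (\ell_2 - \ell_1)/t \in \Lambda$; the relation identifies $f(n_1, \ell_1)$ with $f(n', \ell_2)$, and norm preservation together with $t \geq 1$ forces $n' = n_2$, giving $f(n_1, \ell_1) = f(n_2, \ell_2)$. The companion identity $f(n, \ell) = f(n, -\ell)$ follows from the same coefficient comparison applied to invariance (i) with $\sigma = -\mathrm{id} \in \Co_0$.

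For part (2) I fix $\ell$ with $f(n, \ell) \neq 0$ and let $v = \ell + tx$ run over the coset $\ell + t\Lambda$. The master relation gives $f(n, \ell) = f(n_v, v)$ with $n_v = n + (\ell, x) + t(x,x)/2$. Here $(\ell, x) \in \ZZ$ by integrality and $t(x,x)/2 \in \ZZ$ by evenness, so $n_v \in \ZZ$, and the norm is again preserved: $2n_v t - (v,v) = 2nt - (\ell, \ell)$. Because $\varphi$ is a \emph{weak} Jacobi form, its coefficients vanish at negative index, so $f(n_v, v) \neq 0$ forces $n_v \geq 0$, i.e. $2nt - (\ell, \ell) \geq -(v,v)$; minimizing $(v,v)$ over the coset produces the claimed bound.

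The whole argument is formal bookkeeping with no genuine obstacle; the only points requiring care are the re-indexing in the coefficient comparison and the verification that the shifted index $n_v$ stays an integer, which is precisely where evenness of $\Lambda$ is used.
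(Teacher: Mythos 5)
Your proof is correct and is exactly the standard argument that the paper invokes by citing \cite[Lemma 2.1]{Gri94}: comparing Fourier coefficients in the quasi-periodicity law to get the relation $f(n,\ell)=f(n+(\ell,x)+t(x,x)/2,\ell+tx)$, which preserves the hyperbolic norm $2nt-(\ell,\ell)$, and then reading off both parts. The only (harmless) cosmetic difference is that you derive $f(n,\ell)=f(n,-\ell)$ from $-\mathrm{id}\in\Co_0$ via condition (i), whereas the paper gets it from the transformation under $-I_2\in\SL_2(\ZZ)$ together with the evenness of the weight; both are valid here.
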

\begin{proof}
It is a particular case of \cite[Lemma 2.1]{Gri94}. The proof follows from the quasi-periodicity and the transformation under $-I_2 \in \SL_2(\ZZ)$.
\end{proof}
The above $(2)$ yields that for any $n\geq 0$ the $q^n$-term of a Conway invariant weak Jacobi form $\varphi$ defined by
$$
[\varphi]_{q^n} = \sum_{\ell \in \Lambda} f(n,\ell) \zeta^\ell, \quad \zeta^\ell = e^{2\pi i(\ell, \mathfrak{z})}
$$
has only finitely many terms, and is actually an exponential polynomial invariant under $\Co_0$.

\begin{lemma}\label{Lem:holomorphic}
Let $\varphi$ be a non-constant Conway invariant holomorphic Jacobi form of weight $k$ and index $t\geq 1$. Then $k\geq 12$. Moreover, if $\varphi$ is of weight $12$, then its Fourier expansion has the form
$$
\varphi(\tau, \mathfrak{z}) = \sum_{n=0}^\infty\ \sum_{\substack{ \ell \in \Lambda \\ (\ell,\ell)=2nt}} f(n,\ell)q^n \zeta^\ell. 
$$
\end{lemma}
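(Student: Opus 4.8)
The plan is to reduce both assertions to the theory of vector-valued modular forms via the theta decomposition, and then to invoke the standard fact that holomorphic (vector-valued) modular forms of negative weight vanish while those of weight zero are constant. The weight $12 = \rk(\Lambda)/2$ is the \emph{singular weight} for Jacobi forms on a rank-$24$ lattice, and the claimed shape of the Fourier expansion is the usual singular-weight phenomenon, so the statement is really a bookkeeping enhancement of the index-$1$ computation recorded in the introduction.

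First I would set up the theta decomposition for index $t$. By Lemma \ref{lem:coefficients}(1) the coefficient $f(n,\ell)$ depends only on the pair $\bigl(2nt-(\ell,\ell),\ \ell \bmod t\Lambda\bigr)$. Grouping the Fourier series according to the residue $\mu = \ell + t\Lambda \in \Lambda/t\Lambda$ and introducing the index-$t$ theta functions
\[
\theta_{\Lambda,t,\mu}(\tau,\mathfrak{z}) = \sum_{\ell \equiv \mu \,(t\Lambda)} q^{(\ell,\ell)/(2t)}\,\zeta^\ell,
\]
one writes $\varphi = \sum_{\mu \in \Lambda/t\Lambda} h_\mu(\tau)\,\theta_{\Lambda,t,\mu}(\tau,\mathfrak{z})$, where $h_\mu(\tau) = \sum_m c_\mu(m)\,q^m$ collects the terms with $m = (2nt-(\ell,\ell))/(2t)$. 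By the theta decomposition (see \cite[Corollary 2.6]{Gri94}) the vector $(h_\mu)_\mu$ is a vector-valued modular form of weight $k-12$ for the Weil representation attached to the finite quadratic module $\Lambda/t\Lambda$, the shift by $12 = \rk(\Lambda)/2$ coming from the weight of the theta functions. Crucially, the holomorphic Jacobi condition $f(n,\ell)=0$ for $2nt-(\ell,\ell)<0$ translates precisely into $c_\mu(m)=0$ for $m<0$, i.e. each $h_\mu$ is holomorphic at the cusp, so $(h_\mu)_\mu$ is a genuine \emph{holomorphic} vector-valued modular form.

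With this in hand the two assertions follow from the archimedean growth of holomorphic modular forms. Since the Weil representation factors through a finite quotient of $\SL_2(\ZZ)$, each $h_\mu$ is a scalar holomorphic modular form of weight $k-12$ on some principal congruence subgroup $\Gamma(N)$, holomorphic at every cusp. If $k<12$ then $k-12<0$ and every such form vanishes by the valence formula, forcing $\varphi=0$ and contradicting the hypothesis that $\varphi$ is non-constant; hence $k\geq 12$. If $k=12$ then each $h_\mu$ has weight $0$, so it descends to a holomorphic function on the compact modular curve $X(N)$ and is therefore constant. Consequently $c_\mu(m)=0$ for all $m\neq 0$, which is exactly the statement that $f(n,\ell)=0$ unless $2nt-(\ell,\ell)=0$, i.e. $(\ell,\ell)=2nt$. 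This gives the asserted shape of the Fourier expansion.

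The routine parts are the indexing in the theta decomposition and the verification that the holomorphic Jacobi condition matches holomorphicity of the $h_\mu$; the only genuine inputs are the two standard facts for holomorphic modular forms (vanishing in negative weight, constancy in weight $0$), which I would either cite or deduce from the valence formula after unfolding the Weil representation to $\Gamma(N)$. I expect the one point deserving care to be the correct identification of the weight shift and of the quadratic module $\Lambda/t\Lambda$ in the theta decomposition for general index $t$, as opposed to the index-$1$ case $\varphi = g\,\Theta_\Lambda$ already recorded in the introduction; once that is in place the remainder is formal.
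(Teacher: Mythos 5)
Your proposal is correct and follows essentially the same route as the paper, whose proof simply cites the theta decomposition results of Gritsenko (\cite[Lemma 2.3, Lemma 2.5]{Gri94}); you have merely unpacked that citation into the explicit argument (decompose $\varphi=\sum_\mu h_\mu\theta_{\Lambda,t,\mu}$, note the weight shift by $\operatorname{rank}(\Lambda)/2=12$, and apply vanishing in negative weight and constancy in weight $0$). The details you supply, including the identification of the holomorphic Jacobi condition with holomorphy of the $h_\mu$ at the cusps, are accurate.
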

\begin{proof}
The claim is proved by the theta decomposition (see e.g. \cite[Lemma 2.3, Lemma 2.5]{Gri94}).
\end{proof}
The minimal positive weight $12$ is called the \textit{singular} weight of Conway invariant holomorphic Jacobi forms. By the two lemmas above, the Fourier coefficients $f(n,\ell)$ of a Conway invariant holomorphic Jacobi form of singular weight and index $t$ only depend on the $\Co_0$-orbit of the class of $\ell$ in $\Lambda / t\Lambda$.

We have defined the Conway orbit of any Leech vector in \eqref{eq:orbit}. Obviously, any $q^n$-term of a Conway invariant Jacobi form is a $\CC$-linear combination of finitely many Conway orbits. In view of the $q^0$-term of a Conway invariant weak Jacobi form of index $t\geq 1$ (see Lemma \ref{lem:coefficients}), we consider the Leech vectors of minimal norm modulo $t\Lambda$. Let us define
\begin{equation}\label{eq:St}
  \mathcal{S}_t=\{ v \in \Lambda: (v,v) \leq (u,u), \; \text{for all} \; u \in v + t\Lambda \}.
\end{equation}
Due to the Conway invariance, we further consider the orbit space $\mathcal{S}_t/ \Co_0$.  If two orbits $\Co_0 v$ and $\Co_0 u$ in $\mathcal{S}_t/ \Co_0$ are conjugate modulo $t\Lambda$, namely there exists $\sigma\in\Co_0$ such that $v-\sigma(u)\in t\Lambda$, then $(v,v)=(u,u)$. The conjugate orbits of the same norm in $\mathcal{S}_t/ \Co_0$ correspond to the same Fourier coefficient in the sense of Lemma \ref{lem:coefficients}. Thus we define the quotient space $(\mathcal{S}_t/ \Co_0)/t\Lambda$, and denote the number of its elements by 
\begin{equation}\label{eq:rank-s}
s(t):=|(\mathcal{S}_t/ \Co_0)/t\Lambda|.    
\end{equation}
In other words, $s(t)$ is the number of $\Co_0$-orbits of the representative system of minimal length of $\Lambda / t\Lambda$. For convenience, we call the Conway orbits
\begin{equation}
\orb(v), \quad v \in \mathcal{S}_t / \Co_0    
\end{equation}
the \textit{basic Conway orbits of index $t$}, as they appear in the $q^0$-terms of weak Jacobi forms of index $t$.

\begin{lemma}\label{lem:weak-free}
The space $J_{*,\Lambda,t}^{\w, \Co_0}$ of Conway invariant weak Jacobi forms of integral weight and fixed index $t\geq 1$ is a free module of rank $s(t)$ over $M_*(\SL_2(\ZZ))$. Moreover, the $q^0$-terms of generators are $\CC$-linear combinations of basic Conway orbits of index $t$, and thus linearly independent over $\CC$. The space $J_{*,\Lambda,t}^{\Co_0}$ is also a free module of the same rank $s(t)$ over $M_*(\SL_2(\ZZ))$.
\end{lemma}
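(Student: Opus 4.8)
The plan is to treat $M:=J_{*,\Lambda,t}^{\w, \Co_0}$ as a graded module over the polynomial ring $R:=M_*(\SL_2(\ZZ))=\CC[E_4,E_6]$, prove freeness by a homological criterion, and then pin down the rank by an orbit count. First I would record two elementary inputs. The module $M$ is torsion-free: a nonzero $f\in R$ is a nonzero holomorphic function, so multiplication by $f$ is injective on holomorphic functions, whence $f\varphi=0$ with $f\neq 0$ forces $\varphi=0$. Next, $M$ is finitely generated over $R$; this is the standard consequence of the theta decomposition, which embeds $M$ into the module of vector-valued modular forms for the Weil representation on $\CC[\Lambda/t\Lambda]$ whose components are weakly holomorphic with pole order bounded by the weak-growth condition of Lemma~\ref{lem:coefficients}(2). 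I would also note here that, taking $n=0$ in Lemma~\ref{lem:coefficients}(2), the $q^0$-term of any $\varphi\in M$ is supported on the minimal vectors $\mathcal{S}_t$ of \eqref{eq:St} and is $\Co_0$-invariant, hence is a $\CC$-linear combination of the basic Conway orbits $\orb(v)$, $v\in\mathcal{S}_t/\Co_0$; Lemma~\ref{lem:coefficients}(1) further forces equal coefficients on orbits conjugate modulo $t\Lambda$. Thus the space $V$ of admissible $q^0$-terms has dimension exactly $s(t)$ from \eqref{eq:rank-s}.

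To prove freeness I would invoke the Auslander--Buchsbaum criterion over the two-dimensional regular graded ring $R$: a finitely generated graded torsion-free $R$-module is free if and only if a homogeneous system of parameters acts as a regular sequence. I would take $(E_4,E_6)$, which is a system of parameters since $R/(E_4,E_6)$ is finite-dimensional. Here $E_4$ is a non-zero-divisor by torsion-freeness, so the point is that $E_6$ is a non-zero-divisor on $M/E_4M$. Suppose $E_6\varphi=E_4\psi$ with $\varphi,\psi\in M$. Since $E_4,E_6$ have no common zero on $\HH$ (otherwise $\Delta$ would vanish), on $\{E_4=0\}$ the function $E_6$ is nonvanishing, so $\varphi$ vanishes there; as $E_4$ has simple zeros, $\varphi/E_4$ is holomorphic on $\HH\times(\Lambda\otimes\CC)$, and since $E_4=1+O(q)$ is invertible at the cusp the quotient again has Fourier expansion supported in $n\geq 0$. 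Hence $\varphi/E_4\in M$, i.e. $\varphi\in E_4M$, proving regularity. Therefore $M$ is maximal Cohen--Macaulay, hence free. (Alternatively one may deduce freeness from that of the ambient vector-valued module together with the Reynolds operator for $\Co_0$ and the Quillen--Suslin theorem.)

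Via the theta decomposition I would next identify $M$ with the full $M_*$-module of $\Co_0$-invariant vector-valued modular forms whose components satisfy this pole bound. The weak-growth bound makes the admissible leading (polar and constant) parts of the components range exactly over the minimal-norm cosets, so both the rank of this module and the image of the $q^0$-term map are governed by the same data: the rank equals the number of $\Co_0$-orbits on $\Lambda/t\Lambda$, which is $s(t)$ (here $-\mathrm{id}\in\Co_0$ absorbs the symmetry $f(n,\ell)=f(n,-\ell)$), and the $q^0$-term map is onto the $s(t)$-dimensional space $V$. Consequently the $q^0$-terms of a minimal generating set form a basis of $V$; in particular they are $\CC$-linearly independent, which is the stated assertion about generators.

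Finally, for $J_{*,\Lambda,t}^{\Co_0}$ I would argue that the inclusion into $M$ is full-rank: for any weak form $\varphi$ the bound of Lemma~\ref{lem:coefficients}(2) shows $\Delta^N\varphi$ is holomorphic once $2Nt$ exceeds the finite defect $\max\{(v,v):v\in\mathcal{S}_t\}$, so the quotient module is annihilated by a power of $\Delta$, hence torsion, and the two modules share the rank $s(t)$. Being finitely generated and torsion-free, the holomorphic module is free by the same regular-sequence argument applied verbatim. The main obstacle throughout is the rank computation: one must verify that the weak-growth condition of Lemma~\ref{lem:coefficients} translates \emph{precisely} into the pole bounds on the theta components, so that the leading parts realize exactly the minimal cosets and no others, and that the conjugacy-modulo-$t\Lambda$ identification built into \eqref{eq:rank-s} is exactly the one produced by passing to $\Co_0$-invariants. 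Once these combinatorial identifications are pinned down, torsion-freeness, finite generation, and the homological freeness criterion are routine.
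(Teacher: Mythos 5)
Your overall architecture is sound, and your freeness argument is actually more self-contained than the paper's: where the paper simply says ``similar to the proof of \cite[Theorem 8.4]{EZ85}'', you prove that $(E_4,E_6)$ is a regular sequence on $M$ (torsion-freeness for $E_4$; the division argument on the zero locus of $E_4$, using that $E_4,E_6$ have no common zero and $E_4=1+O(q)$ is a unit on Fourier expansions, for $E_6$ on $M/E_4M$) and then conclude freeness from Auslander--Buchsbaum. That is a legitimate and arguably cleaner route to freeness, and your reduction of the holomorphic case to the weak case via $\Delta$-power torsion matches the paper. Your derivation of the linear independence of the generators' $q^0$-terms from freeness plus surjectivity is also consistent with the paper's contradiction argument (a form with vanishing $q^0$-term is $\Delta$ times a weak form, violating freeness).

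The genuine gap is the rank lower bound, i.e.\ the surjectivity of the $q^0$-term map onto the $s(t)$-dimensional space $V$. Lemma \ref{lem:coefficients} only gives the \emph{upper} bound: the $q^0$-term of any $\varphi\in J_{*,\Lambda,t}^{\w,\Co_0}$ lies in $V$, whence $\operatorname{rank}\leq s(t)$ once independence of the generators' $q^0$-terms is known. The statement that ``the admissible leading parts range \emph{exactly} over the minimal-norm cosets'' is not a combinatorial bookkeeping identity to be ``verified'': it is an existence theorem asserting that every prescribed principal part supported on $(\mathcal{S}_t/\Co_0)/t\Lambda$ is realized by an actual weak Jacobi form of some weight, and nothing in your write-up produces these forms. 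The paper supplies exactly this input via Borcherds' obstruction principle \cite{Bor99} (a principal part is realizable once it pairs trivially with the relevant space of cusp forms, which can be arranged in sufficiently large weight); an alternative that fits your vector-valued framing would be the free-module theorem for holomorphic vector-valued modular forms applied to the $\Co_0$-invariant subrepresentation $\CC[\Lambda/t\Lambda]^{\Co_0}$ of the Weil representation, whose dimension is the number of $\Co_0$-orbits on $\Lambda/t\Lambda$, i.e.\ $s(t)$. Without one of these tools your argument proves only that $M$ is free of rank at most $s(t)$, not that the rank equals $s(t)$, and the independence of the generators' $q^0$-terms then no longer pins down their number.
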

\begin{proof}
For any $\varphi \in J_{k,\Lambda,t}^{\w,\Co_0}$ the product $\Delta^{d}\varphi$ is a holomorphic Jacobi form for all integers $d$ satisfying
$$
d\geq  \frac{1}{2t}\cdot \max\{ (v,v): v\in \mathcal{S}_t \}.
$$
Thus the weight of Conway invariant weak Jacobi forms is bounded from below. Similar to the proof of \cite[Theorem 8.4]{EZ85}, we show that $J_{*,\Lambda,t}^{\w, \Co_0}$ and $J_{*,\Lambda,t}^{\Co_0}$ are free modules of the same rank over $M_*(\SL_2(\ZZ))$. Lemma \ref{lem:coefficients} yields that the $q^0$-term of a Conway invariant weak Jacobi form of index $t$ is a $\CC$-linear combination of the Conway orbits associated with vectors in $\mathcal{S}_t$. On the other hand, we view Conway invariant weak Jacobi forms of index $t$ as modular forms for the Weil representation attached to $\Lambda / t\Lambda$ (see \cite{Bor98}). By the obstruction principle in \cite{Bor99}, there exist weak Jacobi forms of sufficiently large weight whose $q^0$-term is a single basic Conway orbit of index $t$ (up to conjugate modulo $t\Lambda$). It follows that the rank is $s(t)$. The $q^0$-terms of generators are linearly independent, otherwise a suitable $\CC \cdot E_4^a E_6^b$-linear combination of generators will give a weak Jacobi form whose $q^0$-term is zero, and therefore can be written as the product of $\Delta$ and a $\CC[E_4,E_6]$-linear combination of generators. This contradicts the freeness of $J_{*,\Lambda,t}^{\w,\Co_0}$ as a $\CC[E_4,E_6]$-module.
\end{proof}

\begin{lemma}\label{lem:weak-holo}
For any even integer $k\geq 14$, the following identity holds
\begin{equation}
    \dim J_{k,\Lambda,t}^{\w, \Co_0} - \dim J_{k,\Lambda,t}^{\Co_0}=\delta_t, \quad t\geq 1,
\end{equation}
where $\delta_t$ is defined as
\begin{equation}
    \delta_t = \sum_{v\in (\mathcal{S}_t/\Co_0)/t\Lambda} \epsilon\left(\frac{(v,v)}{2t}\right),
\end{equation}
here $\epsilon(x):=\min\{ n\in \ZZ: n\geq x \}$. 
\end{lemma}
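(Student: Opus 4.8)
The plan is to express the difference $\dim J_{k,\Lambda,t}^{\w,\Co_0}-\dim J_{k,\Lambda,t}^{\Co_0}$ as the dimension of a space of admissible \emph{principal parts}, to count that space, and to show that every principal part occurs once $k\geq 14$. Since $J_{k,\Lambda,t}^{\Co_0}\subseteq J_{k,\Lambda,t}^{\w,\Co_0}$, I would introduce the linear map $P$ that sends $\varphi=\sum f(n,\ell)q^n\zeta^\ell$ to the collection of its Fourier coefficients $f(n,\ell)$ with $2nt-(\ell,\ell)<0$. By Lemma~\ref{lem:coefficients}(2) every such coefficient satisfies $2nt-(\ell,\ell)\geq -(v,v)$, where $v$ is a minimal vector of the coset $\ell+t\Lambda$, so the target of $P$ is finite-dimensional; and by the very definition of a holomorphic Jacobi form, $\ker P=J_{k,\Lambda,t}^{\Co_0}$. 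Consequently $\dim J_{k,\Lambda,t}^{\w,\Co_0}-\dim J_{k,\Lambda,t}^{\Co_0}=\rk P$, and it remains to prove $\rk P=\delta_t$.

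To describe the target of $P$, fix a basic orbit $v\in(\mathcal{S}_t/\Co_0)/t\Lambda$. Writing $\ell=v+tw$ and using that $\Lambda$ is even gives $(\ell,\ell)\equiv(v,v)\pmod{2t}$ for every $\ell\in v+t\Lambda$, so the quantity $D:=2nt-(\ell,\ell)$ is confined to the residue class $-(v,v)$ modulo $2t$. Its admissible negative values are therefore $D=-(v,v)+2tj$ with $0\leq j<(v,v)/(2t)$, each attained by $(n,\ell)=(j,v)$, and there are exactly $\epsilon\big((v,v)/(2t)\big)$ of them. By Lemma~\ref{lem:coefficients}(1) the coefficient $f(n,\ell)$ depends only on $D$ and on the class of $\ell$ in $\Lambda/t\Lambda$ up to $\Co_0$-conjugacy, so these slots, summed over the basic orbits, index a spanning set of the target of size $\delta_t=\sum_v\epsilon\big((v,v)/(2t)\big)$. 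This already gives $\rk P\leq\delta_t$ for every $k$.

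The remaining and decisive step is surjectivity of $P$ for $k\geq 14$; this is the main obstacle. Here I would pass to the theta decomposition, identifying $J_{k,\Lambda,t}^{\w,\Co_0}$ and $J_{k,\Lambda,t}^{\Co_0}$ with the $\Co_0$-invariant weakly holomorphic, respectively holomorphic, vector-valued modular forms of weight $k-12$ for the Weil representation attached to $\Lambda/t\Lambda$ \cite{Bor98}. Under this identification $P$ becomes the passage to the polar parts of the components, and the $\delta_t$ slots above are precisely the $\Co_0$-invariant polar coefficients. By Borcherds' obstruction principle \cite{Bor99}, a prescribed principal part is realized by an honest weakly holomorphic form if and only if it pairs to zero with every cusp form of the dual weight $2-(k-12)=14-k$ for the dual representation. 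For $k\geq 14$ this weight is non-positive, so the relevant space of cusp forms vanishes and the obstruction is vacuous; averaging any realization over the finite group $\Co_0$ then produces a $\Co_0$-invariant weak form with the given (already invariant) principal part. Hence $P$ is surjective, $\rk P=\delta_t$, and the stated identity follows. The inequality $\rk P\leq\delta_t$ is pure bookkeeping from Lemma~\ref{lem:coefficients}, so the entire weight of the argument sits in the vanishing of the obstruction space, which is exactly what the hypothesis $k\geq 14$ guarantees.
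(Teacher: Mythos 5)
Your proposal is correct and follows essentially the same route as the paper's (sketched) proof: the paper's two ingredients are exactly your two steps, namely that holomorphicity is equivalent to the vanishing of the $\delta_t$ polar slots $q^n\orb(v)$, $0\leq n<\frac{1}{2t}(v,v)$, and that Borcherds' obstruction principle realizes each such slot individually once $k\geq 14$. You have simply filled in the bookkeeping (the congruence $D\equiv -(v,v)\bmod 2t$ and the count $\epsilon((v,v)/2t)$) and the vanishing of the dual cusp-form space in weight $14-k\leq 0$, which the paper leaves implicit by citing \cite{SW21}.
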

\begin{proof}
The proof is similar to that of \cite[Proposition 5.1]{SW21}. We only mention two essential ingredients of the proof. (a) A Conway invariant weak Jacobi form of index $t$ is a holomorphic Jacobi form if and only if its Fourier expansion does not contain the following terms
$$
q^n \orb(v), \quad 0\leq n < \frac{1}{2t}(v,v), \quad v \in (\mathcal{S}_t/\Co_0)/t\Lambda.
$$
(b) When the weight $k\geq 14$, the obstruction principle yields the existence of Conway invariant weak Jacobi forms of index $t$ whose Fourier expansion involves only one of the above terms. 
\end{proof}

\subsection{The construction of Jacobi forms} We introduce two standard methods to construct Jacobi forms. The first one is the differential operators which raise the weight of Jacobi forms. 

\begin{lemma}\label{lem:diffoperator}
Given a Conway invariant weak Jacobi form of weight $k$ and index $t\geq 1$
$$
\varphi(\tau,\mathfrak{z})=\sum_{n=0}^\infty \sum_{r \in \Lambda/\Co_0} f(n,r)q^n \cdot \orb(r).
$$
Then $H_{k}(\varphi)$ is a Conway invariant weak Jacobi form of weight $k+2$ and index $t$, where 
\begin{align*}
H_k(\varphi)(\tau,\mathfrak{z})&=\mathcal{H}(\varphi)(\tau,\mathfrak{z})+\frac{12-k}{12}E_2(\tau)\varphi(\tau,\mathfrak{z}),\\
\mathcal{H}(\varphi)(\tau,\mathfrak{z})&=\sum_{n\in \NN}\sum_{r \in \Lambda/\Co_0} \left(n-\frac{(r,r)}{2t} \right)f(n,r)q^n \cdot \orb(r),
\end{align*}
and $E_2(\tau)=1-24\sum_{n\geq 1}\sigma(n)q^n$ is the Eisenstein series of weight $2$ on $\SL_2(\ZZ)$.
\end{lemma}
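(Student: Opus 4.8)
The plan is to realize $\mathcal{H}$ as a genuine differential operator and then check the three defining conditions of a weak Jacobi form for $H_k(\varphi)$, with only modularity being nontrivial.

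First I would put $\mathcal{H}$ in closed form. Fixing a basis of $\Lambda\otimes\CC$ and letting $\Delta_{\mathfrak z}$ denote the Laplacian attached to the bilinear form $(-,-)$, a direct computation on a single exponential monomial $q^n\zeta^\ell$ gives
\[
\frac{1}{2\pi i}\partial_\tau(q^n\zeta^\ell) = n\, q^n\zeta^\ell, \qquad \frac{1}{8\pi^2 t}\Delta_{\mathfrak z}(q^n\zeta^\ell) = -\frac{(\ell,\ell)}{2t}\,q^n\zeta^\ell,
\]
so that
\[
\mathcal{H} = \frac{1}{2\pi i}\partial_\tau + \frac{1}{8\pi^2 t}\Delta_{\mathfrak z}
\]
acts on $q^n\zeta^\ell$ as multiplication by $n-(\ell,\ell)/(2t)$, matching the stated expansion once the monomials $\zeta^\ell$ are grouped into the orbits $\orb(r)$. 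In particular $\mathcal{H}$ scales the coefficient $f(n,\ell)$ by $(2nt-(\ell,\ell))/(2t)$, a quantity depending only on the hyperbolic norm $D=2nt-(\ell,\ell)$.

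Next come the two easy conditions. Conway invariance is immediate from the orbit expansion (equivalently, $\Co_0$ acts by isometries and hence commutes with $\partial_\tau$ and $\Delta_{\mathfrak z}$, while $E_2$ depends only on $\tau$). The Fourier condition $n\geq 0$ is preserved because $\mathcal{H}$ merely rescales coefficients and $E_2=1-24\sum_{n\geq1}\sigma(n)q^n$ has no negative $q$-powers. For quasi-periodicity I would use that, for a function with the given Fourier expansion, condition (ii) translates via matching Fourier coefficients into the relations of Lemma \ref{lem:coefficients}(1): $f(n_1,\ell_1)=f(n_2,\ell_2)$ whenever $\ell_1-\ell_2\in t\Lambda$ and $2n_1t-(\ell_1,\ell_1)=2n_2t-(\ell_2,\ell_2)$, together with $f(n,\ell)=f(n,-\ell)$. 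Since $\mathcal{H}$ scales $f(n,\ell)$ by $D/(2t)$ and these relations only identify coefficients sharing the same $D$, they are preserved; the same holds for $E_2\varphi$, which is quasi-periodic of index $t$ because $E_2$ depends only on $\tau$. Hence $H_k(\varphi)$ is quasi-periodic of index $t$.

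The heart of the proof is the modular transformation, i.e.\ condition (iii) with weight $k+2$. Here $\mathcal{H}$ alone fails: substituting $(\tau,\mathfrak z)\mapsto(\frac{a\tau+b}{c\tau+d},\frac{\mathfrak z}{c\tau+d})$ into $\mathcal{H}\varphi$ and invoking the modularity of $\varphi$, the chain rule produces, besides the expected factor $(c\tau+d)^{k+2}e^{t\pi i c(\mathfrak z,\mathfrak z)/(c\tau+d)}$, an anomalous term proportional to $\frac{c}{c\tau+d}$. This anomaly arises from $\partial_\tau$ hitting $(c\tau+d)^k$ (contributing the $k$-dependence) and from $\Delta_{\mathfrak z}$ hitting the Gaussian automorphy factor $e^{t\pi i c(\mathfrak z,\mathfrak z)/(c\tau+d)}$, whose trace term carries the rank of $\Lambda$ — this is exactly where $12=\tfrac{1}{2}\rk\Lambda$ enters. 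On the other side, the quasimodular transformation
\[
E_2\!\left(\frac{a\tau+b}{c\tau+d}\right)=(c\tau+d)^2E_2(\tau)+\frac{12}{2\pi i}\,c(c\tau+d)
\]
makes $E_2\varphi$ transform with its own anomaly coming from the $\frac{12}{2\pi i}c(c\tau+d)$ term. I would then verify that the prefactor $\frac{12-k}{12}$ is precisely the value for which the two anomalies cancel, leaving $H_k(\varphi)$ covariant of weight $k+2$ and index $t$. This cancellation is the only real computation, and the main obstacle is the careful bookkeeping of the mixed $\tau$- and $\mathfrak z$-derivatives of the automorphy factor; the identity is the lattice-index, $\Co_0$-equivariant analogue of the classical Eichler--Zagier heat operator and may alternatively be cited from the general theory of differential operators on Jacobi forms of lattice index used in \cite{Wan21a}.
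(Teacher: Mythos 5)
Your argument is correct and is essentially the proof the paper outsources to \cite[Lemma 2.2]{Wan21IMRN} (and \cite{EZ85, CK00}): you realize $\mathcal{H}$ as the normalized heat operator $\frac{1}{2\pi i}\partial_\tau+\frac{1}{8\pi^2 t}\Delta_{\mathfrak{z}}$, note that it preserves Conway invariance and the coefficient relations of Lemma \ref{lem:coefficients}(1) because it rescales $f(n,\ell)$ by a function of $2nt-(\ell,\ell)$ alone, and cancel its modular anomaly, whose coefficient is $k-\tfrac{1}{2}\rk\Lambda=k-12$, against the quasimodular anomaly of $\frac{12-k}{12}E_2$. Since the paper gives no details beyond the citation, your sketch --- including the correct identification of where $12=\tfrac{1}{2}\rk\Lambda$ enters through the trace term of $\Delta_{\mathfrak{z}}$ acting on the Gaussian automorphy factor --- is exactly the intended argument.
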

\begin{proof}
The construction of $H_k$ relies heavily on the transformation laws of the heat operator $\mathcal{H}$ with respect to $\SL_2(\ZZ)$ which were described in \cite{EZ85, CK00}. We refer to \cite[Lemma 2.2]{Wan21IMRN} for a proof.
\end{proof}

Subsequently, if there is no confusion, we will write $H\varphi = H_k(\varphi)$, and denote the $d$-th composition of $H$ by $H^d$ for short. 

\begin{lemma}\label{lem:weight-0-identity}
The $q^0$-term of any $\varphi \in J_{0,\Lambda,t}^{\w, \Co_0}$ satisfies the following identity
\begin{equation*}
\sum_{r \in \Lambda/\Co_0} \left( 2t - (r,r)  \right) f(0,r) |\orb(r)|=0.
\end{equation*}
\end{lemma}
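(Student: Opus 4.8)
The plan is to exploit the weight-raising operator $H_0$ from Lemma~\ref{lem:diffoperator} together with the restriction to $\mathfrak{z}=0$. Given $\varphi\in J_{0,\Lambda,t}^{\w,\Co_0}$, the form $H_0(\varphi)=\mathcal{H}(\varphi)+E_2\varphi$ is a Conway invariant weak Jacobi form of weight $2$ and index $t$. The decisive observation is that restricting a weight-$k$ weak Jacobi form to $\mathfrak{z}=0$ produces a modular form of weight $k$ on $\SL_2(\ZZ)$: setting $\mathfrak{z}=0$ in the modularity condition (iii) kills the exponential factor, and the restriction is holomorphic at the cusp because only non-negative powers of $q$ occur in the Fourier expansion. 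Since $\dim M_2(\SL_2(\ZZ))=0$, we conclude that $H_0(\varphi)(\tau,0)\equiv 0$, and in particular its $q^0$-coefficient vanishes. The whole identity then drops out by reading off this single coefficient.

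To evaluate the restriction I would use that each orbit collapses to its cardinality: since every $\sigma\in\Co_0$ is an isometry, $\orb(r)|_{\mathfrak{z}=0}=\sum_{\sigma}1=|\orb(r)|$. Hence
$$
\varphi(\tau,0)=\sum_{n,r} f(n,r)\,|\orb(r)|\,q^n,\qquad
\mathcal{H}(\varphi)(\tau,0)=\sum_{n,r}\Bigl(n-\tfrac{(r,r)}{2t}\Bigr)f(n,r)\,|\orb(r)|\,q^n.
$$
Because $E_2=1-24\sum_{m\geq1}\sigma(m)q^m=1+O(q)$, the $q^0$-coefficient of $E_2\,\varphi$ restricted to $\mathfrak{z}=0$ equals $\sum_r f(0,r)|\orb(r)|$, while the $q^0$-coefficient of $\mathcal{H}(\varphi)(\tau,0)$ equals $-\tfrac{1}{2t}\sum_r (r,r)f(0,r)|\orb(r)|$.

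Adding these and setting the sum equal to zero gives $-\tfrac{1}{2t}\sum_r (r,r)f(0,r)|\orb(r)|+\sum_r f(0,r)|\orb(r)|=0$; multiplying through by $2t$ yields exactly $\sum_{r\in\Lambda/\Co_0}(2t-(r,r))f(0,r)|\orb(r)|=0$, as claimed. There is no serious obstacle here: the argument is entirely formal once Lemma~\ref{lem:diffoperator} is in hand, and the only genuine content is the choice to apply $H_0$ rather than some higher $H_k$, so that the target weight $2$ lands in the trivial space $M_2(\SL_2(\ZZ))=0$. One should take only minor care that restriction to $\mathfrak{z}=0$ commutes with extraction of $q$-coefficients, which is immediate from the local uniform convergence of the Fourier expansion of a weak Jacobi form.
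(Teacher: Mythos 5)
Your proof is correct and follows essentially the same route as the paper, which likewise deduces the identity from the vanishing of the constant term of $H_0(\varphi)(\tau,0)$ (citing it as a special case of a result of Gritsenko); your observation that the restriction lands in $M_2(\SL_2(\ZZ))=\{0\}$ is exactly the content behind that vanishing, and the coefficient bookkeeping is right.
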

\begin{proof}
It is a particular case of \cite[Proposition 2.6]{Gri18}. It follows from that the constant term of $H_0(\varphi)(\tau,0)$ is zero.
\end{proof}

The second method is the Hecke operators which raise the index of Jacobi forms. 
\begin{lemma}\label{lem:index}
Let $m$ be a positive integer and $\varphi \in J^{\w, \Co_0}_{k,\Lambda,t}$. Then we have
$$ 
(\varphi\lvert_k T_{-}(m))(\tau,\mathfrak{z})=m^{-1}\sum_{\substack{ ad=m, a>0\\ 0\leq b<d }}a^{k}\varphi\left(\frac{a\tau+b}{d},a\mathfrak{z}\right) \in J^{\w ,\Co_0}_{k,\Lambda,mt},
$$
and the Fourier expansion of $\varphi\lvert_k T_{-}(m)$ is given by
$$
\left(\varphi\lvert_k T_{-}(m)\right)(\tau,\mathfrak{z})=\sum_{\substack{n\in \NN\\ \ell\in \Lambda}}\sum_{\substack{d\geq 1 \\ d| (n,\ell,m)}}d^{k-1}f\left(\frac{n m}{d^2},\frac{\ell}{d}\right)q^n\zeta^\ell\, ,
$$
where $f(n,\ell)$ are Fourier coefficients of $\varphi$, and $d | (n,\ell,m)$ means that $d|n$, $d|m$ and $d^{-1}\ell \in \Lambda$. 
\end{lemma}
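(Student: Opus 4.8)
The plan is to recognize $T_-(m)$ as the lattice-index analogue of the Eichler--Zagier index-raising operator $V_m$ (cf. \cite{EZ85, Gri94}) and to verify the three defining transformation laws directly from the coset decomposition, after which the Fourier expansion falls out of a short computation. The matrices $\left(\begin{smallmatrix} a & b \\ 0 & d\end{smallmatrix}\right)$ with $ad=m$, $a>0$ and $0\le b<d$ form a complete set of representatives for the left action of $\SL_2(\ZZ)$ on the integral $2\times 2$ matrices of determinant $m$, and the sum in the definition runs over exactly these representatives; each summand is $\varphi(\tau', a\mathfrak{z})$ with $\tau'=\tfrac{a\tau+b}{d}$. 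Conway invariance is immediate, since $\sigma\in\Co_0$ acts linearly, so $a\sigma(\mathfrak{z})=\sigma(a\mathfrak{z})$ and the invariance of $\varphi$ passes to each term.

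Next I would check quasi-periodicity of index $mt$. Translation by a pure lattice vector $\mathfrak{z}\mapsto\mathfrak{z}+y$ is trivial, because an index-$t$ form is genuinely $\Lambda$-periodic and $ay\in\Lambda$. For the elliptic translation $\mathfrak{z}\mapsto\mathfrak{z}+x\tau$ I would rewrite $a x\tau = (dx)\tau' - bx$ inside each summand and apply the index-$t$ quasi-periodicity of $\varphi$ with respect to the lattice vector $dx\in\Lambda$, followed by $\Lambda$-periodicity to remove $-bx$. Using $ad=m$ one finds $(dx,dx)\tau' = m(x,x)\tau + bd(x,x)$ and $(dx,a\mathfrak{z})=m(x,\mathfrak{z})$, while the residual phase $\exp(-t\pi i\, bd(x,x))$ is trivial because $\Lambda$ is even. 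This reproduces precisely the index-$mt$ factor.

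For $\SL_2(\ZZ)$-modularity I would use the standard coset-permutation argument: given $\gamma\in\SL_2(\ZZ)$, for each representative $M$ the product $M\gamma$ has determinant $m$ and can be written uniquely as $\gamma' M'$ with $\gamma'\in\SL_2(\ZZ)$ and $M'$ again one of the representatives, and as $M$ runs over the representatives so does $M'$. Absorbing $\gamma'$ via the index-$t$ modularity of $\varphi$ and tracking the factors, the sum transforms with the index-$mt$ cocycle. I expect this to be the main obstacle, as it requires careful bookkeeping of the automorphy factor $(c\tau+d)^{k}\exp\!\big(t\pi i\, c(\mathfrak{z},\mathfrak{z})/(c\tau+d)\big)$ through the reduction of $M\gamma$; but it is entirely parallel to the classical computation for $V_m$ and can be cited from \cite{Gri94}.

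Finally, for the Fourier expansion I would substitute the Fourier series of $\varphi$ into each summand, producing the $q$-power $na/d$ with a phase $e^{2\pi i nb/d}$; summing over $0\le b<d$ yields the factor $d$ together with the constraint $d\mid n$. Writing $n=d\tilde n$ and using $m^{-1}a^{k}d=a^{k-1}$ from $ad=m$, each summand becomes $a^{k-1}\sum f(d\tilde n,\ell)\,q^{a\tilde n}\zeta^{a\ell}$. Collecting the coefficient of $q^{n}\zeta^{\ell}$ over all factorizations $ad=m$ forces $a\mid n$ and $a^{-1}\ell\in\Lambda$, and renaming the summation divisor $a$ as $d$ gives exactly the stated formula. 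Holomorphy and the support condition $n\ge 0$, so that $\varphi\lvert_{k} T_-(m)$ is again a weak Jacobi form, follow since $f(d\tilde n,\ell)=0$ for $\tilde n<0$.
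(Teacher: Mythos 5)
Your proposal is correct. The paper does not actually prove this lemma: it simply cites it as a special case of \cite[Corollary 1 of Proposition 4]{Gri88} and \cite[Corollary 2.9]{Gri94}, so what you have written out is precisely the standard verification underlying those references. All the key points check out: the coset representatives $\left(\begin{smallmatrix} a & b \\ 0 & d\end{smallmatrix}\right)$ are the right ones; in the elliptic translation the identities $(dx,dx)\tau'=m(x,x)\tau+bd(x,x)$ and $(dx,a\mathfrak{z})=m(x,\mathfrak{z})$ together with the evenness of $\Lambda$ (killing the residual phase $\exp(-t\pi i\,bd(x,x))$) produce exactly the index-$mt$ cocycle; the coset-permutation argument handles $\SL_2(\ZZ)$-modularity; and the sum over $b$ yields the divisibility constraint $d\mid n$, after which the substitution $n=d\tilde n$ and $m^{-1}a^k d=a^{k-1}$ give the stated Fourier coefficients $\sum_{d\mid(n,\ell,m)}d^{k-1}f(nm/d^2,\ell/d)$. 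The only thing you gain over the paper is self-containedness; the only thing you lose is brevity.
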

\begin{proof}
It is a particular case of \cite[Corollary 1 of Proposition 4]{Gri88} or \cite[Corollary 2.9]{Gri94}.
\end{proof}

\subsection{Orbits of Leech vectors under the Conway group}\label{subsec:orbits}
Following the ATLAS \cite{ATLAS}, we define the \textit{type} of a Leech vector $v$ as half of its norm, i.e. $\frac{1}{2}(v,v)$. 
One can read the number $\lambda_n$ of Leech vectors of type $n$ from the theta function of the Leech lattice
\begin{equation}
\theta_\Lambda(\tau) = \sum_{\ell \in \Lambda} e^{\pi i (\ell,\ell)\tau} = \sum_{n=0}^\infty \lambda_n q^n. 
\end{equation}
We note that $\theta_\Lambda(\tau)=\Theta_\Lambda(\tau,0)$ (see \eqref{eq:theta}).
It is well-known that $\theta_\Lambda$ is a modular form of weight $12$ on $\SL_2(\ZZ)$, and it can be calculated via (see e.g. \cite{CS99})
\begin{equation}
\begin{aligned}
\theta_\Lambda(\tau) &= E_{12}(\tau) -  \frac{65520}{691}\Delta(\tau) = 1 + \frac{65520}{691} \sum_{n=1}^\infty (\sigma_{11}(n)-\tau(n))q^n.
\end{aligned}
\end{equation}
where $E_{12}(\tau)=1+O(q)$ is the normalized Eisenstein series of weight $12$, and 
\begin{equation}\label{eq:Delta}
\Delta(\tau) = q\prod_{n=1}^\infty(1-q^n)^{24} = \sum_{n=1}^\infty \tau(n)q^n  \end{equation}
is the unique normalized cusp form of weight $12$ on $\SL_2(\ZZ)$.

The Conway group $\Co_0$ acts naturally on Leech vectors. The orbits of $\Lambda/\Co_0$ were described in ATLAS \cite[Page 181]{ATLAS} for Leech vectors of type $x\leq 16$. There are in total $44$ orbits of type $x\leq 16$ including the orbit of type $0$. Among the $44$ orbits, the orbits of the same type have distinct numbers of elements, and their numbers of elements are all divisible by $p=65520$. Let $O_{x,y p}$ stand for an orbit of type $x$ whose number of elements is $y p$. If there are multiple orbits of the same type, then we mark them as $O_{xa,y_1p}$, $O_{xb,y_2p}$, etc. in the order of the ATLAS.   We formulate the $43$ nonzero orbits as follows 
\begin{align*}
&O_{2,3p}& &O_{3,256p}& &O_{4,6075p}& &O_{5,70656p}& &O_{6a,518400p}&\\ &O_{6b,6900p}& &O_{7,2861568p}& &O_{8a,3p}&
&O_{8b,12295800p}& &O_{8c,141312p}& \\
&O_{9a,12441600p}& &O_{9b,32972800p}& &O_{10a,143078400p}& &O_{10b,279450p}&
&O_{10c,1430784p}&\\
&O_{11a,19430400p}& &O_{11b,393465600p}&
&O_{12a,256p}& &O_{12b,141312p}&  &O_{12c,12441600p}&\\ 
&O_{12d,2049300p}& &O_{12e,393465600p}& &O_{12f,667699200p}& &O_{13a,12441600 p}& &O_{13b,1007271936 p}&\\
&O_{13c,1573862400 p}& &O_{14a,286156800 p}& &O_ {14b,5508518400 p}&  &O_{14c,13800 p}& &O_{14d,19430400 p}&\\ 
&O_{14e,49183200 p}& &O_{15a,2861568 p}& &O_{15b,1335398400 p}& &O_{15c,19430400 p}& &O_{15d,8012390400 p}&\\
&O_{15e,3147724800 p}& &O_{16a,6075 p}& &O_{16b,12441600 p}& &O_{16c,286156800 p}& &O_{16d,98366400 p}&\\
&O_{16e,5901984000 p}& &O_{16f,16024780800 p}& &O_{16g,3147724800 p}&   \end{align*}
Since the numbers $y$ appearing in the above orbits are very large, for the sake of simplicity, we omit the subscript $yp$ of the orbit. We also use the same notation $O_{x}$ to denote the corresponding Conway orbit
$\sum_{v\in O_{x}} e^{2\pi i(v, \mathfrak{z})}$. 
There are $3$ non-primitive orbits of type $x \leq 16$, that is, $O_{8a}=\orb(2v_2)$, $O_{12a}=\orb(2v_3)$ and $O_{16a}=\orb(2v_4)$, where $v_x$ is a Leech vector of type $x$. For future use, we denote by $O_{18a}=\orb(3v_2)$ the non-primitive Conway orbit of type $18$.

To describe the representative system of $\Lambda / t\Lambda$, we introduce the following notions. 
\begin{definition}\label{def:weights}
For $v,u\in \Lambda$, we say that $v$ is conjugate to $u$ modulo $t\Lambda$ if $v-u\in t\Lambda$, noted by $v \sim_t u$. The $t$-weight of a Leech vector $v$ is defined as the number
$$
\#\{ u\in \Lambda : u \in t\Lambda + v, \quad (u,u)=(v,v) \}.
$$
The $t$-weight of an orbit $\Co_0 v$ is defined as the number 
$$
\#\{ u\in \Co_0 v: u \in t\Lambda + v\}.
$$
\end{definition}
We warn that the $t$-weight of an orbit may be different from the $t$-weight of a vector in this orbit (see Theorem \ref{th:system4}). 
The following lemma was proved in \cite{Con69a}.
\begin{lemma}
Representatives of $\Lambda/2\Lambda$ of minimal norm may be found among vectors of types up to $4$, according to the weighted equality (here the $2$-weights of orbits and vectors coincide)
$$
1+\frac{|O_2|}{2} + \frac{|O_3|}{2} + \frac{|O_4|}{48} = 2^{24}.
$$
\end{lemma}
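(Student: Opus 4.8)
The plan is to partition $\Lambda/2\Lambda$, which has exactly $2^{\rk\Lambda}=2^{24}$ elements, according to the type of a minimal-norm representative of each coset, and to count the cosets of each minimal type. The whole argument rests on one elementary estimate, which I prove first. Suppose $v,w\in\Lambda$ satisfy $v\sim_2 w$ but $w\neq\pm v$. Then $v+w$ and $v-w$ both lie in $2\Lambda$ and are nonzero, so $\tfrac12(v+w)$ and $\tfrac12(v-w)$ are nonzero Leech vectors. Since $\Lambda$ has no roots, each has norm at least $4$, whence $(v+w,v+w)\geq 16$ and $(v-w,v-w)\geq 16$, and adding these gives
$$
(v,v)+(w,w)\geq 16.
$$
I will call this the norm-sum bound.

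From the norm-sum bound I obtain both the $2$-weights and the fact that the minimal type of a coset is unambiguous. If $v$ has type $2$ (norm $4$), any congruent $w\neq\pm v$ has $(w,w)\geq 12$; hence the only norm-$4$ vectors in $v+2\Lambda$ are $\pm v$, so the $2$-weight is $2$, and the coset contains no vector of type $3$ or $4$. If $v$ has type $3$ (norm $6$), any congruent $w\neq\pm v$ has $(w,w)\geq 10$, so again the minimal vectors are exactly $\pm v$; the coset contains no type-$4$ vector, and no type-$2$ vector either since a norm-$4$ and a norm-$6$ vector cannot be congruent ($4+6<16$). Consequently every type-$2$ vector and every type-$3$ vector is the minimal representative of its coset, and these two families of cosets are disjoint from each other and from $2\Lambda$. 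This yields one coset $2\Lambda$ of type $0$, exactly $|O_2|/2$ cosets of minimal type $2$, and $|O_3|/2$ of minimal type $3$. Since the two vectors $\pm v$ lie in the same $\Co_0$-orbit, the $2$-weight of the orbit equals that of the vector in these cases.

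The main obstacle is the type-$4$ contribution, i.e. the claim that each coset of minimal type $4$ contains exactly $48$ vectors of norm $8$. The norm-sum bound does the local work: if $v,w$ are norm-$8$ vectors with $v\sim_2 w$ and $w\neq\pm v$, then $(v,v)+(w,w)=16$ is extremal, forcing $\tfrac12(v\pm w)$ to have norm exactly $4$ and hence $(v,w)=0$. Thus the norm-$8$ vectors of such a coset form an orthogonal system $\{\pm f_1,\dots,\pm f_N\}$ with $N\leq 24$, and no vector of type $2$ or $3$ can share the coset (again $4+8<16$ and $6+8<16$). What remains, and this is the genuinely hard lattice-specific input, is that $N=24$, i.e. that the minimal vectors always complete to a full orthogonal frame (a ``cross'') of $48$ vectors. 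I would establish this using the explicit frame structure of $\Lambda$ together with the transitivity of $\Co_0$ on type-$4$ vectors, which forces the $2$-weight to be constant across the single orbit $O_4$ (so the orbit and vector $2$-weights again coincide), exactly as in Conway's original analysis.

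Finally I assemble the count. The covering radius of $\Lambda$ equals $\sqrt2$, so for any $v$ the point $v/2$ lies within squared distance $2$ of $\Lambda$; translating by $2\Lambda$ gives $\min_{w\in v+2\Lambda}(w,w)\leq 8$, so every coset has minimal type at most $4$. Hence $\Lambda/2\Lambda$ is partitioned into cosets of minimal types $0,2,3,4$, and the $2$-weights $1,2,2,48$ give
$$
1+\frac{|O_2|}{2}+\frac{|O_3|}{2}+\frac{|O_4|}{48}=2^{24},
$$
which is the asserted weighted equality. I note that, once minimal type $\leq 4$ is known, the identity can also be read in reverse: writing the unknown type-$4$ $2$-weight as the constant $2N$, the equation $1+|O_2|/2+|O_3|/2+|O_4|/(2N)=2^{24}$ already pins down $2N=48$ and hence $N=24$, so the frame count and the covering-radius bound are the two deep inputs and either one closes the argument given the other.
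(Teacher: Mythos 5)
Your argument is essentially correct, but note that the paper offers no proof of this lemma at all: it simply cites Conway's 1969 paper, so the comparison is really with the classical argument. Your norm-sum bound $(v,v)+(w,w)\geq 16$ is exactly the right local tool, and your analysis of the type-$2$, type-$3$ and type-$4$ cosets is sound. Where you can do substantially better is at the end: you treat the covering-radius theorem (minimal type $\leq 4$) and the frame-completeness claim ($N=24$) as two ``deep inputs'' of which you need at least one, but in fact your own bound $N\leq 24$ already closes the argument by a one-line double count. Since every type-$4$ vector lies in a coset of minimal type $4$ and each such coset contains at most $48$ of them, the number of cosets of minimal type $4$ is at least $|O_4|/48$; together with the exact counts $1$, $|O_2|/2$, $|O_3|/2$ for the smaller types this gives
$$
2^{24}=|\Lambda/2\Lambda|\;\geq\;1+\frac{|O_2|}{2}+\frac{|O_3|}{2}+\frac{|O_4|}{48},
$$
and the right-hand side numerically \emph{equals} $2^{24}$, so equality must hold throughout. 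Equality simultaneously forces every coset to have minimal type at most $4$ and every minimal-type-$4$ coset to contain exactly $48$ minimal vectors. This is Conway's actual argument; it requires neither the covering radius of $\Lambda$ (a much harder theorem, proved thirteen years later by Conway--Parker--Sloane, so invoking it here is anachronistic even if not circular) nor any explicit frame structure or appeal to transitivity of $\Co_0$ on $O_4$. I recommend replacing your last two paragraphs with this double-counting step; everything before it can stand as written.
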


The following lemma was proved in \cite[Theorem 4.1]{Mar02}. It can also be derived from \cite[Figure 2]{Bor85}.
\begin{lemma}
Representatives of $\Lambda/3\Lambda$ of minimal norm may be found among vectors of types up to $9$, according to the weighted equality (here the $3$-weights of orbits and vectors coincide)
$$
1+ |O_2| +  |O_3| + |O_4| + |O_5| + |O_{6a}| +  \frac{|O_{6b}|}{3} + \frac{|O_7|}{2} + \frac{|O_{8b}|}{9} + \frac{|O_{9b}|}{36} = 3^{24}.
$$
\end{lemma}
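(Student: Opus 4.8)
The equality $|\Lambda/3\Lambda| = 3^{24}$ is immediate from $\rk\Lambda = 24$, so the entire content of the statement is the claim that minimal representatives can be found among types $\leq 9$ together with the precise weighting. The plan is to organize all $3^{24}$ cosets according to the $\Co_0$-orbit of their minimal-norm representatives: first bound the types that can occur, then identify which orbits of each type actually contain minimal representatives and compute their $3$-weights, and finally read off the stated sum as a finite arithmetic check. The phrase that the $3$-weights of orbits and vectors coincide must be interpreted as the assertion that, for each contributing orbit, the set of minimal-norm vectors inside a fixed coset lies in a single $\Co_0$-orbit; I would isolate this as a claim to be verified alongside the weight count.

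First I would bound the types. For $v\in\Lambda$, minimizing $(v-3\lambda,v-3\lambda)$ over $\lambda\in\Lambda$ is the same as $9\,\mathrm{dist}(v/3,\Lambda)^2$. Since the covering radius of the Leech lattice equals $\sqrt{2}$ (Conway--Parker--Sloane), we have $\mathrm{dist}(v/3,\Lambda)^2\leq 2$, so every coset of $\Lambda/3\Lambda$ contains a vector of norm $\leq 18$, i.e.\ of type $\leq 9$, with the extreme value $18$ attained exactly at thrice the deep holes of $\Lambda$. Thus minimal representatives are to be sought among the orbits of types $0,2,3,\dots,9$ listed in \S\ref{subsec:orbits}. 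Next I would eliminate the non-minimal orbits of these types. The cleanest cases are the non-primitive ones: $O_{8a}=\orb(2v_2)$ is excluded because $2v_2\equiv -v_2\pmod{3\Lambda}$ and $-v_2$ has norm $4<16$; the remaining excluded orbits $O_{8c}$ and $O_{9a}$ are shown, by an explicit congruence, to be congruent modulo $3\Lambda$ to strictly shorter vectors. For each surviving orbit $\Co_0 v$ I would compute $w_3=\#\{u\in\Co_0 v: u\equiv v\pmod{3\Lambda}\}$; by transitivity of $\Co_0$ on $\Co_0 v$ and equivariance of the reduction map $\Lambda\to\Lambda/3\Lambda$, the orbit $\Co_0 v$ surjects onto a single $\Co_0$-orbit of cosets with all fibres of size $w_3$, so it accounts for exactly $|\Co_0 v|/w_3$ cosets.

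It then remains to check that these orbit images are pairwise disjoint and exhaust the nonzero cosets, after which the weighted equality reduces to verifying $1+|O_2|+|O_3|+|O_4|+|O_5|+|O_{6a}|+\tfrac{1}{3}|O_{6b}|+\tfrac{1}{2}|O_7|+\tfrac{1}{9}|O_{8b}|+\tfrac{1}{36}|O_{9b}|=3^{24}$, a finite computation using the ATLAS cardinalities and $p=65520$. The main obstacle will be the weight determination, in particular $w_3(O_{8b})=9$ and $w_3(O_{9b})=36$, together with the single-orbit claim for minimal representatives; this is precisely the point where the fine geometry of $\Lambda$ modulo $3\Lambda$ enters, and for type $9$ it is governed by the deep-hole/Niemeier correspondence. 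Here I would follow the classification behind Figure~2 of \cite{Bor85}, or equivalently Martinet's argument in \cite{Mar02}, rather than attempt a direct enumeration of the $3^{24}$ cosets.
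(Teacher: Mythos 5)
Your proposal is correct, and it is worth noting that the paper itself offers no argument for this lemma beyond the citation ``proved in [Mar02, Theorem 4.1]; can also be derived from [Bor85, Figure 2]'' together with the adjacent remark that $O_{8a}$, $O_{8c}$, $O_{9a}$ are conjugate to $O_2$, $O_5$, $O_{6a}$ modulo $3\Lambda$ by [Mar02, Lemma 4.3]. Since you too ultimately defer the hard combinatorial inputs (the conjugacy exclusions and the $3$-weights $3,2,9,36$ of $O_{6b},O_7,O_{8b},O_{9b}$) to the same sources, your route is essentially the paper's, but you add two genuinely useful pieces of scaffolding that the paper leaves implicit: the covering-radius bound $\min_{\lambda}(v-3\lambda,v-3\lambda)=9\,\mathrm{dist}(v/3,\Lambda)^2\leq 18$, which cleanly explains why only types up to $9$ can occur, and the equivariance/constant-fibre argument showing that an orbit $\Co_0 v$ contributes exactly $|\Co_0 v|/w_3$ cosets, which justifies reading the weighted sum as a partition of $\Lambda/3\Lambda$. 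The one point you should not gloss over is your phrase ``shown, by an explicit congruence'': the exclusion of $O_{8c}$ and $O_{9a}$ (unlike the trivial $2v_2\equiv -v_2$ for $O_{8a}$) genuinely requires exhibiting such congruences or invoking [Mar02, Lemma 4.3], and likewise the disjointness of the ten orbit images modulo $3\Lambda$ is not automatic from the norm count alone --- the final equality only certifies exhaustion once disjointness and the individual weights are already known. As written, your proposal is a faithful and slightly more self-contained reorganization of the same proof.
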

We know from \cite[Lemma 4.3]{Mar02} that $O_{8a}$, $O_{8c}$ and $O_{9a}$ are respectively conjugate to $O_2$, $O_5$ and $O_{6a}$ modulo $3\Lambda$.  It is easy to derive the following result from the lemmas above. 
\begin{lemma}\label{lem:data}
\noindent
\begin{enumerate}
    \item The rank of $J_{*,\Lambda,2}^{\w,\Co_0}$ over $M_*(\SL_2(\ZZ))$ is $4$. The basic Conway orbits of index $2$ are $O_0$, $O_2$, $O_3$, $O_4$. The number $\delta_2$ defined in Lemma \ref{lem:weak-holo} is $5$.
    \item The rank of $J_{*,\Lambda,3}^{\w,\Co_0}$ over $M_*(\SL_2(\ZZ))$ is $10$. The basic Conway orbits of index $3$ are $O_0$, $O_2$, $O_3$, $O_4$, $O_5$, $O_{6a}$, $O_{6b}$, $O_7$, $O_{8b}$, $O_{9b}$. The number $\delta_3$ defined in Lemma \ref{lem:weak-holo} is $19$.
\end{enumerate}
\end{lemma}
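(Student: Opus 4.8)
The plan is to deduce both ranks directly from Lemma~\ref{lem:weak-free}, which identifies the rank of $J_{*,\Lambda,t}^{\w,\Co_0}$ with $s(t)=|(\mathcal{S}_t/\Co_0)/t\Lambda|$, and then to read off the basic Conway orbits and evaluate $\delta_t$ from the formula in Lemma~\ref{lem:weak-holo}. The only genuine content is to pin down which Conway orbits contain minimal-norm coset representatives, and to check that the orbits so obtained remain pairwise distinct modulo $t\Lambda$; once this is done, computing $\delta_t$ is bookkeeping with the ceiling function $\epsilon$. A preliminary observation I would use throughout is that $\mathcal{S}_t$ is $\Co_0$-invariant (if $v$ is minimal in $v+t\Lambda$ then so is $\sigma(v)$ in $\sigma(v)+t\Lambda$), so each orbit $O_x$ is either entirely contained in $\mathcal{S}_t$ or disjoint from it.

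First I would treat $t=2$. The weighted equality of Conway \cite{Con69a} shows that minimal-norm representatives of $\Lambda/2\Lambda$ occur only in types $0,2,3,4$, each of which is a single $\Co_0$-orbit, namely $O_0,O_2,O_3,O_4$; by the invariance observation these four orbits are exactly $\mathcal{S}_2/\Co_0$. Two orbits conjugate modulo $2\Lambda$ must share the same type, and here the four types are distinct, so no further identification occurs. Hence $s(2)=4$ and the basic Conway orbits of index $2$ are $O_0,O_2,O_3,O_4$.

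For $t=3$ I would argue the same way using the weighted equality of Martinet \cite{Mar02}, whose minimal-norm representatives lie in the orbits $O_0,O_2,O_3,O_4,O_5,O_{6a},O_{6b},O_7,O_{8b},O_{9b}$. The point requiring care is that types $8$ and $9$ carry several $\Co_0$-orbits in the ATLAS: by \cite[Lemma 4.3]{Mar02} the orbits $O_{8a},O_{8c},O_{9a}$ are conjugate modulo $3\Lambda$ to $O_2,O_5,O_{6a}$ respectively, so their cosets already contain strictly shorter vectors and they are excluded from $\mathcal{S}_3$. The ten surviving orbits have pairwise distinct types except for $O_{6a}$ and $O_{6b}$, both of type $6$; the clause in Martinet's lemma that the $3$-weights of orbits and vectors coincide says precisely that every coset has all of its minimal vectors in a single $\Co_0$-orbit, which rules out $O_{6a}\sim_3 O_{6b}$ and any other same-type coincidence. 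Hence $(\mathcal{S}_3/\Co_0)/3\Lambda$ has ten elements and $s(3)=10$.

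Finally I would read off $\delta_t$ from Lemma~\ref{lem:weak-holo}, evaluating $\epsilon((v,v)/2t)$ on each representative, which equals $\epsilon(\text{type}/t)$. For $t=2$ the types $0,2,3,4$ give $\epsilon(0)+\epsilon(1)+\epsilon(3/2)+\epsilon(2)=0+1+2+2=5$, so $\delta_2=5$; for $t=3$ the types $0,2,3,4,5,6,6,7,8,9$ give $0+1+1+2+2+2+2+3+3+3=19$, so $\delta_3=19$. The only step that is not a direct citation or arithmetic is the index-$3$ verification that the ten listed orbits remain pairwise inequivalent modulo $3\Lambda$; this is the main obstacle, and it is resolved by the conjugacy relations $O_{8a}\sim_3 O_2$, $O_{8c}\sim_3 O_5$, $O_{9a}\sim_3 O_{6a}$ that remove the spurious higher-type orbits, together with the coincidence of orbit- and vector-weights recorded in Martinet's enumeration.
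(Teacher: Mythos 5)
Your proposal is correct and follows essentially the same route as the paper, which simply derives the lemma from Conway's and Martinet's weighted equalities together with the conjugacies $O_{8a}\sim_3 O_2$, $O_{8c}\sim_3 O_5$, $O_{9a}\sim_3 O_{6a}$; you have merely made explicit the bookkeeping (the $\Co_0$-invariance of $\mathcal{S}_t$, the non-conjugacy of the listed orbits via the coincidence of orbit- and vector-weights, and the evaluation of $\epsilon(\mathrm{type}/t)$) that the paper leaves to the reader. The arithmetic for $\delta_2=5$ and $\delta_3=19$ checks out.
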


\subsection{Conway invariant holomorphic Jacobi forms} We construct some Conway invariant holomorphic Jacobi forms of low weights. As in \cite{Sak17} we start with the Jacobi theta function of $\Lambda$. As we mentioned in the introduction, the Jacobi theta function $\Theta_\Lambda(\tau,\mathfrak{z})$ defined in \eqref{eq:theta} is a Conway invariant holomorphic Jacobi form of weight $12$ and index $1$. By applying the Hecke operators introduced in Lemma \ref{lem:index} to $\Theta_\Lambda(\tau,\mathfrak{z})$, we can construct Conway invariant holomorphic Jacobi forms of weight $12$ and arbitrary index. More precisely, for any $t\geq 2$ we have 
\begin{equation}
\begin{aligned}
A_t(\tau, \mathfrak{z}):=&\,\frac{1}{\sigma_{11}(t)}(\Theta_\Lambda | T_{-}(t))(\tau,\mathfrak{z}) \\
=&\, 1 + \frac{1}{\sigma_{11}(t)}\sum_{\substack{\ell \in \Lambda, (\ell,\ell)=2t}} q\cdot \zeta^\ell + O(q^2) \in J_{12,\Lambda,t}^{\Co_0}.
\end{aligned}
\end{equation}
It is easy to see from Lemma \ref{lem:index} that every $\sigma_{11}(t)A_t$ has integral Fourier coefficients. 
For convenience, we will use $A_1$ instead of $\Theta_\Lambda$ from now on. The reduction of $A_t$ is given by
\begin{equation}
A_t(\tau,0)= E_{12}(\tau) - \frac{65520}{691}\frac{\tau(t)}{\sigma_{11}(t)} \Delta(\tau).     
\end{equation}
Using the data of Conway orbits of type $x\leq 16$, we can calculate the $q^d$-terms of the Fourier expansion of $A_t$ for $d\leq 16/t$. The forms $A_2$ and $A_3$ will be used later and we present their Fourier expansions up to $q^5$-terms
\begin{equation}
\begin{aligned}
A_2=\,&1+\frac{1}{2049}\Big(O_{2} q+O_{4} q^2+(O_{6 a}+O_{6 b}) q^3+(2049 O_{8 a}+O_{8 b}+O_{8 c}) q^4\\
&+(O_{10 a}+O_{10 b}+O_{10 c}) q^5\Big)+O(q^6),\\
A_3=\,&1+\frac{1}{177148}\Big(O_{3} q+(O_{6 a}+O_{6 b}) q^2+(O_{9 a}+O_{9 b}) q^3+(O_{12 a}+O_{12 b}+O_{12 c}+O_{12 d}\\
&+O_{12 e}+O_{12 f}) q^4+(O_{15 a}+O_{15 b}+O_{15 c}+O_{15 d}+O_{15 e}) q^5\Big)+O(q^6).
\end{aligned}    
\end{equation}

Since $A_t$ has singular weight, the differential operators introduced in Lemma \ref{lem:diffoperator} acting on them give zero. In order to construct Conway invariant holomorphic Jacobi forms of weight $14$ and index $t\geq 2$, we use the trick in \cite{Sak17, Wan21a}. We notice that $A_1(t\tau,t\mathfrak{z})$ is a Conway invariant holomorphic Jacobi form of weight $12$ and index $t$ with respect to the congruence subgroup $\Gamma_0(t)$ of $\SL_2(\ZZ)$. Given a modular form $f_{k}(\tau)$ of weight $k$ on $\Gamma_0(t)$, then $f_k(\tau)A_1(t\tau,t\mathfrak{z})$ defines a Conway invariant holomorphic Jacobi form of weight $12+k$ and index $t$ for $\Gamma_0(t)$. The trace operation
\begin{equation}
\mathrm{Tr}_{\SL_2(\ZZ)}(f_k(\tau)A_1(t\tau,t\mathfrak{z}))=\sum_{\gamma\in \Gamma_0(t)\backslash \SL_2(\ZZ)}(f_k(\tau)A_1(t\tau,t\mathfrak{z}))|_{12+k,t}\gamma
\end{equation}
gives a Conway invariant holomorphic Jacobi form of weight $12+k$ and index $t$, where $\lvert_{k,t}\gamma$ is the slash action of $\gamma\in \SL_2(\ZZ)$ defined via
$$ 
(\phi \lvert_{k,t}\gamma)(\tau,\mathfrak{z}):=(c\tau + d)^{-k} \exp\left(- t\pi i \frac{c(\mathfrak{z},\mathfrak{z})}{c \tau + d}\right) \phi \left( \frac{a\tau +b}{c\tau + d},\frac{\mathfrak{z}}{c\tau + d} \right). 
$$
It is well-known that $f_2(\tau):=(tE_2(t\tau)-E_2(\tau))/(t-1) \in M_2(\Gamma_0(t))$. We apply the trace operator to $f_2(\tau)A_1(t\tau,t\mathfrak{z})$ and denote the normalized image by
\begin{equation}
B_t(\tau, \mathfrak{z}) =  1 + O(q) \in J_{14,\Lambda, t}^{\Co_0}, \quad t\geq 2,   
\end{equation}
whose reduction is
\begin{equation}
B_t(\tau,0) = E_{14}(\tau)=E_4(\tau)^2E_6(\tau).     
\end{equation}
More precisely, for prime $t$ we have
\begin{equation}\label{Bt}
B_t(\tau, \mathfrak{z})=\frac{t^{12}}{t^{12}-1}\Big( f_2(\tau)A_1(t\tau,t\mathfrak{z})-\frac{1}{t^{13}}\sum_{j=0}^{t-1}f_2\Big(\frac{\tau+j}{t} \Big)A_1\Big( \frac{\tau+j}{t},\mathfrak{z} \Big) \Big).
\end{equation}
The Fourier expansions of $B_2$ and $B_3$ are as follows
\begin{equation}
\begin{aligned}
B_2=\,&1+\frac{1}{4095}\Big((98280-O_{2}) q+(98280-24 O_{2}-24 O_{3}-O_{4}) q^2+(393120-24 O_{2}\\
&-96 O_{3}-24 O_{4}-24 O_{5}-O_{6 a}-O_{6 b}) q^3+(98280-96 O_{2}-144 O_{3}-24 O_{4}\\
&-96 O_{5}-24 O_{6 a}-24 O_{6 b}-24 O_{7}+4095 O_{8 a}-O_{8 b}-O_{8 c}) q^4+(589680-24 O_{2}\\
&-192 O_{3}-96 O_{4}-144 O_{5}-24 O_{6 a}-24 O_{6 b}-96 O_{7}+98280 O_{8 a}-24 O_{8 b}\\[-1mm]
&-24 O_{8 c}-24 O_{9 a}-24 O_{9 b}-O_{10 a}-O_{10 b}-O_{10 c}) q^5\Big) +O(q^6).
\end{aligned}    
\end{equation}

\begin{equation}
\begin{aligned}
B_3=\,& 1+\frac{1}{531440}\Big( (6377280-12 O_{2}-O_{3}) q+ (19131840-84 O_{2}-12 O_{3}-36 O_{4}-12 O_{5}\\
&-O_{6 a}-O_{6 b})q^2+ (6377280-96 O_{2}-36 O_{3}-72 O_{4}-84 O_{5}-12 O_{6 a}-12 O_{6 b}\\
&-36 O_{7}-12 O_{8 a}-12 O_{8 b}-12 O_{8 c}-O_{9 a}-O_{9 b})q^3+ (44640960-216 O_{2}-12 O_{3}\\
&-180 O_{4}-96 O_{5}-36 O_{6 a}-36 O_{6 b}-72 O_{7}-84 O_{8 a}-84 O_{8 b}-84 O_{8 c}-12 O_{9 a}\\
&-12 O_{9 b}-36 O_{10 a}-36 O_{10 b}-36 O_{10 c}-12 O_{11 a}-12 O_{11 b}-O_{12 a}-O_{12 b}-O_{12 c}\\
&-O_{12 d}-O_{12 e}-O_{12 f})q^4+ (38263680-168 O_{2}-84 O_{3}-144 O_{4}-216 O_{5}-12 O_{6 a}\\
&-12 O_{6 b}-180 O_{7}-96 O_{8 a}-96 O_{8 b}-96 O_{8 c}-36 O_{9 a}-36 O_{9 b}-72 O_{10 a}-72 O_{10 b}\\
&-72 O_{10 c}-84 O_{11 a}-84 O_{11 b}-12 O_{12 a}-12 O_{12 b}-12 O_{12 c}-12 O_{12 d}-12 O_{12 e}\\
&-12 O_{12 f}-36 O_{13 a}-36 O_{13 b}-36 O_{13 c}-12 O_{14 a}-12 O_{14 b}-12 O_{14 c}-12 O_{14 d}\\[-1mm]
&-12 O_{14 e}-O_{15 a}-O_{15 b}-O_{15 c}-O_{15 d}-O_{15 e})q^5\Big)+O(q^6).
\end{aligned}    
\end{equation}

\section{Jacobi forms of singular weight and the fake monster Lie algebra}\label{sec:phi12}
In 1990 Borcherds \cite{Bor90} constructed a celebrated generalized Kac--Moody algebra whose root lattice is $\mathrm{II}_{25,1}$, that is, the unique even unimodular lattice of signature $(25,1)$. This infinite-dimensional Lie algebra describes the physical states of a bosonic string moving on the $\mathbb{Z}_2$ orbifold of the torus $\RR^{25,1}/\mathrm{II}_{25,1}$ (see the string theory background in \cite{dixon1988beauty}), and is called the \textit{fake monster Lie algebra}.  Borcherds proved that the denominator identity of this algebra defines an automorphic form of weight $12$ on a symmetric domain of type IV and dimension $26$. This form is exceptional, because Scheithauer \cite{Sch17} proved that it is the unique holomorphic Borcherds product of singular weight on unimodular lattices. In this section we construct Conway invariant holomorphic Jacobi forms of singular weight and calculate their Fourier expansions using this remarkable identity. 

We first review the modularity of this denominator identity. 
Let $U$ be an even unimodular lattice of signature $(1,1)$. Then $U\oplus \Lambda$ gives a model of $\mathrm{II}_{25,1}$. We further define $\mathrm{II}_{26,2}=2U\oplus \Lambda$. The symmetric domain $\cD(\mathrm{II}_{26,2})$ of type IV attached to $\mathrm{II}_{26,2}$ is one of the two conjugate connected components of the space
$$
\{ [\mathcal{Z}] \in \PP(\mathrm{II}_{26,2}\otimes\CC): (\mathcal{Z},\mathcal{Z})=0, (\mathcal{Z},\overline{\mathcal{Z}})<0 \}.
$$
Let $\Orth^+(\mathrm{II}_{26,2})$ denote the orthogonal group which preserves $\mathrm{II}_{26,2}$ and $\cD(\mathrm{II}_{26,2})$. We define the affine cone over $\cD(\mathrm{II}_{26,2})$ as
$$
\cA(\mathrm{II}_{26,2}) = \{ \mathcal{Z} \in \mathrm{II}_{26,2}\otimes\CC : [\mathcal{Z}] \in \cD(\mathrm{II}_{26,2}) \}.
$$
In 1995 Borcherds constructed a holomorphic function $\Phi_{12}: \cA(\mathrm{II}_{26,2}) \to \CC$ which satisfies
\begin{align*}
\Phi_{12}(t\mathcal{Z}) &= t^{-12}\Phi_{12}(\mathcal{Z}), \quad \text{for all $t\in \CC^\times$},\\
\Phi_{12}(g(\mathcal{Z})) &= \det(g) \Phi_{12}(\mathcal{Z}), \quad \text{for all $g\in \Orth^+(\mathrm{II}_{26,2})$,}
\end{align*}
and vanishes precisely with multiplicity one on hyperplanes orthogonal to roots of $\mathrm{II}_{26,2}$. 
In other words, $\Phi_{12}$ is a modular form of minimal weight $12$ and determinant character for $\Orth^+(\mathrm{II}_{26,2})$. This function has an infinite product expansion. The modular variety $\cD(\mathrm{II}_{26,2})/\Orth^+(\mathrm{II}_{26,2})$ has a unique $0$-dimensional cusp and twenty-four $1$-dimensional cusps which correspond to the twenty-four Niemeier lattices. 
We fix a basis $U=\ZZ e+\ZZ f$ with $e^2=f^2=0$ and $(e,f)=-1$.
At the $1$-dimensional cusp corresponding to the Leech lattice we realize $\cD(\mathrm{II}_{26,2})$ as the tube domain
$$
\cH(\Lambda) = \{ Z=-\omega e+ \mathfrak{z} - \tau f : \tau, \omega \in \HH, \; \mathfrak{z}\in \Lambda\otimes\CC: (\mathrm{Im}(Z),\mathrm{Im}(Z))<0 \}.
$$
We write $\alpha=ne+\ell+mf \in U\oplus \Lambda$ as $(n,\ell,m)$. Note that $\alpha^2=-2nm+(\ell,\ell)$ and $(\alpha,Z)=n\tau+m\omega + (\ell,\mathfrak{z})$.
Borcherds \cite{Bor90, Bor95} proved that $\Phi_{12}$ are represented on $\cH(\Lambda)$ by
\begin{equation}\label{eq:denominator}
\begin{aligned}
\Phi_{12}(Z)&=e^{2\pi i (\rho, Z)}\prod_{\alpha\in\Delta_{+}}\left( 1 - e^{2\pi i (\alpha,Z)} \right)^{p_{24}(1-(\alpha,\alpha)/2)}\\
&=\sum_{\sigma\in W} \det(\sigma) \sum_{n=1}^\infty \tau(n) e^{2\pi i (\sigma(n\rho),Z)},
\end{aligned}    
\end{equation}
where $\rho=(1,0,0)$ is the Weyl vector, $W$ is the reflection group of $U\oplus \Lambda$, $\Delta_{+}$ is the set of positive roots of the fake monster Lie algebra which are either positive multiples of $\rho$ or vectors $\alpha$ in $U\oplus \Lambda$ satisfying $(\alpha,\alpha)\leq 2$ and $(\alpha,\rho)<0$, $\tau(n)$ is the Ramanujan tau function in \eqref{eq:Delta}, and $p_{24}(n)$ is defined by 
$$
\Delta(\tau)^{-1}=\sum_{n=0}^\infty p_{24}(n)q^{n-1}. 
$$
This is the so-called \textit{Weyl--Kac--Borcherds denominator identity} of the fake monster Lie algebra. In terms of Conway invariant Jacobi forms, one can express $\Phi_{12}$ as follows (see \cite{Bor95, Gri18})
\begin{equation}\label{eq:FJ12}
\begin{aligned}
\Phi_{12}(Z) &= \Delta(\tau)\cdot \exp\Big(-\sum_{m=1}^\infty ((\Delta^{-1}A_1)| T_{-}(m))(\tau,\mathfrak{z})e^{2\pi i m\omega}  \Big) \\
&= \sum_{m=0}^\infty \Phi_{12,m}(\tau,\mathfrak{z}) e^{2\pi i m\omega},
\end{aligned}
\end{equation}
where $T_{-}(m)$ are Hecke operators introduced in Lemma \ref{lem:index}. 

\begin{theorem}\label{th:phi}
For any $m\in \NN$, the function $\Phi_{12,m}$ is a Conway invariant holomorphic Jacobi form of weight $12$ and index $m$. Moreover, its Fourier expansion takes the form
$$
\Phi_{12,m}(\tau,\mathfrak{z}) = \sum_{n=0}^\infty \sum_{\substack{\ell \in \Lambda\\ (\ell,\ell)=2nm}} c_m(n,\ell) q^n \zeta^\ell
$$
and satisfies the following properties.
\begin{enumerate}
    \item For any $n,m\in \NN$ and $\ell \in \Lambda$ we have
    $$
    c_m(n,\ell)= - c_n(m,\ell).
    $$
    In particular, $c_m(m,\ell)=0$ for any $m\in\NN$ and $\ell \in \Lambda$.
    \item For any $n,m\in\NN$ and $\ell\in \Lambda$, we have $c_m(n,\ell)\in \ZZ$. Let $d$ be the largest positive integer such that $n/d, m/d \in \NN$ and $d^{-1}\ell \in \Lambda$. Then we have
    $$
    c_m(n,\ell) \in \{ \tau(d), -\tau(d), 0 \}.
    $$
\end{enumerate}
\end{theorem}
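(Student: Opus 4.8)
The plan is to extract all three assertions from the two descriptions of $\Phi_{12}$ assembled in this section: the Fourier--Jacobi expansion \eqref{eq:FJ12} and the Weyl--Kac--Borcherds denominator identity \eqref{eq:denominator}. For the first assertion I would argue as follows. Since $\Phi_{12}$ is a holomorphic modular form of weight $12$ for $\Orth^+(\mathrm{II}_{26,2})$, the standard theory of Fourier--Jacobi expansions (as used in \cite{Bor95,Gri18}) shows that each coefficient $\Phi_{12,m}$ in \eqref{eq:FJ12} is a holomorphic Jacobi form of weight $12$ and index $m$ for $\Lambda$. Conway invariance follows because every $\sigma\in\Co_0=\operatorname{Aut}(\Lambda)$ extends to an isometry of $\mathrm{II}_{26,2}=2U\oplus\Lambda$ acting trivially on $2U$; as $\Co_0\subset\mathrm{SO}(\Lambda)$ this extension has determinant $1$, so the determinant character is trivial on $\Co_0$ and each $\Phi_{12,m}$ is genuinely $\Co_0$-invariant. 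Finally, weight $12$ is the singular weight, so Lemma~\ref{Lem:holomorphic} forces the support $(\ell,\ell)=2nm$, which is the stated shape of the expansion.

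For Property~(1) I would work with the coefficient function $c$ on $\mathrm{II}_{25,1}=U\oplus\Lambda$ defined by $\Phi_{12}(Z)=\sum_{\alpha}c(\alpha)e^{2\pi i(\alpha,Z)}$, so that $c\bigl((n,\ell,m)\bigr)=c_m(n,\ell)$. Reindexing the sum in \eqref{eq:denominator} by $\sigma\mapsto w\sigma$ shows the anti-invariance $c(w\alpha)=\det(w)\,c(\alpha)$ for every $w$ in the reflection group $W$. Now $e-f=(1,0,-1)$ has norm $2$, so the reflection $s$ in $e-f$ lies in $W$; a direct computation gives $s(e)=f$, $s(f)=e$ and $s|_\Lambda=\mathrm{id}$, hence $s\bigl((n,\ell,m)\bigr)=(m,\ell,n)$ and $\det(s)=-1$. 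Applying the anti-invariance to $s$ yields $c_n(m,\ell)=c\bigl((m,\ell,n)\bigr)=-c\bigl((n,\ell,m)\bigr)=-c_m(n,\ell)$. Setting $n=m$ gives $c_m(m,\ell)=-c_m(m,\ell)$, whence $c_m(m,\ell)=0$.

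For Property~(2), integrality is immediate from the product side of \eqref{eq:denominator}, whose exponents $p_{24}(1-(\alpha,\alpha)/2)$ are integers. For the sharper statement I would use the sum side, $c(\alpha)=\sum_{(\sigma,k):\,\sigma(k\rho)=\alpha}\det(\sigma)\,\tau(k)$. Because $\rho=(1,0,0)$ is a primitive isotropic vector and $\sigma$ is an isometry, the equation $\sigma(k\rho)=\alpha$ forces $(\alpha,\alpha)=0$ and, comparing contents, forces $k$ to equal the content $d$ of $\alpha$ (the largest $d$ with $d\mid n$, $d\mid m$, $d^{-1}\ell\in\Lambda$), together with $\sigma(\rho)=\alpha/d$. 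Hence $c(\alpha)=0$ unless $\alpha/d$ lies in the $W$-orbit of $\rho$, in which case the admissible $\sigma$ form one coset of the stabilizer $W_\rho$ and $c(\alpha)=\tau(d)\det(\sigma_1)\sum_{\sigma'\in W_\rho}\det(\sigma')$.

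The \textbf{main obstacle} is precisely to show $W_\rho=\{1\}$ (equivalently, that the monomials $e^{2\pi i(\sigma(k\rho),Z)}$ in \eqref{eq:denominator} are pairwise distinct, so that $c(\alpha)$ is a single term). A reflection in a root $r$ fixes $\rho$ iff $(r,\rho)=0$; writing $r=(n_r,\ell_r,m_r)$ one computes $(r,\rho)=-m_r$, so this needs $m_r=0$ and then $r^2=(\ell_r,\ell_r)=2$, forcing $\ell_r$ to be a root of $\Lambda$, which is impossible since the Leech lattice is rootless. Thus no wall of $W$ passes through the cusp $\rho$, and by the structure of the reflection group of $\mathrm{II}_{25,1}$ this makes $W_\rho$ trivial. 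Consequently $c(\alpha)=\det(\sigma_1)\tau(d)\in\{\tau(d),-\tau(d)\}$ when $\alpha/d\in W\rho$ and $c(\alpha)=0$ otherwise, which gives $c_m(n,\ell)\in\{\tau(d),-\tau(d),0\}$ and completes the proof. Everything else is bookkeeping; it is this collision-free nature of the denominator sum, resting on the rootlessness of $\Lambda$, that I expect to require the most care.
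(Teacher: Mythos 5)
Your proposal is correct and follows essentially the same route as the paper's proof: identify $\Phi_{12,m}$ as a Fourier--Jacobi coefficient and invoke the singular weight for the shape of the expansion, use the determinant character (realized by the reflection swapping $e$ and $f$, i.e.\ $\tau$ and $\omega$) for property (1), and read property (2) off the sum side of the denominator identity \eqref{eq:denominator}. The paper's proof disposes of property (2) with the single sentence ``follows from the denominator identity,'' so the one point you rightly flag as the main obstacle --- that the sum is collision-free because $W_\rho$ is trivial, which reduces via the stabilizer theorem for reflection groups to the rootlessness of $\Lambda$ --- is exactly the detail the paper leaves implicit, and you supply it correctly.
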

\begin{proof}
The function $\Phi_{12,m}$ is in fact the $m$-th Fourier--Jacobi coefficient of $\Phi_{12}$ at the Leech cusp, which follows that it is a Conway invariant holomorphic Jacobi form of weight $12$ and index $m$. Since $\Phi_{12,m}$ has singular weight, Lemma \ref{Lem:holomorphic} yields the claimed Fourier expansion. We notice that $\Phi_{12}$ has the determinant character. Thus $\Phi_{12}$ is anti-invariant under exchanging $\tau$ and $\omega$, which implies the property $(1)$. The property $(2)$ follows from the denominator identity \eqref{eq:denominator}. 
\end{proof} 

It is clear that $\Phi_{12,0}=\Delta$ and $\Phi_{12,1}=-A_1$.
We derive from the above theorem that $c_m(0,0)=-c_0(m,0)=-\tau(m)$ and $c_m(1,\ell)=-c_1(m,\ell)=1$. Thus when $m\geq 2$ the Fourier expansion of $\Phi_{12,m}$ starts with
\begin{equation}
\Phi_{12,m}(\tau,\mathfrak{z})=-\tau(m) + \sum_{\ell \in \Lambda, (\ell,\ell)=2m} q \cdot\zeta^\ell + O(q^2),   
\end{equation}
and the reduction of $\Phi_{12,m}$ is given by
\begin{equation}\label{eq:reduction of phi}
\Phi_{12,m}(\tau,0) = -\tau(m)E_{12}(\tau) + \frac{65520}{691}\sigma_{11}(m)\Delta(\tau). 
\end{equation}

By comparing reductions of $A_t$ and $\Phi_{12,t}$, we prove the following lemma.

\begin{lemma}
When $t\geq 2$, the forms $A_t$ and $\Phi_{12,t}$ are linearly independent. In particular, we have
$$
\dim J_{12,\Lambda,t}^{\Co_0} \geq 2, \quad \text{for $t\geq 2$}.
$$
\end{lemma}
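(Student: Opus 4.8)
The plan is to show that $A_t$ and $\Phi_{12,t}$ cannot be proportional by examining their reductions to $\mathfrak{z}=0$, which are classical modular forms of weight $12$ on $\SL_2(\ZZ)$. Both reductions are explicitly available: from the excerpt we have
\begin{equation*}
A_t(\tau,0)= E_{12}(\tau) - \frac{65520}{691}\frac{\tau(t)}{\sigma_{11}(t)} \Delta(\tau),
\qquad
\Phi_{12,t}(\tau,0) = -\tau(t)E_{12}(\tau) + \frac{65520}{691}\sigma_{11}(t)\Delta(\tau).
\end{equation*}
Since $M_{12}(\SL_2(\ZZ))=\CC E_{12}\oplus \CC\Delta$ is two-dimensional, linear independence of these two reductions is equivalent to the non-vanishing of the $2\times 2$ determinant formed by their coordinate vectors in the basis $(E_{12},\Delta)$.

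First I would write down that determinant explicitly. The coordinate vectors are $\bigl(1,\,-\tfrac{65520}{691}\tfrac{\tau(t)}{\sigma_{11}(t)}\bigr)$ for $A_t(\tau,0)$ and $\bigl(-\tau(t),\,\tfrac{65520}{691}\sigma_{11}(t)\bigr)$ for $\Phi_{12,t}(\tau,0)$. The determinant is
\begin{equation*}
\frac{65520}{691}\sigma_{11}(t) - \Bigl(-\tfrac{65520}{691}\tfrac{\tau(t)}{\sigma_{11}(t)}\Bigr)\bigl(-\tau(t)\bigr)
= \frac{65520}{691}\left( \sigma_{11}(t) - \frac{\tau(t)^2}{\sigma_{11}(t)} \right)
= \frac{65520}{691}\cdot \frac{\sigma_{11}(t)^2 - \tau(t)^2}{\sigma_{11}(t)}.
\end{equation*}
Thus the two reductions are linearly independent if and only if $\sigma_{11}(t)^2 \neq \tau(t)^2$, i.e. $\sigma_{11}(t) \neq \pm \tau(t)$. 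Since $\sigma_{11}(t)>0$ for all $t\geq 1$, it suffices to show $\sigma_{11}(t) \neq |\tau(t)|$ for every $t\geq 2$.

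The last step is the genuine content, and I expect it to be the main obstacle. The clean route is the Ramanujan bound $|\tau(t)| \leq \sigma_{11}(t)$, with equality characterized: since $\tau$ is multiplicative and $\sigma_{11}$ is multiplicative, one reduces to prime powers, and for a prime $p$ the Deligne bound $|\tau(p)| \leq 2p^{11/2} < 1 + p^{11} = \sigma_{11}(p)$ gives strict inequality already at every prime, hence strict inequality $|\tau(t)| < \sigma_{11}(t)$ for all $t\geq 2$ by multiplicativity and the positivity of each local factor. This forces the determinant to be nonzero, so $A_t$ and $\Phi_{12,t}$ have linearly independent reductions and are therefore themselves linearly independent as elements of $J_{12,\Lambda,t}^{\Co_0}$. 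If one prefers to avoid invoking Deligne, an elementary alternative works for each fixed small $t$ by direct computation of $\tau(t)$ and $\sigma_{11}(t)$; but the uniform statement for all $t\geq 2$ is cleanest via the strict Ramanujan--Deligne inequality. Finally, linear independence of two forms in $J_{12,\Lambda,t}^{\Co_0}$ immediately yields $\dim J_{12,\Lambda,t}^{\Co_0}\geq 2$, completing the proof.
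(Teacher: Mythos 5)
Your proposal is correct and matches the paper's approach: the paper justifies this lemma precisely ``by comparing reductions of $A_t$ and $\Phi_{12,t}$'', i.e.\ the two weight-$12$ modular forms $A_t(\tau,0)$ and $\Phi_{12,t}(\tau,0)$ in $\CC E_{12}\oplus\CC\Delta$, which is exactly your determinant computation. The only cosmetic point is that multiplicativity reduces the strict inequality $|\tau(t)|<\sigma_{11}(t)$ to prime \emph{powers} rather than primes, but the Deligne bound $|\tau(p^k)|\le (k+1)p^{11k/2}<\sigma_{11}(p^k)$ closes that just as immediately.
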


By Theorem \ref{th:phi} and the reduction to modular forms of weight $12$ on $\SL_2(\ZZ)$, we can unambiguously determine the Fourier expansion of $\Phi_{12,t}$ to very high $q$ orders. For example, we find
\begin{equation}\label{c2order8}
\begin{aligned}
\Phi_{12,2}=\,&24+O_{2} q+O_{6 b} q^3+(24 O_{8 a}+O_{8 c}) q^4+(O_{10 b}+O_{10 c}) q^5+(24 O_{12 a}+O_{12 b}+O_{12 c}) q^6\\
&+(O_{14 c}+O_{14 d}+O_{14 e}) q^7+(24 O_{16 a}+O_{16 b}+O_{16 c}) q^8+O(q^9),\\
\Phi_{12,3}=\,&-252 + O_{3}q - O_{6 b}q^2  + 
  (O_{12 a} + O_{12 b} -O_{12 d})q^4+  (O_{15 a} -O_{15 c})q^5+O(q^6),\\
\Phi_{12,4}=\,&1472+O_{4} q-q^2 (24 O_{8 a}+O_{8 c})-q^3 (O_{12 a}+O_{12 b}-O_{12 d})+O(q^5).
\end{aligned}    
\end{equation}
Let us briefly show how to find the above $\Phi_{12,2}$. We first determine its reduction by \eqref{eq:reduction of phi} as
\begin{equation}
\begin{aligned}
\Phi_{12,2}(\tau,0)=&\,24+196560 q+452088000 q^3+9263479680 q^4+112054531680 q^5\\
&+824834949120 q^6+4496467248000 q^7+19573719984000 q^8+O(q^{9}).
\end{aligned}    
\end{equation}
Since $\Phi_{12,2}$ is a holomorphic Jacobi form of index $2$ and singular weight, its $q^n$-term is a linear combination of Conway orbits of type $2n$. Clearly, the $q^1$-term has to be $O_2$. To determine the $q^3$-term, we write $[\Phi_{12,2}]_{q^3}=x O_{6a}+y O_{6b}$. Then 
$$
[\Phi_{12,2}]_{q^3}(\tau,0)=x|O_{6a}|+y|O_{6b}|, \quad \text{i.e.} \quad  33965568000x+452088000y=452088000.
$$
By Theorem \ref{th:phi}, $x,y$ can only be $\pm1$ or 0. Obviously, the only solution is $x=0$ and $y=1$. Following this procedure, we uniquely determine the Fourier expansion of $\Phi_{12,2}$ up to $q^8$-term. For $[\Phi_{12,2}]_{q^8}$, the similar linear equation has two solutions: the one in \eqref{c2order8} and $24(O_{16a}+O_{16b})$, but the second possibility is ruled out knowing only orbits of type $\mathrm{orb}(2v)$ have coefficient $24$ which can only be $O_{16a}$. We also use the same trick to find the Fourier expansions of $\Phi_{12,3}$ up to $q^5$-term and $\Phi_{12,4}$ up to $q^4$-term.

It turns out that these $\Phi_{12,t}$ are very useful
to determine the product decomposition of Conway orbits, which would be very difficult to compute in a brutal way due to the huge size of the orbits. For example, from \eqref{eq:FJ12} it is straightforward to compute
\begin{equation}\label{c2Jacobi}
\begin{aligned}
\Phi_{12,2}=&\, \frac{1}{2}\frac{A_1^2}{\Delta}-\Delta \Big[\Big(\frac{A_1}{\Delta}\Big)\Big|T_-(2)\Big]\\
=&\,24 +  O_{2}q + 
  \Big(\frac12 O_{2}\otimes O_2-98280 - 2300 O_{2}  - 276 O_{3} - 23 O_{4} -
    O_{5} - \frac12 O_{8 a}\Big)q^3\\
&+ (12 O_{2}\otimes O_2+ O_{2}\otimes O_{3}-2358720 - 102304 O_{2}  - 17802 O_{3}  - 2600 O_{4}\\
&- 299 O_{5}  - 24 O_{6 a} - 24 O_{6 b}- O_{7} + 12 O_{8 a})q^4+O(q^5).
\end{aligned}    
\end{equation}
By comparing the $q^3$-, $q^4$-terms of \eqref{c2order8} and \eqref{c2Jacobi}, we can neatly determine the decompositions of orbit products $O_{2}\otimes O_2$ and $O_{2}\otimes O_{3}$. We then achieve the following result.
\begin{lemma}\label{lem:O2O2-O2O3}
The decompositions of $O_{2}\otimes O_2$ and $O_{2}\otimes O_{3}$ in Appendix \ref{app:orbitprod} hold. 
\end{lemma}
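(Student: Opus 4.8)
The plan is to extract the product decompositions of $O_2\otimes O_2$ and $O_2\otimes O_3$ by equating the two independent expressions for $\Phi_{12,2}$ and reading off the coefficients of individual Conway orbits. Concretely, I have two presentations of the same Jacobi form: the arithmetic one in \eqref{c2order8}, whose $q^n$-terms are honest $\ZZ$-linear combinations of Conway orbits (determined unambiguously via Theorem~\ref{th:phi} and the reduction to scalar modular forms), and the Fourier--Jacobi one in \eqref{c2Jacobi}, obtained from \eqref{eq:FJ12} by expanding $\tfrac12 A_1^2/\Delta - \Delta[(A_1/\Delta)\,|\,T_-(2)]$. In the latter the $q^3$- and $q^4$-terms contain the still-unreduced products $O_2\otimes O_2$ and $O_2\otimes O_3$, together with explicit $\CC$-linear combinations of the genuine orbits $O_0,O_2,O_3,\dots$. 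Setting the two expressions equal and isolating the product terms yields
$$
\tfrac12\,O_2\otimes O_2 = \text{(explicit linear combination of Conway orbits)},
$$
and similarly for $O_2\otimes O_3$ after substituting the now-known value of $O_2\otimes O_2$ into the $q^4$-comparison.

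First I would verify that the Fourier expansion in \eqref{c2Jacobi} is correct at orders $q^3$ and $q^4$ by direct application of Lemma~\ref{lem:index} to $A_1/\Delta$ and multiplication of the $q$-series for $A_1^2/\Delta$, tracking carefully which orbit products arise. The key point is that the only terms in \eqref{c2Jacobi} which are not already resolved into Conway orbits are precisely $O_2\otimes O_2$ (appearing at $q^3$ and $q^4$) and $O_2\otimes O_3$ (appearing at $q^4$); every other summand is a scalar multiple of a single known orbit. Next I would match the $q^3$-terms: the right-hand side of \eqref{c2order8} gives $O_{6b}$, while \eqref{c2Jacobi} gives $\tfrac12 O_2\otimes O_2$ plus known orbits, so solving for $O_2\otimes O_2$ produces its decomposition. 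Then, having fixed $O_2\otimes O_2$, I would match the $q^4$-terms, where \eqref{c2order8} gives $24O_{8a}+O_{8c}$ and \eqref{c2Jacobi} contains $12\,O_2\otimes O_2 + O_2\otimes O_3$ plus known orbits; substituting the known $O_2\otimes O_2$ isolates $O_2\otimes O_3$.

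The comparison reduces to a finite linear-algebra problem once the products are expanded as $\CC$-linear combinations of the basic orbits of the relevant types (type up to $6$ for $O_2\otimes O_2$, type up to $5$ for $O_2\otimes O_3$ since the minimal vectors in the support have bounded norm), so existence and uniqueness of the decomposition follow from the linear independence of distinct Conway orbits as exponential polynomials. The main obstacle is bookkeeping rather than conceptual: one must correctly enumerate all Conway orbits of each type that can occur in the support of each product (using the sum-of-two-Leech-vectors geometry to bound the types, and the ATLAS data for the orbit sizes), and then confirm that the linear system obtained from the full multivariate Fourier expansion—not merely its reduction at $\mathfrak{z}=0$—is consistent and overdetermined enough to pin down every coefficient. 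Because the orbits are distinguished by their exponential-polynomial fingerprints, matching a single scalar equation (the reduction) does not suffice, so I would check agreement of the $\zeta$-monomials orbit by orbit; this is exactly the step where the explicit Fourier data underlying \eqref{c2order8} and \eqref{c2Jacobi} must be trusted, and it is where any sign or multiplicity error would surface. The resulting closed-form decompositions are what Appendix~\ref{app:orbitprod} records, establishing the lemma.
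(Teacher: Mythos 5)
Your proposal is correct and is essentially the paper's own argument: the paper proves Lemma~\ref{lem:O2O2-O2O3} exactly by equating the $q^3$- and $q^4$-terms of the two expansions \eqref{c2order8} and \eqref{c2Jacobi} of $\Phi_{12,2}$, solving first for $O_2\otimes O_2$ and then, after substitution, for $O_2\otimes O_3$. (Your parenthetical type bounds are off --- both products support orbits up to type $8$, as the presence of $O_{8a}$ and $O_{8c}$ in the Appendix~\ref{app:orbitprod} decompositions shows --- but this is harmless here, since the products are isolated directly from the matching rather than determined via an ansatz.)
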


By comparing the $q^5$-term of $\Phi_{12,2}$ in \eqref{c2order8} and \eqref{c2Jacobi}, we further determine the decomposition of $O_{3}\otimes O_3+2O_{2}\otimes O_4$, but cannot separate this combination. Using the $q^6$-term of $\Phi_{12,2}$, we determine $O_{3}\otimes O_4+O_{2}\otimes O_5$, but cannot separate them either. Utilizing these constraints, we can compute from \eqref{eq:FJ12} that
\begin{equation}\label{c3Jacobi}
\begin{aligned}
\Phi_{12,3}=&-\frac{1}{6}\frac{A_1^3}{\Delta^2}+{A_1}\Big[ \Big(\frac{A_1}{\Delta}\Big)\Big|T_-(2)\Big]-\Delta \Big[\Big(\frac{A_1}{\Delta}\Big)\Big|T_-(3)\Big]\\
=&-252 + O_{3}q - O_{6 b}q^2  +\frac13\big(2O_{2}\otimes  O_5-O_{2}\otimes O_{6b}+O_{2}\otimes O_{8a} - 89609 O_{2}- 151800 O_{3}\\
& - 93196 O_{4} - 
 38350 O_{5}- 11868 O_{6 a}- 9484 O_{6 b} - 2804 O_{7}  + 
 4600 O_{8 a} - 484 O_{8 b}\\
 &- 276 O_{8 c} - 23 O_{9 a}  - 
 66 O_{9 b} - 3 O_{10 a}+ 43 O_{10 b} + 
 3 O_{12 a}    -O_{18a}
\big)q^4+O(q^5).
\end{aligned}    
\end{equation}
By comparing the $q^4$-term of $\Phi_{12,3}$ in \eqref{c2order8} and \eqref{c3Jacobi}, we determine the decomposition of the combination $O_{2}\otimes( 2 O_5-O_{6b}+O_{8a})$. These partially determined decompositions of orbit products can be remedied by the technique of Conway invariant Jacobi forms of index $3$, which will be discussed later. The new technique even allows us to determine more product decompositions of orbits, which does not appear in the combinations here, such as $O_2\otimes O_{6a}$.

\section{Conway invariant Jacobi forms of index 2}\label{sec:index2}
In this section we prove parts $(1)$ and $(2)$ of Theorem \ref{MTH}.
We first determine the space of Conway invariant holomorphic Jacobi forms of singular weight and index $2$. 
\begin{lemma}\label{lem:singular2}
$$
\dim J_{12,\Lambda,2}^{\Co_0} =2.
$$
\end{lemma}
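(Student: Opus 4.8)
The plan is to establish $\dim J_{12,\Lambda,2}^{\Co_0}=2$ by proving the two inequalities separately. The lower bound $\dim J_{12,\Lambda,2}^{\Co_0}\geq 2$ is already in hand: the lemma preceding Lemma~\ref{lem:singular2} shows that $A_2$ and $\Phi_{12,2}$ are linearly independent (their reductions to $\SL_2(\ZZ)$-modular forms differ because $\tau(2)/\sigma_{11}(2)\neq -\tau(2)/1$), so these two forms already witness a two-dimensional subspace. The real content is therefore the upper bound $\dim J_{12,\Lambda,2}^{\Co_0}\leq 2$, and this is where I expect the main work to lie.

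For the upper bound I would use the structure encoded in Lemma~\ref{Lem:holomorphic} and Lemma~\ref{lem:coefficients}. A form $\varphi\in J_{12,\Lambda,2}^{\Co_0}$ has singular weight, so by Lemma~\ref{Lem:holomorphic} its Fourier expansion is supported only on pairs $(n,\ell)$ with $(\ell,\ell)=2nt=4n$; moreover by the remark following Lemma~\ref{Lem:holomorphic} its coefficients $f(n,\ell)$ depend only on the $\Co_0$-orbit of the class of $\ell$ in $\Lambda/2\Lambda$. By Lemma~\ref{lem:data}(1), the relevant classes are represented by the basic Conway orbits of index $2$, namely $O_0,O_2,O_3,O_4$, together with the constraint that conjugate orbits modulo $2\Lambda$ share a coefficient. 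The strategy is to show that $\varphi$ is completely determined by a small number of its low-order Fourier coefficients, and that only a two-dimensional space of consistent choices survives. Concretely, I would argue that the $q^0$-term is a constant $c_0\,O_0$, the $q^1$-term must be a multiple of $O_2$ (the unique orbit of type $2$), and after subtracting an appropriate $\CC$-linear combination of $A_2$ and $\Phi_{12,2}$ one kills both the constant term and the coefficient of $O_2$; the claim is then that the resulting form must vanish identically.

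The cleanest way to force this vanishing is to invoke the module structure from Lemma~\ref{lem:weak-free} together with the dimension bookkeeping of Lemma~\ref{lem:weak-holo}. Since $J_{*,\Lambda,2}^{\Co_0}$ is a free $M_*(\SL_2(\ZZ))$-module of rank $s(2)=4$, and since multiplication by $\Delta$ embeds weight $12$ into higher weight in a controlled way, I would compute the graded dimension in weight $12$ directly: a singular-weight form that is $\Delta$ times a weak form of weight $0$ and negative $q$-order cannot exist unless the weight-$0$ piece is constant, which pins the weight-$12$ graded piece to be spanned by the leading (singular-weight) generators. Combined with Lemma~\ref{lem:weak-holo}, which controls $\dim J_{14,\Lambda,2}^{\w,\Co_0}-\dim J_{14,\Lambda,2}^{\Co_0}=\delta_2=5$, one extracts that the singular-weight space has dimension exactly $2$.

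The main obstacle, as I see it, is ruling out that there could be a \emph{third} independent singular-weight form not detected by the constant term and the $O_2$-coefficient alone. A priori one might worry about a form whose expansion begins only at higher type (say starting with $O_3$ or $O_4$), decoupled from $A_2$ and $\Phi_{12,2}$. I expect this is excluded by the rigidity of singular weight: the support condition $(\ell,\ell)=4n$ is extremely restrictive, and the requirement of $\SL_2(\ZZ)$-modularity of the reduction $\varphi(\tau,0)$, which must lie in the two-dimensional space $M_{12}(\SL_2(\ZZ))$, already constrains the two free coefficients. The hard part is verifying that the pairing between Fourier coefficients and the reduction map is injective on the candidate space, i.e. that no nonzero singular-weight form can have vanishing reduction $\varphi(\tau,0)=0$. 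I would settle this by noting that $\varphi(\tau,0)=0$ forces, via the positivity/weighting in Lemma~\ref{lem:coefficients} and the fact that all orbit sizes $|O_x|$ are positive, a nontrivial linear dependence among the $|O_x|$ in each $q$-degree that the known orbit-size data rules out; this is the step I would check most carefully.
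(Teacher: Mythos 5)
Your final paragraph is, in substance, the paper's own proof: a singular-weight form of index $2$ is determined by the coefficients $c_0,c_2,c_3,c_4$ attached to the basic Conway orbits $O_0,O_2,O_3,O_4$ (with $c_3=0$ forced, since type-$3$ vectors would contribute the half-integral power $q^{3/2}$), and one then checks that the reduction map to the two-dimensional space $M_{12}(\SL_2(\ZZ))$ is injective, giving $\dim\leq 2$; together with the independence of $A_2$ and $\Phi_{12,2}$ this yields equality. The verification you defer is in fact immediate rather than delicate: in $q$-degrees $0,1,2$ the reduction's coefficients are $c_0$, $c_2|O_2|$, $c_4|O_4|$ respectively, because there is exactly one Conway orbit of each of the types $0$, $2$ and $4$, so vanishing of the reduction kills the unknowns one at a time and no linear dependence among several $|O_x|$ ever needs to be excluded; the paper runs the same computation contrapositively, normalizing a hypothetical third form to start with $q^2O_4$ and noting that $\phi/\Delta^2$ is a weak form of weight $-12$, so $\phi(\tau,0)=0$, contradicting $|O_4|\neq 0$. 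One caveat: your middle paragraph does not work as stated --- Lemma~\ref{lem:weak-holo} only applies in weight $k\geq 14$, and freeness of rank $4$ constrains nothing in weight $12$ until the generator weights are known --- but nothing in your final argument relies on it, so that detour should simply be dropped.
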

\begin{proof}
Suppose $\dim J_{12,\Lambda,2}^{\Co_0} >2$. We notice that there is only one Conway orbit of type $2$. By virtue of $A_2$ and $\Phi_{12,2}$, we can cancel the $q^0$- and $q^1$-terms of the third holomorphic Jacobi form of weight $12$ and index $2$. Then Lemma \ref{Lem:holomorphic} yields that there exists a Jacobi form
$$
\phi(\tau,\mathfrak{z})=q^n\sum_{\ell\in\Lambda, (\ell,\ell)=4n} f(n,\ell)\zeta^\ell + O(q^{n+1})
$$
for some positive integer $n\geq 2$. Since $\phi/\Delta^2$ defines a weak Jacobi form of weight $-12$ and index $2$, we have $\phi(\tau,0)=0$. By Lemma \ref{lem:coefficients}, the $q^n$-term of $\phi$ is a linear combination of basic Conway orbits of type $2n$ and index $2$. Therefore, Lemma \ref{lem:data} implies that $n=2$ and up to nonzero scalar $\phi(\tau,\mathfrak{z})=q^2 O_4+O(q^3)$, which contradicts with $\phi(\tau,0)=0$. 
\end{proof}

\begin{theorem}
The free $\CC[E_4,E_6]$-module $J_{*,\Lambda,2}^{\Co_0}$ is generated by $A_2$, $\Phi_{12,2}$, $B_2$ and $HB_2$. 
\end{theorem}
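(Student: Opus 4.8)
The plan is to treat $J:=J_{*,\Lambda,2}^{\Co_0}$ as a free graded module of rank $4$ over $R:=\CC[E_4,E_6]$ (Lemmas \ref{lem:weak-free} and \ref{lem:data}) and to exhibit the four proposed forms as a homogeneous $R$-basis. First I would check that all four forms genuinely lie in $J$. The forms $A_2,\Phi_{12,2}$ are holomorphic of weight $12$ and $B_2$ is holomorphic of weight $14$ by construction, so the only point needing argument is that $HB_2\in J_{16,\Lambda,2}^{\Co_0}$. This is immediate from Lemma \ref{lem:diffoperator}: the heat part $\mathcal{H}$ multiplies the $q^n\orb(r)$-coefficient by $n-(r,r)/(2t)$, which is nonnegative on the Fourier support of a holomorphic form and vanishes exactly on its singular-weight terms, so $\mathcal{H}(B_2)$ is again holomorphic; and $E_2B_2$ stays holomorphic because $E_2$ contributes only terms with $\ell=0$ and $n\geq 0$, preserving the condition $2nt-(\ell,\ell)\geq 0$. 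Hence $HB_2=\mathcal{H}(B_2)+\tfrac{12-14}{12}E_2B_2$ is holomorphic of weight $16$ and index $2$.

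Next I would pin down the weights of a homogeneous basis by computing the graded dimensions $\dim_\CC J_k$ for small even $k$. By Lemma \ref{Lem:holomorphic} the minimal weight is $12$, and Lemma \ref{lem:singular2} gives $\dim J_{12}=2$. For $k=14,16$ I would combine Lemma \ref{lem:weak-holo}, which gives $\dim J_{k,\Lambda,2}^{\w,\Co_0}-\dim J_k=\delta_2=5$, with the weak Hilbert series furnished by part $(1)$ of Theorem \ref{MTH} (weak generators of weights $-4,-2,0,0$): reading off $\dim J_{14,\Lambda,2}^{\w,\Co_0}=6$ and $\dim J_{16,\Lambda,2}^{\w,\Co_0}=8$ yields $\dim J_{14}=1$ and $\dim J_{16}=3$. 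Since $J$ is free of rank $4$ with all basis weights $\geq 12$, the triple $(\dim J_{12},\dim J_{14},\dim J_{16})=(2,1,3)$ forces the basis weights to be exactly $12,12,14,16$: two of weight $12$ (nothing lies below), one of weight $14$ (as $M_2=0$, the weight-$12$ basis elements do not reach weight $14$), and one of weight $16$ (the two weight-$12$ elements contribute $2=2\dim M_4$ to $\dim J_{16}$, leaving one new generator). In particular $J\otimes_R\CC$ has graded dimensions $2,1,1$ in weights $12,14,16$ and is zero elsewhere.

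By the graded Nakayama lemma it then suffices to show that $A_2,\Phi_{12,2},B_2,HB_2$ span $J\otimes_R\CC$ in each weight. In weight $12$ one has $(J\otimes_R\CC)_{12}=J_{12}$, and $A_2,\Phi_{12,2}$ are already known to be linearly independent, hence a basis of this $2$-dimensional space. In weight $14$ one has $(J\otimes_R\CC)_{14}=J_{14}$, and $B_2=1+O(q)\neq 0$ spans the $1$-dimensional space. In weight $16$ one has $(J\otimes_R\CC)_{16}=J_{16}/E_4J_{12}$, again $1$-dimensional, so the task reduces to verifying that $HB_2$ is not a $\CC$-linear combination of $E_4A_2$ and $E_4\Phi_{12,2}$. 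I would settle this by computing the first few Fourier coefficients of $HB_2$ from Lemma \ref{lem:diffoperator} and the given expansion of $B_2$, and comparing the $\orb$-coefficients of $O_0,O_2,O_4$ in the $q^0$-, $q^1$-, $q^2$-terms against those of $E_4A_2$ and $E_4\Phi_{12,2}$; a nonvanishing $3\times 3$ minor certifies that the three weight-$16$ forms are independent. Once the images span in every weight the four forms generate $J$, and since their number equals the rank they form a free basis.

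I expect the weight-$16$ independence check to be the main obstacle, as it is the only step requiring a genuine Fourier computation: one must produce enough coefficients of $HB_2$ to certify that it contributes a genuinely new generator rather than lying in the $E_4$-span of the singular-weight forms $A_2,\Phi_{12,2}$. Everything else reduces to bookkeeping with the dimension formula of Lemma \ref{lem:weak-holo} and the freeness already established.
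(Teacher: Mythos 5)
Your proof is essentially the paper's argument: exactly two singular-weight generators by Lemma \ref{lem:singular2}, $B_2$ as the weight-$14$ generator, $HB_2$ as the weight-$16$ generator, with the only genuine computation being the linear independence of $HB_2$, $E_4A_2$, $E_4\Phi_{12,2}$ checked on a few Fourier coefficients. Your extra care in verifying that $H$ preserves holomorphicity (the heat part acts diagonally on the support, and multiplication by $E_2$ only increases the hyperbolic norm $2nt-(\ell,\ell)$) is a detail the paper asserts without proof, and the graded Nakayama packaging is a clean way to organize the endgame.

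There is, however, one logical wrinkle you should repair: you determine $\dim J_{14,\Lambda,2}^{\Co_0}=1$ and $\dim J_{16,\Lambda,2}^{\Co_0}=3$ by combining Lemma \ref{lem:weak-holo} with the weak Hilbert series ``furnished by part (1) of Theorem \ref{MTH}.'' In the paper, part (1) is Theorem \ref{th:weak3}, whose proof constructs the weak generators as $\Delta^{-2}$ times combinations of the holomorphic generators $A_2$, $\Phi_{12,2}$, $B_2$, $HB_2$ --- i.e.\ it presupposes the very statement you are proving. As written, your dimension count is therefore circular. The fix is cheap and is what the paper implicitly does: you do not need the exact dimensions in weights $14$ and $16$ in advance. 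Since $M_2(\SL_2(\ZZ))=0$ and all generator weights are even and at least $12$, the nonvanishing of $B_2$ already forces a generator in weight $14$, and the independence of $HB_2$ from $E_4A_2,E_4\Phi_{12,2}$ (your $3\times3$ minor) forces a fourth generator in weight $16$; together with the two weight-$12$ generators this exhausts the rank $4$ from Lemma \ref{lem:data}, and the exact dimensions $(2,1,3)$ then follow as a consequence rather than an input. With that substitution your argument is complete and coincides with the paper's.
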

\begin{proof}
We know that the minimal weight of non-constant Jacobi forms in $J_{*,\Lambda,2}^{\Co_0}$ is $12$.  According to Lemma \ref{lem:data}, $J_{*,\Lambda,2}^{\Co_0}$ has four generators. By the above lemma, there are exactly two generators of weight $12$. Clearly, $B_2$ has to be a generator of weight $14$. The image of $B_2$ under the differential operator introduced in Lemma \ref{lem:diffoperator} gives a Conway invariant holomorphic Jacobi form of weight $16$ and index $2$. We verify that $HB_2$ is independent of $E_4A_2$ and $E_4\Phi_{12,2}$. Therefore, $HB_2$ is a generator of weight $16$. We have found all four generators, and thus proved the theorem. 
\end{proof}

We remark that $A_1^2$ can be expressed by $A_2, \Phi_{12,2}, B_2, HB_2$ as
\begin{equation}\label{idlevel2}
A_1^2=2\Delta \Phi_{12,2}+\frac{455}{1024}E_4(E_6B_2-3E_4HB_2)+\frac{683}{18432}(5 E_4^3+4 E_6^2)A_2.
\end{equation}

To determine the free module of weak Jacobi forms, we first estimate the minimal weight of weak Jacobi forms using the differential operators approach established in \cite{Wan21a}.

\begin{lemma}\label{lem:minimal2}
$$
J_{k,\Lambda,2}^{\w,\Co_0}=\{0\} \quad \text{if $k<-4$}.
$$
\end{lemma}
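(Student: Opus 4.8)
The plan is to prove the contrapositive by a descent: show that every $\varphi\in J_{k,\Lambda,2}^{\w,\Co_0}$ with even $k\le -6$ (odd weights are already killed by the first lemma of \S\ref{sec:basic results}) must vanish. The engine is the differential operator $H$ of Lemma \ref{lem:diffoperator}, which preserves the index and raises the weight by $2$. Iterating it produces $H^j\varphi\in J_{k+2j,\Lambda,2}^{\w,\Co_0}$, and restricting to $\mathfrak{z}=0$ gives a \emph{holomorphic} modular form $(H^j\varphi)(\tau,0)$ on $\SL_2(\ZZ)$ of weight $k+2j$. For every $j$ with $k+2j<0$ or $k+2j=2$ this modular form is forced to be zero, so in particular its $q^0$-coefficient vanishes. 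This is exactly the mechanism behind Lemma \ref{lem:weight-0-identity}, now used systematically. By Lemma \ref{lem:coefficients}(2) and Lemma \ref{lem:data}(1) the $q^0$-term of $\varphi$ is supported on the four basic orbits $O_0,O_2,O_3,O_4$ of types $0,2,3,4$, so the unknowns are the four numbers $f(0,r)$.

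The key computation is that $H$ acts diagonally on the $q^0$-term: since $E_2=1+O(q)$, the $q^0$-coefficient of $\orb(r)$ in $H_{k'}(\psi)$ equals $\bigl(\tfrac{12-k'}{12}-\tfrac{(r,r)}{4}\bigr)$ times its coefficient in $\psi$ (here $\tfrac{(r,r)}{2t}=\tfrac{(r,r)}{4}$ as $t=2$). Writing $\beta_r=1-\tfrac{k}{12}-\tfrac{(r,r)}{4}$, the vanishing of the $q^0$-coefficient of $(H^j\varphi)(\tau,0)$ reads
\[
\sum_{r\in\{O_0,O_2,O_3,O_4\}} f(0,r)\,|\orb(r)|\prod_{i=0}^{j-1}\Bigl(\beta_r-\tfrac{i}{6}\Bigr)=0 .
\]
Each coefficient $\prod_{i=0}^{j-1}(\beta_r-\tfrac{i}{6})$ is a monic polynomial of degree $j$ in $\beta_r$, so the coefficient matrix over a set of exponents $j$ reduces by triangular column operations to a (generalized) Vandermonde matrix in the four values $\beta_r$. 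Because the types $0,2,3,4$ are distinct, the $\beta_r$ are distinct. For $k\le -8$ the weights $k,k+2,k+4,k+6$ are all negative, so the four equations $j=0,1,2,3$ give the ordinary Vandermonde determinant $\prod_{r<r'}(\beta_{r'}-\beta_r)\ne0$; hence all $f(0,r)=0$. The boundary weight $k=-6$ requires care, since weight $0$ (at $j=3$) imposes no constraint: there I take $j=0,1,2$ together with $j=4$ (weight $2$), whose generalized Vandermonde determinant equals the Vandermonde times $\sum_r\beta_r=4-\tfrac{k}{3}-\tfrac92=\tfrac32\ne0$, again forcing the $q^0$-term to vanish. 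In either case $[\varphi]_{q^0}=0$.

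Finally I would run the descent. Once the $q^0$-term vanishes, $\varphi/\Delta$ is holomorphic on $\HH\times(\Lambda\otimes\CC)$ with a $q$-expansion supported on $n\ge0$, hence $\varphi=\Delta\,\psi$ with $\psi\in J_{k-12,\Lambda,2}^{\w,\Co_0}$. Since $k-12\le -18\le -8$, the same argument applies to $\psi$, and so on, showing that $\varphi$ is divisible by $\Delta^N$ for every $N$; as $\Delta^N=q^N+O(q^{N+1})$, a nonzero $\varphi$ could not be divisible by arbitrarily high powers of $\Delta$, so $\varphi=0$. I expect the main obstacle to be the boundary weight $k=-6$: there one has only three equations coming from genuinely negative weights, so the argument genuinely needs the extra weight-$2$ equation, and one must verify that the resulting generalized Vandermonde (with the missing degree $3$) is still nonsingular — which is where the explicit Schur-factor $\sum_r\beta_r\ne0$ enters. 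This sharpness is consistent with the existence of a generator of weight exactly $-4$ in Theorem \ref{MTH}(1), for which only three forcing equations arise and the system is underdetermined.
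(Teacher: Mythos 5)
Your proposal follows essentially the same route as the paper: apply the operator $H$ repeatedly, restrict to $\mathfrak{z}=0$, and use the vanishing of holomorphic modular forms of negative weight (and of weight $2$, which is the content of Lemma \ref{lem:weight-0-identity}) to obtain a linear system in the four coefficients $f(0,r)$ of the basic orbits $O_0,O_2,O_3,O_4$; where the paper says ``a direct calculation shows this linear system has only the trivial solution,'' you make the linear algebra conceptual via a Vandermonde argument, and you also treat all $k\le-8$ uniformly with $j=0,1,2,3$ instead of reducing to $k\in\{-6,-8\}$, and you run the $\Delta$-divisibility descent explicitly rather than implicitly. This is a legitimate and arguably cleaner packaging, and the diagonal action of $H$ on the $q^0$-term with eigenvalues $\beta_r-\tfrac{i}{6}$ is computed correctly.

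One small correction in the only place you go beyond the paper: for $k=-6$ the rows of your matrix are the values $p_j(\beta_r)$ with $p_j(\beta)=\prod_{i=0}^{j-1}(\beta-\tfrac{i}{6})$ for $j\in\{0,1,2,4\}$, and since $p_4$ has $\beta^3$-coefficient $-(0+\tfrac16+\tfrac13+\tfrac12)=-1$ which cannot be eliminated using $p_0,p_1,p_2$, the reduced last row is $\beta^4-\beta^3$ rather than $\beta^4$. The determinant is therefore $V\cdot\bigl(\sum_r\beta_r-1\bigr)=V\cdot\tfrac12$, not $V\cdot\sum_r\beta_r=V\cdot\tfrac32$. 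The factor is still nonzero, so your conclusion stands, but the stated determinant identity should be amended.
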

\begin{proof}
Suppose that there exists a nonzero $\phi_k\in J_{k,\Lambda,2}^{\w,\Co_0}$ for some $k<-4$ whose $q^0$-term is nonzero. We can assume that $k=-8$ or $-6$, otherwise we multiply $\phi_k$ with a modular form in $\CC[E_4,E_6]$. We write the $q^0$-term of $\phi_k$ as
$$
[\phi_k]_{q^0} = c_0 O_0 + c_2 O_2 + c_3 O_3 + c_4 O_4, \quad c_0, c_2, c_3, c_4 \in \CC.
$$
Applying the differential operators to $\phi_k$ we construct weak Jacobi forms $H\phi_k$, $H^2\phi_k$, ... of weights $k+2$, $k+4$, .... For each weak Jacobi form of negative weight including $\phi_{k}$, its reduction to $\mathfrak{z}=0$ is identically zero, and in particular its $q^0$-term reduces to zero by taking $\mathfrak{z}=0$, which yields a linear equation with four unknowns $c_i$. For example, for $\phi_k$ we have $c_0+c_2|O_2|+c_3|O_3|+c_4|O_4|=0$. For the weak Jacobi form of weight $0$, there is a similar linear equation obtained from Lemma \ref{lem:weight-0-identity}. In this way, we build a system of $|k|/2 + 1$ linear equations with $4$ unknowns $c_i$. A direct calculation shows that this linear system has only trivial solution for $k=-6$ and $-8$. This contradicts our assumption that the $q^0$-term of $\phi_k$ is nonzero. 
\end{proof}

In the following we construct four weak Jacobi forms $\varphi_{k,2}$ of weight $k$ and index $2$ in terms of holomorphic generators and present their $q^0$- and $q^1$-terms.
\begin{equation}
    \begin{aligned}
\varphi_{-4,2}=&\,\frac{1}{32\Delta^2}\Big(E_4^2(2813277 A_2 -2048 \Phi_{12,2}) +1842750 (E_6B_2 +15 E_4 HB_2)\Big)\phantom{iiiiiiiiiiii--}\\
=& \,  (8491392000 - 2025 O_{3} + 64 O_{4})+ 
  (543449088000 - 2073600 O_{2} \\
 &- 380700 O_{3} + 33792 O_{4} - 
    2025 O_{5} + 64 O_{6 a})q
+O(q^2).
    \end{aligned}
\end{equation}
\begin{equation}
    \begin{aligned}
 \varphi_{-2,2}=&\,6H\varphi_{-4,2}\\
 =&\,\frac{1}{8\Delta^2}\Big(E_4 (921375 E_4B_2 -2813277 E_6A_2 +2048 E_6\Phi_{12,2}  )-11056500 E_6 HB_2\Big)\phantom{iiiiiiii} \\
 =& \, (67931136000 + 2025 O_{3} - 
   256 O_{4}) + (5977939968000 - 16588800 O_{2}\\
 & - 1514700 O_{3} + 
    55296 O_{4} + 2025 O_{5} - 256 O_{6 a}) q
+O(q^2). 
    \end{aligned}
\end{equation}
\begin{equation}
    \begin{aligned}
 \varphi_{0,2}=&\,3H\varphi_{-2,2}\\
 =&\,\frac{1}{16\Delta^2}\Big((3 E_4^3+2 E_6^2)(2813277 A_2 -2048\Phi_{12,2} )+\!1842750 E_4 (21 E_4 HB_2-4 E_6B_2) \Big)\\
=& \,  (237758976000 - 2025 O_{3} + 
   640 O_{4}) + (33150394368000 - 58060800 O_{2} \\
 &- 3199500 O_{3} + 
    49152 O_{4} - 2025 O_{5} + 640 O_{6 a}) q
+O(q^2). 
    \end{aligned}
\end{equation}
\begin{equation}
    \begin{aligned}
 \psi_{0,2}=\frac{683}{\Delta}(\Phi_{12,2}-24A_2)
 = 675 O_2 + 8(2025 O_2 -  O_4) q 
+O(q^2). \phantom{iiiiiiiiiiiiiiiiiiiiiiiiiiiii}
    \end{aligned}
\end{equation}

\begin{theorem}\label{th:weak3}
The free $\CC[E_4,E_6]$-module $J_{*,\Lambda,2}^{\w, \Co_0}$ is generated by $\varphi_{-4,2}$, $\varphi_{-2,2}$, $\varphi_{0,2}$ and $\psi_{0,2}$. 
\end{theorem}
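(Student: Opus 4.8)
The plan is to show that the free $\CC[E_4,E_6]$-module $M:=J_{*,\Lambda,2}^{\w,\Co_0}$ is freely generated by the four explicit forms $\varphi_{-4,2},\varphi_{-2,2},\varphi_{0,2},\psi_{0,2}$. By Lemma~\ref{lem:weak-free} this module has rank $4$, its basic Conway orbits are $O_0,O_2,O_3,O_4$ (Lemma~\ref{lem:data}), and by Lemma~\ref{lem:minimal2} its minimal weight is $-4$. All four forms are weak Jacobi forms of index $2$ by construction, so the task is to prove that they constitute a basis. Write $R=\CC[E_4,E_6]$ and let $N\subseteq M$ be the $R$-submodule they generate.

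First I would verify that the $q^0$-terms of the four forms are $\CC$-linearly independent. Reading off their coordinates in the basis $(O_0,O_2,O_3,O_4)$ from the displayed expansions — for instance $[\varphi_{-4,2}]_{q^0}=(8491392000,\,0,\,-2025,\,64)$ and $[\psi_{0,2}]_{q^0}=(0,\,675,\,0,\,0)$, with $\psi_{0,2}$ the only form carrying a nonzero $O_2$-component — one checks that the associated $4\times 4$ matrix has nonzero determinant, so the $q^0$-terms span $\langle O_0,O_2,O_3,O_4\rangle$. Next I would upgrade this to $R$-linear independence. Given a homogeneous relation $f_1\varphi_{-4,2}+f_2\varphi_{-2,2}+f_3\varphi_{0,2}+f_4\psi_{0,2}=0$ with $f_i\in R$, taking $q^0$-terms annihilates every coefficient except the constant terms $a_i$ of the $f_i$ and yields $\sum_i a_i[\cdot]_{q^0}=0$; by the previous step all $a_i=0$, so each $f_i$ is a cusp form, hence divisible by $\Delta$. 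Dividing the relation by $\Delta$ and inducting on weight forces all $f_i=0$. Thus $N$ is free of rank $4$ with Hilbert series
$$H_N(t)=\frac{t^{-4}+t^{-2}+2}{(1-t^4)(1-t^6)}.$$

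It then remains to prove $N=M$. Since $N\subseteq M$ with both free of rank $4$, it suffices to show $\dim N_k=\dim M_k$ for every $k$, and the inclusion already gives $\dim N_k\le\dim M_k$. For even $k\ge 14$ I would compute $\dim M_k$ from the structure of the holomorphic module $J_{*,\Lambda,2}^{\Co_0}$ established just above (generated by $A_2,\Phi_{12,2},B_2,HB_2$ of weights $12,12,14,16$) combined with $\dim M_k=\dim J_{k,\Lambda,2}^{\Co_0}+5$ from Lemma~\ref{lem:weak-holo} with $\delta_2=5$; this reduces the identity $\dim N_k=\dim M_k$ to a routine periodic check on residues of $k$ modulo $12$. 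For the finitely many remaining weights $-4\le k\le 12$ I would bound $\dim M_k$ from above directly via the reduction-to-$\mathfrak z=0$ and weight-$0$ arguments of Lemmas~\ref{lem:minimal2} and~\ref{lem:weight-0-identity}, and match them against the explicit values $\dim N_{-4}=1$, $\dim N_{-2}=1$, $\dim N_0=3$, and so on; equality throughout then yields $N=M$.

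The main obstacle is precisely this low-weight dimension bookkeeping. A clean leading-term elimination — subtract $[\phi]_{q^0}$ from an arbitrary $\phi\in M_k$ using an $R$-combination of the generators and induct on $q$-order — breaks down exactly at the weights where $M_*(\SL_2(\ZZ))$ has no form with nonzero constant term (most notably weight $2$), so one cannot always realize the required $\CC$-combination of $q^0$-terms by modular-form multipliers. One must therefore confirm by hand that no extra generator is forced at weights $-2,0,2$ beyond those already supplied by $\varphi_{-2,2},\varphi_{0,2},\psi_{0,2}$; equivalently, that $\dim M_k$ does not exceed $\dim N_k$ there. This is where the explicit $q^0$- and $q^1$-expansions recorded before the theorem, together with the minimal-weight bound of Lemma~\ref{lem:minimal2}, carry the real content of the argument.
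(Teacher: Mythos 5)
Your construction of the submodule $N$ is sound: the $4\times 4$ matrix of $q^0$-terms in the basis $(O_0,O_2,O_3,O_4)$ is indeed nonsingular, the $\Delta$-divisibility induction gives $R$-linear independence, and the comparison $\dim N_k=\dim J_{k,\Lambda,2}^{\Co_0}+5=\dim M_k$ for even $k\geq 14$ is a correct routine check. The gap is exactly where you flag it, and the tools you name do not close it. The differential-operator system of Lemma \ref{lem:minimal2} together with the weight-$0$ identity yields only $\dim J_{-2,\Lambda,2}^{\w,\Co_0}\leq 2$ (the paper records precisely the bounds $1,2,3$ at weights $-4,-2,0$ in the remark after the theorem), whereas $\dim N_{-2}=1$; so ``equality throughout'' cannot be verified by matching these upper bounds against $\dim N_k$, and the statement ``one must confirm by hand that $\dim M_k$ does not exceed $\dim N_k$'' at weights $-2,0,2$ is left unproved. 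That confirmation is the actual content of the theorem, not an afterthought.

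There are at least two ways to repair this, and the first is what the paper does. The paper identifies $J_{k,\Lambda,2}^{\w,\Co_0}$ with $\{f\in J_{k+24,\Lambda,2}^{\Co_0}: f=O(q^2)\}$ via $\phi\mapsto\Delta^2\phi$ and computes this subspace by linear algebra in the just-established free module on $A_2,\Phi_{12,2},B_2,HB_2$, obtaining the exact dimensions $1,1,3$ at $k=-4,-2,0$; the generator count $1+1+2=4=\rk$ then finishes. Alternatively, your own setup already suffices without any low-weight analysis: since $M$ is free of rank $4$ over $\CC[E_4,E_6]$ (Lemmas \ref{lem:weak-free} and \ref{lem:data}), $H_M(t)-H_N(t)$ is a rational function with denominator $(1-t^4)(1-t^6)$ and nonnegative coefficients, and such a function that vanishes in all degrees $\geq 14$ must vanish identically (a nonzero numerator forces infinitely many nonzero Taylor coefficients); hence $\dim N_k=\dim M_k$ for every $k$ and $N=M$. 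A third repair is a pure generator count: $\dim M_{-2}=2$ would force generator weights $-4,-2,-2,w_4$, which is incompatible with $\dim M_0\geq 3$ (witnessed by $E_4\varphi_{-4,2},\varphi_{0,2},\psi_{0,2}$) and rank $4$. Any of these closes the argument; as written, the proposal does not.
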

\begin{proof}
As we mentioned in the proof of Lemma \ref{lem:weak-free}, for any $\phi_k\in J_{k,\Lambda,2}^{\w, \Co_0}$, the product $\Delta^2 \phi_k$ is a holomorphic Jacobi form of weight $k+24$, and therefore lies in the $\CC[E_4,E_6]$-module generated by $A_2$, $\Phi_{12,2}$, $B_2$ and $HB_2$. In this way, we find that the vector space $J_{k,\Lambda,2}^{\w,\Co_0}$ for $k=-4$, $-2$ and $0$ has dimension $1$, $1$ and $3$ respectively. Thus there are one generator of weight $-4$, one generator of weight $-2$ and two generators of weight $0$. We then finish the proof. 
\end{proof}

We remark that the argument in the proof of Lemma \ref{lem:minimal2} also yields the estimations
$$
\dim J_{-4,\Lambda,2}^{\w,\Co_0} \leq 1, \quad \dim J_{-2,\Lambda,2}^{\w,\Co_0} \leq 2, \quad \dim J_{0,\Lambda,2}^{\w,\Co_0} \leq 3. 
$$
This gives another proof of the above theorem. As far as the authors know, $J_{*,\Lambda,2}^{\w,\Co_0}$ is the first free module of weak Jacobi forms on an irreducible lattice which has multiple generators of weight $0$.

In summary, the generating series of Conway invariant weak Jacobi forms of index $2$ is given by
\begin{equation}
\begin{aligned}
\sum_{k\in\ZZ} \dim J_{k,\Lambda, 2}^{\w,\Co_0}x^k =&\, \frac{x^{-4}+x^{-2}+2}{(1-x^4)(1-x^6)}\\
=&\, x^{-4}+x^{-2}+3
+2 x^2+4 x^4+4 x^6+5 x^8+5 x^{10}+7 x^{12}+6 x^{14}+8 x^{16}\\
&+8 x^{18}+9 x^{20}
+9 x^{22}+11 x^{24}+10 x^{26}+12 x^{28}+12 x^{30}
+O(x^{32}).
\end{aligned}    
\end{equation}

\section{Conway invariant Jacobi forms of index 3}\label{sec:index3}
In this section we prove parts $(3)$ and $(4)$ of Theorem \ref{MTH}. 
\subsection{Conway invariant holomorphic Jacobi forms of index 3}
We first estimate the minimal weight of Conway invariant weak Jacobi forms of index $3$ in a similar way to the case of index $2$. 
\begin{lemma}\label{lem:minimal3}
$$
J_{k,\Lambda,3}^{\w,\Co_0}=\{0\} \quad \text{if $k<-14$}.
$$
\end{lemma}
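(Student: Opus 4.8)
The plan is to mimic the proof of Lemma \ref{lem:minimal2}: set up a linear system for the $q^0$-term via the heat operator $H$ and its reductions at $\mathfrak{z}=0$, but with an extra ingredient forced by a genuine degeneracy that does not occur for index $2$. Since a nonzero weak Jacobi form of minimal weight must have nonvanishing $q^0$-term (otherwise dividing by $\Delta$ gives a nonzero form of weight $k-12$), I would suppose toward a contradiction that there is $\phi\in J_{k,\Lambda,3}^{\w,\Co_0}$ with $k\leq -16$ and $[\phi]_{q^0}\neq 0$; multiplying by a suitable monomial $E_4^aE_6^b$ (which preserves the nonvanishing of the $q^0$-term) reduces to $k\in\{-16,-18\}$. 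Writing
$$
[\phi]_{q^0}=\sum_{r} c_r\,\orb(r),\qquad r\in\{0,2,3,4,5,6a,6b,7,8b,9b\},
$$
I would apply $H$ repeatedly to produce $\phi,H\phi,H^2\phi,\dots$ of weights $k,k+2,\dots,0$. Each form of negative weight reduces to the zero modular form at $\mathfrak{z}=0$, and the weight-$0$ form yields the identity of Lemma \ref{lem:weight-0-identity}; together these give $|k|/2+1$ linear equations in the ten unknowns $c_r$.

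The step I expect to be the main obstacle is that, unlike index $2$ where the four basic orbits have pairwise distinct types, here two basic orbits $O_{6a}$ and $O_{6b}$ share type $6$. Every reduction-at-$\mathfrak{z}=0$ equation depends on an orbit only through the scalar $-(r,r)/(2t)$ by which $H$ acts on its $q^0$-coefficient, so all of these equations treat $O_{6a}$ and $O_{6b}$ identically: the combination $\tfrac{1}{|O_{6a}|}O_{6a}-\tfrac{1}{|O_{6b}|}O_{6b}$ lies in the kernel of the whole system, which therefore has rank at most $9$. I expect a direct calculation to show the rank is exactly $9$ for $k\in\{-16,-18\}$ (for $k=-18$ the nine reductions of distinct degrees already give a Vandermonde-type full-rank system on the nine distinct eigenvalues, and for $k=-16$ the weight-$0$ equation supplies the missing rank). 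Thus the system forces $[\phi]_{q^0}$ to be a scalar multiple of this one degenerate combination, with all other coefficients zero.

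To eliminate the last combination I would pull back along a Leech vector $v$ of type $2$. Then $\phi(\tau,zv)$ is a scalar weak Jacobi form of weight $k$ and index $3(v,v)/2=6$, and since $k\leq -16<-12=-2\cdot 6$ it vanishes identically. Reading off its $q^0$-term gives
$$
\frac{1}{|O_{6a}|}\sum_{\ell\in O_{6a}}e^{2\pi i z(\ell,v)}=\frac{1}{|O_{6b}|}\sum_{\ell\in O_{6b}}e^{2\pi i z(\ell,v)}\quad\text{for all }z,
$$
that is, the normalized distributions of the pairings $(\ell,v)$ over the two orbits coincide; since these distributions differ (a finite check on the orbit data, in the spirit of the pullback computations carried out later in the paper), the remaining coefficient must vanish. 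Hence $[\phi]_{q^0}=0$, contradicting the assumption, and $J_{k,\Lambda,3}^{\w,\Co_0}=0$ for $k\leq -16$, i.e. for $k<-14$. Note that only type-$2$ pullbacks vanish in this weight range (index $9$ and $12$ give the weaker bounds $-18,-24$), so the differential-operator system must do the bulk of the work and the pullback is reserved precisely for separating the degenerate type-$6$ pair, which the reductions at $\mathfrak{z}=0$ alone cannot distinguish.
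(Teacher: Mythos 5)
Your proposal is correct, and its first half coincides with the paper's proof: reduce to $k\in\{-16,-18\}$, build the system of $|k|/2+1$ linear equations from the reductions of $\phi,H\phi,H^2\phi,\dots$ at $\mathfrak{z}=0$ together with the weight-$0$ identity, and observe that these equations detect only the combination $c_{6a}|O_{6a}|+c_{6b}|O_{6b}|$ because $H$ acts on a $q^0$-coefficient only through the type $\frac{1}{2}(r,r)$ (your explanation of \emph{why} the type-$6$ degeneracy is unavoidable is a nice point the paper leaves implicit). Where you genuinely diverge is in disposing of the surviving combination $|O_{6b}|O_{6a}-|O_{6a}|O_{6b}$. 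The paper observes that once $c_7=c_{8b}=c_{9b}=0$, Lemma \ref{lem:coefficients} forces every Fourier coefficient of $\phi_k$ with $2nt-(\ell,\ell)<-12$ to vanish (any such coefficient equals one of $c_7,c_{8b},c_{9b}$), so $\Delta^{2}\phi_k$ is a nonzero \emph{holomorphic} Jacobi form of weight $k+24\leq 8$, contradicting the singular-weight bound of Lemma \ref{Lem:holomorphic}; this needs no input beyond the solved linear system. You instead pull back along $v_2$, invoke the vanishing of scalar weak Jacobi forms of weight $<-2m$ at index $m=6$, and separate $O_{6a}$ from $O_{6b}$ by their intersection distributions with $v_2$ --- which do indeed differ (for instance $O_{6b}$ contains vectors with $(\ell,v_2)=\pm 6$ while $O_{6a}$ does not, as the tables in Appendix \ref{app:orbitinter} confirm; those tables are derived from $A_1$ and $\Phi_{12,2}$ only, so there is no circularity). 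Both routes are sound. The paper's is more self-contained and would generalize to other indices without any orbit-intersection data, while yours isolates a reusable technique --- pullbacks can distinguish orbits of equal type that the heat-operator reductions provably cannot --- at the cost of one extra finite computation and the Eichler--Zagier structure theorem for scalar weak Jacobi forms.
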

\begin{proof}
Suppose that there exists a nonzero $\phi_k\in J_{k,\Lambda,3}^{\w,\Co_0}$ for some $k<-14$ whose $q^0$-term is nonzero. As explained in the proof of Lemma \ref{lem:minimal2}, we only need to consider the cases of $k=-18$ and $-16$. We represent the $q^0$-term of $\phi_k$ as
$$
[\phi_k]_{q^0} = c_0 O_0 + c_2 O_2 + c_3 O_3 + c_4 O_4 + c_5 O_5 + c_{6a}O_{6a} + c_{6b}O_{6b} + c_7 O_7 + c_8 O_{8b} + c_9 O_{9b}, \quad c_i \in \CC.
$$
Again, by applying the differential operators to $\phi_k$ we construct Jacobi forms $H\phi_k$, $H^2\phi_k$, ... of weights $k+2$, $k+4$, .... By taking $\mathfrak{z}=0$ and considering the reductions of weak Jacobi forms of non-positive weights,  we construct a system of $|k|/2 + 1$ linear equations with $10$ unknowns $c_i$. By direct calculation,  we find that the only non-trivial solution of this linear system is
$$
c_{0}, c_2, c_3, c_4, c_5, c_7, c_8, c_9 = 0, \quad c_{6a}|O_{6a}|+c_{6b}|O_{6b}|=0.
$$
Thus the $q^0$-term of $\phi_k$ has the form
$$
\phi_k = |O_{6b}|O_{6a} - |O_{6a}|O_{6b}+O(q).
$$
We notice that $\Delta^2\phi_k$ defines a Conway invariant holomorphic Jacobi form of weight $k+24$ and index $3$. It follows that $k+24\geq 12$, i.e. $k\geq -12$. This contradicts our assumption on the weight $k$. We have thus proved the lemma.
\end{proof}

For calculation purposes, we need to determine the decompositions of $O_2\otimes O_4$ and $O_3\otimes O_3$. The following identity is useful. 

\begin{lemma}
The following identity holds.
\begin{equation}\label{O2O4}
\begin{aligned}
H(A_1B_2)=&A_1HB_2-\frac{E_4}{1326780} \Big(44287  (7 E_4^3+9 E_6^2)A_3+797160 E_4E_6B_3\\
&-1296 A_1 \Phi_{12,2} -3888 \Delta \Phi_{12,3}\Big) +\frac{292}{243}\left(2 E_4^3+E_6^2\right) HB_3 \\
&-\frac{584}{81} E_4 E_6 H^2B_3+\frac{2}{4095} \big(2049 H^2(A_1A_2)-4 H^2(A_1\Phi_{12,2})\big).
\end{aligned}    
\end{equation}
\end{lemma}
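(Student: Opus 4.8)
The plan is to recognize both sides as elements of one finite--dimensional space and to reduce the identity to a bounded comparison of Fourier coefficients. First I would check the bookkeeping: since $A_1\in J_{12,\Lambda,1}^{\Co_0}$ and $B_2\in J_{14,\Lambda,2}^{\Co_0}$, the product $A_1B_2$ has weight $26$ and index $3$, and $H$ raises the weight by $2$ while preserving the index (Lemma \ref{lem:diffoperator}); a parallel count on each summand of the right--hand side shows that every term, and hence the whole expression, lies in $J_{28,\Lambda,3}^{\w,\Co_0}$. By Theorem \ref{MTH}(3) and Lemma \ref{lem:weak-free} this is a finite--dimensional $\CC$--vector space (a free $\CC[E_4,E_6]$--module of rank $10$), so the identity is equivalent to the vanishing of a single form $\Psi\in J_{28,\Lambda,3}^{\w,\Co_0}$, and $\Psi=0$ can be certified by matching the $q^n$--coefficients of both sides for $n$ up to an explicit bound dictated by the weights of the ten generators.

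Next I would compute the two sides coefficient by coefficient. The operators $H$ and $H^2$ act on a Fourier expansion $\sum_{n,r}f(n,r)q^n\orb(r)$ only by rescaling each coefficient through $\bigl(n-\tfrac{(r,r)}{2t}\bigr)$ together with the $E_2$--correction (Lemma \ref{lem:diffoperator}); they never touch the orbits themselves. Consequently the summands built from $A_3$, $B_3$, $\Delta\,\Phi_{12,3}$, $HB_3$ and $H^2B_3$ are completely explicit, because the Fourier expansions of $A_3$, $B_3$, $\Phi_{12,3}$ are recorded in \S\ref{sec:basic results}--\S\ref{sec:phi12} as honest linear combinations of single orbits, and $\Delta$ has index $0$ so $\Delta\,\Phi_{12,3}$ needs no orbit multiplication. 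The genuinely new inputs are the three products $A_1B_2$, $A_1A_2$ and $A_1\Phi_{12,2}$: since $A_1=\Theta_\Lambda$ contributes the full orbit of type $n$ in its $q^n$--term, expanding these products forces the orbit decompositions $O_a\otimes O_b$ with $a,b$ small, the first two of which, $O_2\otimes O_2$ and $O_2\otimes O_3$, are already available from Lemma \ref{lem:O2O2-O2O3}.

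The main obstacle is precisely the pair $O_2\otimes O_4$ and $O_3\otimes O_3$, which \S\ref{sec:phi12} pins down only through the combinations $O_3\otimes O_3+2\,O_2\otimes O_4$ and $O_3\otimes O_4+O_2\otimes O_5$. I expect the specific rational coefficients appearing in \eqref{O2O4} to be engineered so that, after applying $H$ and $H^2$ and forming the stated combination, these undetermined products enter $\Psi$ only through the combinations that \S\ref{sec:phi12} has already fixed; the remaining coefficients of $\Psi$ then involve only known orbit data, and verifying that they vanish up to the required $q$--order is a finite linear--algebra check inside the free module. Alternatively, one may sidestep the orbit products entirely by testing the identity against the reductions of $H^j(\text{both sides})$ at $\mathfrak{z}=0$ for $j=0,1,2,\dots$: reductions of products are products of the known scalar modular reductions of $A_t$, $B_t$, $\Phi_{12,t}$, $\Delta$, and for $\Co_0$--invariant forms the low--degree Taylor data at the origin is radial, hence captured by these heat--operator reductions, which separate the finite--dimensional space. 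Either route reduces the proof to an explicit but bounded computation, and that same computation is what makes \eqref{O2O4} \emph{useful}: equating the Fourier coefficients conversely determines $O_2\otimes O_4$ and $O_3\otimes O_3$ individually.
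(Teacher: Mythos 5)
Your primary route is, in substance, exactly the paper's proof: both sides lie in $J_{28,\Lambda,3}^{\Co_0}$, their Fourier coefficients are matched through the $q^3$-term using the orbit products already determined in Lemma \ref{lem:O2O2-O2O3}, and the difference, divided by $\Delta^4$, is a Conway invariant weak Jacobi form of weight $28-48=-20<-14$ and index $3$, hence zero by Lemma \ref{lem:minimal3}. That is precisely the ``explicit bound dictated by the weights of the generators'' you invoke (the minimal generator weight $-14$ is what makes vanishing to order $q^4$ sufficient), and your guess that the undetermined products $O_2\otimes O_4$ and $O_3\otimes O_3$ must drop out of the $q^{\le 3}$ comparison is also how the verification has to go --- consistently with the fact that the identity is then used at order $q^4$ to solve for $O_2\otimes O_4$.

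The alternative route you offer --- sidestepping the orbit products by testing the identity against the reductions $H^j(\cdot)(\tau,0)$ --- does not work, and the supporting claim that $\Co_0$-invariance makes the low-degree Taylor data at the origin radial is false. The functionals $\phi\mapsto H^j(\phi)(\tau,0)$ do not separate points of $J_{28,\Lambda,3}^{\Co_0}$: the singular-weight generator $\Psi_{12,3}$ satisfies $\Psi_{12,3}(\tau,0)=0$ (each $q^n$-coefficient is a combination $\sum c_*O_*$ with $\sum c_*|O_*|=0$) and $H\Psi_{12,3}=0$, and since $H_k(E_4\varphi)=E_4H_{k-4}(\varphi)-\tfrac{1}{3}E_6\varphi$, every iterate $H^j(E_4^4\Psi_{12,3})$ is a scalar modular form times $\Psi_{12,3}$ and therefore has vanishing reduction at $\mathfrak{z}=0$. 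Thus $E_4^4\Psi_{12,3}$ is a nonzero element of $J_{28,\Lambda,3}^{\Co_0}$ that is invisible to all of these tests, so no collection of heat-operator reductions can certify that your difference form vanishes. Only your first route is viable, and it is the one the paper takes.
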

\begin{proof}
By means of the decomposition of $O_2\otimes O_2$ which was determined in Lemma \ref{lem:O2O2-O2O3}, we are able to calculate the Fourier expansions of the basic forms $A_1A_2$, $A_1\Phi_{12,2}$, $A_1B_2$ and $A_1HB_2$ up to (and including) $q^3$-terms. These data are enough to verify that both sides of \eqref{O2O4} have the same Fourier coefficients up to $q^3$-terms. Therefore, the quotient of their difference by $\Delta^4$ gives a Conway invariant weak Jacobi form of weight $-20$ and index $3$, which has to be zero by Lemma \ref{lem:minimal3}. We then prove the desired identity. 
\end{proof}

\begin{lemma}\label{lem:O2O4-O3O3}
The decompositions of $O_2\otimes O_4$ and $O_3\otimes O_3$ in Appendix \ref{app:orbitprod} hold.
\end{lemma}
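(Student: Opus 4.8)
The goal is to pin down the two product decompositions $O_2 \otimes O_4$ and $O_3 \otimes O_3$ as explicit $\ZZ$-linear combinations of Conway orbits, as recorded in Appendix \ref{app:orbitprod}. The strategy I would follow mirrors the argument already used in Lemma \ref{lem:O2O2-O2O3} and in the proof of identity \eqref{O2O4}: exploit the fact that an identity between Conway invariant weak Jacobi forms of a fixed weight and index which holds up to sufficiently high $q$-order must hold identically, because their difference, after division by a suitable power of $\Delta$, lands in a space that vanishes by the minimal-weight bound of Lemma \ref{lem:minimal3}. The subtlety, flagged in \S\ref{sec:phi12}, is that comparing $\Phi_{12,2}$ against its Fourier--Jacobi expression only determined the \emph{combinations} $O_3\otimes O_3 + 2\,O_2\otimes O_4$ and $O_3\otimes O_4 + O_2\otimes O_5$, not the individual products. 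So the essential new input must come from somewhere else.

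\textbf{Key steps.} First I would use the identity \eqref{O2O4} just established. Its left-hand side $H(A_1B_2)$ has a Fourier expansion whose $q$-coefficients are governed by the heat operator $\mathcal{H}$ acting on $A_1B_2$; computing $[A_1B_2]_{q^n}$ requires knowing the products $O_2 \otimes O_k$ appearing there, and in particular the $q^2$-coefficient of $A_1B_2$ forces the appearance of $O_2\otimes O_4$. Meanwhile every form on the right-hand side of \eqref{O2O4} is built from the known basic forms $A_3$, $B_3$, $HB_3$, $H^2B_3$, $A_1\Phi_{12,2}$, $\Delta\Phi_{12,3}$, $H^2(A_1A_2)$, $H^2(A_1\Phi_{12,2})$, whose Fourier expansions I can read off from the already-determined orbit data and from the decompositions of Lemma \ref{lem:O2O2-O2O3}. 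Matching the two sides coefficient-by-coefficient in the relevant $q$-order then yields a linear relation that isolates $O_2\otimes O_4$ uniquely. Once $O_2\otimes O_4$ is known, the partially-determined combination $O_3\otimes O_3 + 2\,O_2\otimes O_4$ from \eqref{c2Jacobi}--\eqref{c2order8} immediately gives $O_3\otimes O_3$ by subtraction. Throughout, the coefficients are constrained to be nonnegative integers (a product of two orbits is a genuine nonnegative combination of orbits), which both cross-checks the arithmetic and resolves any residual ambiguity when a linear system is underdetermined over $\CC$ but has a unique nonnegative integral solution.

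\textbf{Main obstacle.} The hard part is bookkeeping rather than conceptual: to carry out the coefficient comparison in \eqref{O2O4} I need the Fourier expansions of all the auxiliary forms to high enough $q$-order, and this in turn may require intermediate orbit products beyond those in Lemma \ref{lem:O2O2-O2O3}, whose decompositions are themselves only partially pinned down. The delicate point is to ensure that \eqref{O2O4} genuinely \emph{separates} $O_2\otimes O_4$ from $O_3\otimes O_3$, rather than re-deriving only the combination $O_3\otimes O_3 + 2\,O_2\otimes O_4$ that the $\Phi_{12,2}$ comparison already gave; this is exactly why an independent identity such as \eqref{O2O4}, which brings in index-$3$ data through $A_3$, $B_3$, and $\Phi_{12,3}$, is needed. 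I expect that verifying the matching of Fourier coefficients up to the order stated in the proof of \eqref{O2O4} (through $q^3$), together with the nonnegative-integrality constraint, suffices to determine both products uniquely, and the remainder is a finite—if lengthy—computation whose output is recorded in Appendix \ref{app:orbitprod}.
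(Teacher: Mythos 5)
Your proposal follows essentially the same route as the paper: restrict the ansatz to the orbits occurring in the already-known combination $O_3\otimes O_3+2\,O_2\otimes O_4$, extract linear constraints from the identity \eqref{O2O4} (the paper uses its $q^4$-term, where $O_2\otimes O_4$ but not $O_3\otimes O_3$ enters via $A_1A_2$, $A_1B_2$, $A_1HB_2$), and resolve the residual underdetermined coefficients by nonnegative integrality — the paper additionally invokes the cardinality identity $\lvert O_2\otimes O_4\rvert=\lvert O_2\rvert\cdot\lvert O_4\rvert$ to close the system before applying that constraint, after which $O_3\otimes O_3$ follows by subtraction exactly as you describe.
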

\begin{proof}
As mentioned in \S \ref{sec:phi12}, we can determine the product decomposition
\begin{equation}\label{OO33twoOO24}
\begin{aligned}
O_{3}\otimes O_3+2O_{2}\otimes O_4=&\,16773120 + 1140156 O_{2} + 354800 O_{3} + 97152 O_{4} + 
 22356 O_{5}\\
 &+ 4094 O_{6 a} + 
 4598 O_{6 b} + 552 O_{7}+ 48 O_{8 b} + 2 O_{9 a}    + 2 O_{9 b}\\
 &+ 2 O_{10 b} + 2 O_{10 c}+ O_{12 a}.    
\end{aligned}    
\end{equation}
The $q^4$-terms of $A_1A_2$, $A_1B_2$ and $A_1HB_2$ contain $O_{2}\otimes O_4$, but not $O_{3}\otimes O_3$. To determine the decomposition of $O_{2}\otimes O_4$, one first needs to make an ansatz. From the triangle inequality, we see
\begin{equation}\label{pqbound}
O_p\otimes O_q=\sum c_{n*}O_{n*},\qquad \frac{1}{2}(\sqrt{2p}-\sqrt{2q})^2\le n \le \frac{1}{2}(\sqrt{2p}+\sqrt{2q})^2.   
\end{equation}
Then $O_2\otimes O_4 $ at most contains orbits of type from 2 to 11, while $
O_3\otimes O_3 $ at most contains orbits of type from 0 to 12.
In fact, we can do better knowing only the orbits in $O_{3}\otimes O_3+2O_{2}\otimes O_4$ can appear in the ansatz. In summary, we have the following ansatz:
\begin{equation}\label{OO2433}
O_2\otimes O_4=\sum x_* O_{*} ,\qquad
O_3\otimes O_3= 16773120 +O_{12a}+\sum y_* O_{*}.    
\end{equation}
The above sums take over orbits $O_{2}, O_{3} , O_{4} ,O_{5}
,O_{6 a} , O_{6 b} , O_{7}, O_{8 b}, O_{9 a}  , O_{9 b}, O_{10 b}, O_{10 c}$.
Substitute the above ansatz for $O_2\otimes O_4$ into $A_1A_2$, $A_1B_2$, $A_1HB_2$, then the $q^4$-term of \eqref{O2O4} solves
\begin{equation}
x_2= 93150, x_3 = 48600, x_4 = 16192, x_5= 4050, x_{6a}= 759,
x_{6b}= 891,  x_7= 100, x_{8b}= 8.    
\end{equation}
It remains to determine $x_{9a},x_{9b},x_{10b},x_{10c}$. Using $|O_2\otimes O_4|=|O_2|\times |O_4|$, we can solve
\begin{equation}
x_{10c}=\frac{26175}{2944}-\frac{ 200 }{23}x_{9a}-\frac{5600 }{243}x_{9b}-\frac{25 }{128}x_{10b}.    
\end{equation}
As all $x_*$ and $y_*$ need to be non-negative integers, \eqref{OO33twoOO24} and \eqref{OO2433} require that $x_{9a},x_{9b},x_{10b},x_{10c}$ can only be 0 or 1. It is then easy to find that the only solution is
\begin{equation}
x_{9a}=1,\; x_{9b}=0,\; x_{10b}=1,\; x_{10c}=0.    
\end{equation}
This solves the decomposition of $O_2\otimes O_4$, and by extension $O_3\otimes O_3$.
\end{proof}

Thanks to the decompositions of $O_2\otimes O_2$, $O_2\otimes O_3$ and $O_2\otimes O_4$, we are able to calculate the Fourier expansions of $A_1A_2$, $A_1\Phi_{12,2}$, $A_1B_2$ and $A_1HB_2$ up to (and including) $q^4$-terms. This allows
us to construct a new Conway invariant holomorphic Jacobi form of weight $12$ and index $3$, and prove that this new form  generates $J_{12,\Lambda,3}^{\Co_0}$ together with $A_3$ and $\Phi_{12,3}$. 

\begin{lemma}
The following function lies in the space $J_{12,\Lambda,3}^{\Co_0}$.
\begin{equation}\label{psi123}
\begin{aligned}
\Psi_{12,3}=&\,84909\Phi_{12,3}+\frac{1}{3888\Delta}\Big(42576 (486 \psi_{24,3} + 33215 ( E_4 E_6B_3 - 6 E_4^2 HB_3 + 12 E_6 H^2B_3) )\\
  &+ 10345968 A_1 (17 \Phi_{12,2}-2049 A_2) + 
  44287 A_3 (18153 E_4^3 + 10231 E_6^2)
  \Big)\\
=&\,(48384O_{6b}-644O_{6a})q^2+(-644O_{9a}+243 O_{9b})q^3+ 
 q^4 (-644 O_{12 c} + 48384 O_{12 d}\\
 & - 644 O_{12 e} + 
    243 O_{12 f})+ 
 q^5 (-644 O_{15 b} + 48384 O_{15 c} + 243 O_{15 d} - 
    644 O_{15 e})+O(q^6),
\end{aligned}    
\end{equation}
where
\begin{equation}
\psi_{24,3}=\frac{1}{E_4} \Big(8190 A_1 HB_2 + 2049 H^2(A_1A_2) - 16 H^2(A_1\Phi_{12,2})   \Big).
\end{equation}
\end{lemma}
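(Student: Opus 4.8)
The plan is to show first that $\Psi_{12,3}$ is a well-defined element of $J^{\w,\Co_0}_{*,\Lambda,3}$ of weight $12$, and then to upgrade ``weak'' to ``holomorphic'' by a finite Fourier-coefficient check. By counting weights and indices, every term in the defining formula ($\Phi_{12,3}$, the products $E_4E_6B_3$, $E_4^2HB_3$, $E_6H^2B_3$, $A_1\Phi_{12,2}$, $A_1A_2$, $A_3E_4^3$, $A_3E_6^2$, and $\psi_{24,3}$) is a Conway invariant Jacobi form of index $3$ which, after the division by $\Delta$, has weight $12$; so the only real content of ``$\Psi_{12,3}$ is a weak Jacobi form'' is the legitimacy of the two divisions. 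First I would treat $\psi_{24,3}$: since $E_4=1+O(q)$ is invertible in $\CC[[q]]$, the quotient of the weight-$28$ numerator $8190A_1HB_2+2049H^2(A_1A_2)-16H^2(A_1\Phi_{12,2})$ by $E_4$ automatically has a Fourier expansion with non-negative $q$-powers; the genuine point is that it be holomorphic on all of $\HH\times(\Lambda\otimes\CC)$, i.e.\ that the numerator be divisible by $E_4$ as a Jacobi form. I would check this through the theta decomposition, reducing divisibility by the scalar form $E_4$ to the vanishing of the vector-valued numerator along $\tau=\rho$, the zero of $E_4$.

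Next I would handle the outer division by $\Delta$. Writing $\Psi_{12,3}=84909\,\Phi_{12,3}+N/(3888\Delta)$ with $N$ the weight-$24$ index-$3$ numerator, and recalling that $\Delta$ is nowhere zero on $\HH$, the quotient $N/\Delta$ is automatically holomorphic on $\HH\times(\Lambda\otimes\CC)$; hence $N/\Delta$ is a weak Jacobi form as soon as its expansion contains no negative power of $q$, which amounts to the single condition $[N]_{q^0}=0$. This is a finite computation with the already-known $q^0$-terms of the building blocks. Granting it, $\Psi_{12,3}\in J^{\w,\Co_0}_{12,\Lambda,3}$.

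For holomorphicity I would use the criterion recorded in the proof of Lemma~\ref{lem:weak-holo}: a weak Jacobi form of index $3$ is holomorphic if and only if the coefficients $f(n,v)$ vanish for every basic orbit $v\in(\mathcal{S}_3/\Co_0)/3\Lambda$ and every $0\leq n<\frac{1}{6}(v,v)$. By Lemma~\ref{lem:data} these basic orbits are $O_0,O_2,O_3,O_4,O_5,O_{6a},O_{6b},O_7,O_{8b},O_{9b}$, and the $\delta_3=19$ resulting constraints involve only the $q^0$-, $q^1$- and $q^2$-terms, the three $q^2$-constraints being exactly the vanishing of the coefficients of $O_7$, $O_{8b}$, $O_{9b}$. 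Thus it suffices to compute $\Psi_{12,3}$ through $q^2$ and to read off that its $q^0$- and $q^1$-terms vanish while its $q^2$-term involves only the type-$6$ orbits $O_{6a}$, $O_{6b}$; then all $19$ obstructions vanish and $\Psi_{12,3}\in J^{\Co_0}_{12,\Lambda,3}$. Carrying the same computation through $q^5$ yields the displayed expansion.

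The main obstacle is the Fourier-coefficient bookkeeping. Although the type-$6$ orbits in the $q^2$-term originate from the elementary blocks $\Phi_{12,3}$, $A_3$ and $B_3$, the factor $1/\Delta=q^{-1}(1+24q+\cdots)$ shifts $q$-orders, so even computing $\Psi_{12,3}$ through $q^2$ requires $N$ through $q^3$, hence the products $A_1A_2$, $A_1\Phi_{12,2}$ and $A_1HB_2$ through $q^3$; at that order the decomposition of $O_2\otimes O_2$ from Lemma~\ref{lem:O2O2-O2O3} already enters, and the full expansion through $q^5$ additionally needs $O_2\otimes O_3$, $O_2\otimes O_4$ and $O_3\otimes O_3$ from Lemmas~\ref{lem:O2O2-O2O3} and \ref{lem:O2O4-O3O3}. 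The other delicate point is the $E_4$-divisibility of the numerator of $\psi_{24,3}$: unlike divisibility by $\Delta$, it is not detected at the cusp and must be established by vanishing at the interior point $\tau=\rho$.
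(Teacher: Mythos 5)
Your overall architecture is sound: every building block is indeed a Conway invariant Jacobi form of index $3$, the outer division by $\Delta$ only requires killing the $q^0$-term of the numerator, and your accounting of the $\delta_3=19$ obstructions (concentrated in the $q^0$-, $q^1$- and $q^2$-terms, with the $q^2$-constraints being the coefficients of $O_7$, $O_{8b}$, $O_{9b}$) is a correct and clean way to pass from weak to holomorphic. You have also correctly isolated the crux of the lemma, namely the divisibility of the weight-$28$ numerator $8190A_1HB_2+2049H^2(A_1A_2)-16H^2(A_1\Phi_{12,2})$ by $E_4$.

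The gap is that your proposed method for that crux does not terminate in a verifiable computation. Reducing $E_4$-divisibility to the vanishing of the theta-decomposition components at the interior point $\tau=\rho$ is a correct \emph{criterion}, but you give no way to certify that a vector-valued modular form of weight $16$ vanishes at $\rho$: this is not detected by any finite set of Fourier coefficients, and you acknowledge the difficulty without resolving it. The paper circumvents the interior-point evaluation entirely: it exhibits an explicit, manifestly holomorphic Jacobi form $\psi_{30,3}$ of weight $30$ and proves the identity $E_6\psi_{24,3}=\psi_{30,3}$ by checking that $E_6E_4\psi_{24,3}-E_4\psi_{30,3}=O(q^5)$, so that dividing by $\Delta^5$ produces a weak form of weight $-26<-14$ which must vanish by Lemma \ref{lem:minimal3}. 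Since $E_4$ and $E_6$ have no common zeros on $\HH$, the identity $E_6\cdot(E_4\psi_{24,3})=E_4\psi_{30,3}$ forces the numerator to vanish on the zero divisor of $E_4$, so $\psi_{24,3}$ is holomorphic. This converts the interior-point condition into a finite Fourier-coefficient check plus the low-weight vanishing theorem — exactly the kind of effective argument your proposal is missing. To repair your proof you would need either this $E_6$-multiplication trick or some other finitely checkable certificate of vanishing at $\rho$.
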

\begin{proof}
We first calculate the Fourier coefficients of $\Psi_{12,3}$ up to $q^3$-terms by definition. If $\Psi_{12,3}$ has no poles, then we conclude from its Fourier expansion that $\Psi_{12,3}$ is a holomorphic Jacobi form. We now prove that $\Psi_{12,3}$ has no poles. This is equivalent to show that $\psi_{24,3}$ is holomorphic, which follows from the following identity
\begin{equation*}
\begin{aligned}
E_6\psi_{24,3}=&-\frac{A_1}{24}  (59421 E_6A_2+151515 E_4B_2+424 E_6\Phi_{12,2})-\frac{44287}{314928}A_3 (85041 E_4^3 E_6+19583 E_6^3)\\
&-\frac{33215}{13122} B_3 \left(2199 E_4^4+4340 E_4 E_6^2\right)-\frac{66430}{2187} H^2B_3 \left(3249 E_4^3+3290 E_6^2\right)-53 \Delta E_6\Phi_{12,3}\\
&-55323 H^3(A_1A_2)+\frac{217192885}{2187} E_4^2 E_6 HB_3+\frac{34833}{2} E_4
H(A_1A_2).    
\end{aligned}
\end{equation*}
We explain why the above identity holds. Let $\psi_{30,3}$ denote the right hand side of this identity. We check that $E_6E_4\psi_{24,3}-E_4\psi_{30,3}=O(q^5)$. Thus the quotient by $\Delta^5$ defines a Conway invariant weak Jacobi form of weight $-26$ and index $3$, which has to be zero by Lemma \ref{lem:minimal3}. This proves our claim, and thus proves the lemma. To calculate the $q^4$- and $q^5$-terms of $\Psi_{12,3}$, we solve the linear equations (defined by the reductions of $q^4$- and $q^5$-terms to $\mathfrak{z}=0$)
$$
\sum c_* |O_{12*}| = 0 \quad \text{and} \quad \sum c_* |O_{15*}| = 0.
$$
These coefficients have to be $0$, $-644$, $48384$ or $243$, as the orbits of types $12$ and $15$ are conjugate to $O_3$, $O_{6a}$, $O_{6b}$ or $O_{9b}$ modulo $3\Lambda$. 
\end{proof}

\begin{lemma}\label{lem:singular3}
The space $J_{12,\Lambda,3}^{\Co_0}$ is generated by $A_3$, $\Phi_{12,3}$ and $\Psi_{12,3}$. 
\end{lemma}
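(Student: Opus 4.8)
The plan is to show that $J_{12,\Lambda,3}^{\Co_0}$ has dimension exactly $3$, and that the three exhibited forms $A_3$, $\Phi_{12,3}$, $\Psi_{12,3}$ are linearly independent, so that they span the space. Linear independence is the easy half: I would inspect the leading Fourier data. The forms $A_3$ and $\Phi_{12,3}$ have nonzero $q^0$-terms (namely $1$ and $-252$), whereas $\Psi_{12,3}$ begins at $q^2$ with the nonzero term $(48384 O_{6b}-644 O_{6a})q^2$. Hence any linear combination $\alpha A_3 + \beta \Phi_{12,3} + \gamma \Psi_{12,3}$ that vanishes must first have $\gamma=0$ (looking at the $q^2 O_{6b}$-coefficient, since neither $A_3$ nor $\Phi_{12,3}$ contains $O_{6b}$ at $q^2$ — indeed $\Phi_{12,3}$ has $-O_{6b}q^2$ but $A_3$ has $+O_{6b}q^2$, so one checks the precise combination), and then $\alpha=\beta=0$ by comparing the $q^0$- and $q^1$-terms. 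So the three forms are $\CC$-linearly independent.

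The substantive part is the upper bound $\dim J_{12,\Lambda,3}^{\Co_0}\le 3$. The natural approach mirrors the proof of Lemma \ref{lem:singular2} for index $2$. Suppose $\dim J_{12,\Lambda,3}^{\Co_0}>3$. By Lemma \ref{lem:data}, the basic Conway orbits of index $3$ appearing in the $q^0$-term are $O_0, O_2, O_3, O_4, O_5, O_{6a}, O_{6b}, O_7, O_{8b}, O_{9b}$. Using $A_3$ and $\Phi_{12,3}$ I can cancel the $q^0$- and (via a further combination) the low-order terms of a putative fourth form, and using $\Psi_{12,3}$ I can remove one more leading term. The goal is to produce, by subtracting suitable combinations of the three known forms, a nonzero holomorphic Jacobi form $\phi$ of singular weight whose Fourier expansion begins at some order $q^n$. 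Since $\phi$ has singular weight $12$, Lemma \ref{Lem:holomorphic} forces its $q^n$-term to be supported on Leech vectors of type $n$, i.e. with $(\ell,\ell)=2\cdot 3\cdot n = 6n$; and $\phi/\Delta^3$ is a weak Jacobi form of weight $-24$ and index $3$, so $\phi(\tau,0)=0$. I would then exploit the constraint $\phi(\tau,0)=0$ together with the classification of orbits modulo $3\Lambda$ to derive a contradiction, as in Lemma \ref{lem:singular2}.

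The main obstacle will be organizing the bookkeeping of which orbits can appear in the leading term of the hypothetical extra form, and showing that the reduction-to-zero constraint $\phi(\tau,0)=0$ cannot be met nontrivially. Concretely, after cancelling against $A_3$, $\Phi_{12,3}$ and $\Psi_{12,3}$, the leading $q^n$-term of $\phi$ is a $\CC$-linear combination of basic Conway orbits of type $3n$ (up to conjugacy modulo $3\Lambda$), and the vanishing of $\phi(\tau,0)$ imposes the single weighted relation $\sum c_* |O_*|=0$ on its coefficients. I expect that a careful dimension count — combining Lemma \ref{lem:weak-holo}, which controls $\dim J^{\w,\Co_0}_{k,\Lambda,3}-\dim J^{\Co_0}_{k,\Lambda,3}$, with the rank-$10$ freeness of Lemma \ref{lem:weak-free} — will pin the dimension of the weight-$12$ slice to exactly $3$, rather than a direct ad hoc elimination. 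In practice I would run the reduction argument: any element of $J_{12,\Lambda,3}^{\Co_0}$ is determined modulo the span of the three generators by a tail starting at $q^{\ge 2}$, and the holomorphicity together with the singular-weight support condition leaves no room for a fourth independent form. The delicate point is verifying that the relevant small linear systems (governing the $q^2$- and $q^3$-coefficients) have only the solutions accounted for by $A_3$, $\Phi_{12,3}$, $\Psi_{12,3}$; this is where the explicit Fourier data computed above is indispensable.
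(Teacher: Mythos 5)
Your proposal follows essentially the same route as the paper's proof: assume a fourth independent form, cancel its $q^0$-, $q^1$- and $q^2$-terms against $A_3$, $\Phi_{12,3}$ and $\Psi_{12,3}$, and use the singular-weight support condition (Lemma \ref{Lem:holomorphic}) together with $\phi(\tau,0)=0$ to force the remaining leading term, necessarily a multiple of $q^3 O_{9b}$, to vanish. The one detail you leave implicit is that the reduction constraint $a|O_{6a}|+b|O_{6b}|=0$ pins the $q^2$-term to the line spanned by $48384\,O_{6b}-644\,O_{6a}$, which is exactly what makes the cancellation against $\Psi_{12,3}$ possible.
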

\begin{proof}
Suppose $\dim J_{12,\Lambda,3}^{\Co_0} > 3$. Similar to Lemma \ref{lem:singular2}, by means of $A_3$, $\Phi_{12,3}$ and $\Psi_{12,3}$, we can cancel the $q^0$-, $q^1$- and $q^2$-terms of the fourth Jacobi form  of singular weight and index $3$ denoted $\phi$. Then its Fourier expansion takes the form $q^3O_{9b}+O(q^4)$, which contradicts $\phi(\tau,0)=0$. 
\end{proof}

We further construct a form $\Psi_{14,3}\in J_{14,\Lambda,3}^{\Co_0}$ which is linearly independent of $B_3$, and a form $\Psi_{16,3}\in J_{16,\Lambda,3}^{\Co_0}$ which is linearly independent of $E_4A_3,E_4\Phi_{12,3},E_4\Psi_{12,3}$, $HB_3$ and $H\Psi_{14,3}$.
\begin{equation}
\begin{aligned}
\Psi_{14,3}=&\,\frac{1}{1109777760\Delta}\Big(14 E_4 (E_4 (99645 B_3 E_4 + 88574 A_3 E_6)+597870 (2 E_4 H^2B_3\!-\!E_6 HB_3))\\
&-1990170 A_1B_2+729 (4098 H(A_1A_2)-H(A_1\Phi_{12,2}))\Big)\\
=&\,1+\frac{1}{6850480}\Big(2 (41102880-3102 O_{2}+29 O_{3})q+(246617280-43428 O_{2}\phantom{iiiiiiiiiiiiiii}\\
&+696 O_{3}-948 O_{4}-129 O_{5}-5 O_{6b}-11 O_{6a})q^2+(82205760-49632 O_{2}\\
&+2088 O_{3}-1896 O_{4}-903 O_{5}-60 O_{6b}-132 O_{6a}-462 O_{7}-6204 O_{8a}\\
&-129 O_{8c}-156 O_{8b}-11 O_{9a}-14 O_{9b})q^3\Big)+O(q^4).
\end{aligned}    
\end{equation}
\begin{equation*}
\begin{aligned}
\phantom{iiiiii}\Psi_{16,3}=&\,2660529 E_4\Phi_{12,3}+\frac{1}{3888\Delta}\Big(-69984 \big(1825659 H^2(A_1A_2)+123364 H^2(A_1\Phi_{12,2})\big)\\
&\,+1594320 \big(278111  E_4^2 E_6B_3-6  (174161 E_4^3+103950 E_6^2)HB_3+3337332 E_4 E_6 H^2B_3\big)\\
&\,-3888  {A_1} (107980251  E_4{A_2}-1239877 \Phi_{12,2} E_4-1360490040 {HB_2})\\
&\,+44287 E_4A_3 (3734541 E_4^3+5165011  E_6^2)\Big)\\
=&\,616(3974400 O_{2} - 186300 O_{3} + 11776 O_{4} - 675 O_{5} + 
 1728 O_{6 b}
)q^2\\
&\,+(9792921600 O_{2}- 1032847200 O_{3} + 101556224 O_{4} - 
 7068600 O_{5} - 3060288 O_{6 a} \\
&\,+ 242694144 O_{6 b}  - 30800 O_{7}+ 21504 O_{8 b}- 415800 O_{8 c} - 2673 O_{9 b} 
)q^3+O(q^4).
\end{aligned}    
\end{equation*}

\begin{lemma}\label{lem:weight-14}
The space $J_{14,\Lambda,3}^{\Co_0}$ is generated by $B_3$ and $\Psi_{14,3}$. \end{lemma}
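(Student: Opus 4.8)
The plan is to prove that $\dim_{\CC} J_{14,\Lambda,3}^{\Co_0}=2$; since $B_3$ and $\Psi_{14,3}$ already lie in this space, their linear independence will then force them to be a basis. For the lower bound I would first record that both forms restrict at $\mathfrak{z}=0$ to the unique normalized $E_{14}=E_4^2E_6$: indeed $B_3(\tau,0)=E_{14}$ is built in, while $\Psi_{14,3}(\tau,0)\in M_{14}(\SL_2(\ZZ))=\CC E_{14}$ has constant term $1$, hence equals $E_{14}$. Therefore $B_3-\Psi_{14,3}$ has vanishing restriction, and since by Lemma \ref{Lem:holomorphic} the $q^0$-term of a holomorphic form is the constant $f(0,0)\,\orb(0)$ equal to the constant term of the restriction, we get $B_3-\Psi_{14,3}=O(q)$. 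Its displayed expansion is nonzero, so $B_3$ and $\Psi_{14,3}$ are independent and $\dim J_{14,\Lambda,3}^{\Co_0}\ge 2$.

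For the upper bound I would use the restriction map $\mathrm{red}\colon J_{14,\Lambda,3}^{\Co_0}\to M_{14}(\SL_2(\ZZ))$, $\varphi\mapsto\varphi(\tau,0)$. Its target is one-dimensional and $B_3$ maps onto a generator, so $\mathrm{red}$ is surjective and it suffices to prove $\dim\ker(\mathrm{red})\le 1$, i.e. that $B_3-\Psi_{14,3}$ spans the kernel. Every $\varphi\in\ker(\mathrm{red})$ is $O(q)$, so $\varphi/\Delta$ is a Conway invariant weak Jacobi form of weight $2$ and index $3$; conversely, if $\psi\in J_{2,\Lambda,3}^{\w,\Co_0}$ and $\Delta\psi$ is holomorphic then $(\Delta\psi)(\tau,0)=\Delta\cdot\psi(\tau,0)=0$ because $\psi(\tau,0)\in M_2(\SL_2(\ZZ))=0$, so $\Delta\psi\in\ker(\mathrm{red})$. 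Thus $\ker(\mathrm{red})$ is exactly the subspace of weight-$2$ weak forms whose product with $\Delta$ is holomorphic; the holomorphy condition, read through Lemma \ref{lem:coefficients} and Lemma \ref{Lem:holomorphic}, forces the $q^0$-term of $\varphi/\Delta$ to be supported on the orbits $O_0,O_2,O_3$ of type $\le 3$ and more generally kills the polar coefficients.

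The cleanest way to finish, and the route I would take, is to pass to the weak side and invoke Lemma \ref{lem:weak-holo}: for even $k\ge 14$ one has $\dim J_{k,\Lambda,3}^{\w,\Co_0}-\dim J_{k,\Lambda,3}^{\Co_0}=\delta_3=19$, the value $\delta_3$ being supplied by Lemma \ref{lem:data}. Hence the assertion is equivalent to $\dim J_{14,\Lambda,3}^{\w,\Co_0}=21$. The weak module is free of rank $10$ over $\CC[E_4,E_6]$ with no generators below weight $-14$ (Lemmas \ref{lem:weak-free} and \ref{lem:minimal3}), and the differential-operator/linear-algebra method used in the proof of Lemma \ref{lem:minimal3}, applied at each non-positive weight together with explicit constructions, pins down the generator weights (all $\le 0$); summing $\sum_i\dim M_{14-a_i}(\SL_2(\ZZ))$ over those weights yields $21$, whence $\dim J_{14,\Lambda,3}^{\Co_0}=2$.

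The main obstacle is exactly the step just sketched: controlling the interior, non-theta-leading Fourier coefficients of a weight-$14$ form, equivalently computing $\dim J_{14,\Lambda,3}^{\w,\Co_0}$. A naive attempt to mimic the singular-weight argument of Lemma \ref{lem:singular3} — cancel the low-order $q$-terms with the forms in hand and divide by a power of $\Delta$ to contradict Lemma \ref{lem:minimal3} — breaks down here, because weight $14$ is not singular: the only holomorphic weight-$14$ forms at our disposal are $\CC$-combinations of the very generators we wish to count, since no new ones arise by multiplication ($M_2(\SL_2(\ZZ))=0$) nor by the heat operator (which annihilates the singular-weight generators $A_3,\Phi_{12,3},\Psi_{12,3}$), and already at order $q^2$ the number of free interior coefficients of a kernel element exceeds what two forms can cancel. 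It is precisely this modularity-versus-support bookkeeping that the obstruction principle behind Lemma \ref{lem:weak-holo} resolves, so I would lean on that rather than on bare coefficient cancellation.
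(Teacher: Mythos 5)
Your reduction via Lemma \ref{lem:weak-holo} is sound as far as it goes: since $\delta_3=19$ (Lemma \ref{lem:data}), the claim is indeed equivalent to $\dim J_{14,\Lambda,3}^{\w,\Co_0}=21$, and your lower bound (linear independence of $B_3$ and $\Psi_{14,3}$) is fine. The gap is that you never actually establish $\dim J_{14,\Lambda,3}^{\w,\Co_0}\le 21$. That number equals $\sum_i\dim M_{14-a_i}(\SL_2(\ZZ))$ only once the full multiset of generator weights $\{-14,-12,-12,-12,-10,-8,-6,-4,-2,0\}$ of the weak module is known --- but that is Theorem \ref{3weak}, which the paper proves \emph{after} and \emph{from} the ten holomorphic generators, hence from the very lemma you are proving (the explicit weak generators $\varphi_{-14,3}$, $\phi_{-12,3}$, etc.\ are all written as $\Delta^{-3}$ times combinations of $B_3,\Psi_{14,3},\Psi_{16,3},\dots$). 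Invoking it here is circular. As an independent route, the differential-operator linear systems of Lemma \ref{lem:minimal3} only give \emph{upper} bounds on the dimension of admissible $q^0$-terms at each negative weight, with no a priori guarantee that they are attained; you would still need matching explicit constructions at weights $-12,-10,\dots,0$ (equivalently, exact dimension counts of achievable $q^0$-terms at weights $2$ and $-10$), and none of that is carried out in the proposal.

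The paper's argument avoids all of this. It assumes $\dim J_{14,\Lambda,3}^{\Co_0}\ge 3$, deduces $\dim J_{20,\Lambda,3}^{\Co_0}\ge 9$ from the six independent weight-$16$ forms $E_4A_3$, $E_4\Phi_{12,3}$, $E_4\Psi_{12,3}$, $\Psi_{16,3}$, $HB_3$, $H\Psi_{14,3}$ multiplied by $E_4$ together with $E_6\cdot J_{14,\Lambda,3}^{\Co_0}$, and then cancels enough low-order Fourier coefficients to produce a nonzero $\phi=q^2(c_0+c_2O_2+c_3O_3)+O(q^3)$. Dividing by $\Delta^2$ gives $\psi\in J_{-4,\Lambda,3}^{\w,\Co_0}$, and the three reductions of $\psi$, $H\psi$, $H^2\psi$ at $\mathfrak{z}=0$ force $c_0=c_2=c_3=0$; a further division by $\Delta$ then contradicts Lemma \ref{lem:minimal3}. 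If you want to keep your framework, you must either supply the full weak-module computation independently of the holomorphic generators, or switch to a contradiction argument of this type.
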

\begin{proof}
We know that $E_4A_3, E_4\Phi_{12,3}$, $E_4\Psi_{12,3}$, $\Psi_{16,3}$, $HB_3$ and $H\Psi_{14,3}$ are linearly independent. Suppose $\dim J_{14,\Lambda,3}^{\Co_0}\geq 3$. Then $\dim J_{20,\Lambda,3}^{\Co_0}\geq 9$. 
Therefore, there exists a nonzero $\phi\in J_{20,\Lambda,3}^{\Co_0}$ whose Fourier expansion takes the form 
$$
\phi= q^2(c_0+c_2 O_2+c_3 O_3)+O(q^3).
$$
This gives a weak Jacobi form
$$
\psi:= \Delta^{-2}\phi= c_0+c_2 O_2+c_3 O_3 + O(q) \in J_{-4,\Lambda,3}^{\w,\Co_0}.
$$
We apply the differential operators to construct $H\psi$ and $H^2\psi$. The reductions to $\mathfrak{z}=0$ of these two Jacobi forms together with $\psi$ yields a system of $3$ linear equations with $3$ unknowns $c_0$, $c_2$ and $c_3$. We find that this system has only trivial solution. Thus $\psi/\Delta$ gives a nonzero Conway invariant weak Jacobi form of weight $-16$ and index $3$, which contradicts Lemma \ref{lem:minimal3}. 
\end{proof}

\begin{theorem}
The free $\CC[E_4,E_6]$-module $J_{*,\Lambda,3}^{\Co_0}$ is generated by ten forms 
$$
A_3,\; \Phi_{12,3},\; \Psi_{12,3},\; B_3,\; \Psi_{14,3},\; HB_3,\; H\Psi_{14,3},\; \Psi_{16,3},\; H^2B_3,\; H^2\Psi_{14,3}
$$
which have weights $12$, $12$, $12$, $14$, $14$, $16$, $16$, $16$, $18$, $18$ respectively. 
\end{theorem}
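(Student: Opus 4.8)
The plan is to bypass any high-weight computation by combining the freeness of the module with graded Nakayama's lemma, so that only \emph{lower} bounds on the number of generators in each weight are required; the rank constraint then closes the argument. Write $R:=M_*(\SL_2(\ZZ))=\CC[E_4,E_6]$, let $R_+=(E_4,E_6)$ be its irrelevant ideal, and set $M:=J_{*,\Lambda,3}^{\Co_0}$. By Lemma~\ref{lem:weak-free} together with Lemma~\ref{lem:data}, $M$ is a free graded $R$-module of rank $10$, so $M/R_+M$ is a graded $\CC$-vector space of total dimension $10$, and for every weight $k$ the integer $g_k:=\dim_\CC(M/R_+M)_k$ equals the number of weight-$k$ elements in any minimal homogeneous generating set, with $\sum_k g_k=10$. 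The vanishing of odd-weight forms and Lemma~\ref{Lem:holomorphic} (which gives $M_k=0$ for $k<12$) force $g_k=0$ unless $k$ is even and $k\geq 12$. By graded Nakayama it thus suffices to exhibit homogeneous forms at weights $12,14,16,18$, ten in total, whose images in $M/R_+M$ are linearly independent.

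I would carry out the independence check weight by weight, using repeatedly that $M_k=0$ for $k<12$. At weight $12$ one has $(R_+M)_{12}=E_4M_8+E_6M_6=0$, so $g_{12}=\dim M_{12}=3$, realized by $A_3,\Phi_{12,3},\Psi_{12,3}$ (Lemma~\ref{lem:singular3}). At weight $14$, $(R_+M)_{14}=E_4M_{10}+E_6M_8=0$, so $g_{14}=\dim M_{14}=2$, realized by $B_3,\Psi_{14,3}$ (Lemma~\ref{lem:weight-14}). At weight $16$, $(R_+M)_{16}=E_4M_{12}=\langle E_4A_3,E_4\Phi_{12,3},E_4\Psi_{12,3}\rangle$; the linear independence of the six forms $E_4A_3,E_4\Phi_{12,3},E_4\Psi_{12,3},HB_3,H\Psi_{14,3},\Psi_{16,3}$ was recorded in the proof of Lemma~\ref{lem:weight-14}, which exactly says that $HB_3,H\Psi_{14,3},\Psi_{16,3}$ are independent modulo $(R_+M)_{16}$, giving $g_{16}\geq 3$. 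At weight $18$, since $M_{10}=M_8=0$, one computes $(R_+M)_{18}=E_4M_{14}+E_6M_{12}=\langle E_4B_3,E_4\Psi_{14,3},E_6A_3,E_6\Phi_{12,3},E_6\Psi_{12,3}\rangle$, a space of dimension $5$ (the two summands meet only in $0$ because $E_4,E_6$ are coprime and $M$ is free); I would then verify that $H^2B_3$ and $H^2\Psi_{14,3}$ are independent modulo this five-dimensional space, yielding $g_{18}\geq 2$.

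These lower bounds already sum to $3+2+3+2=10=\sum_k g_k$, so every inequality is forced to be an equality: $g_{12}=3$, $g_{14}=2$, $g_{16}=3$, $g_{18}=2$, and $g_k=0$ for all remaining $k$. Consequently the ten exhibited forms have images that form a basis of $M/R_+M\cong\CC^{10}$, so by graded Nakayama they generate $M$; being $\mathrm{rank}_R(M)=10$ in number, they constitute a free basis. Note that $HB_3,H\Psi_{14,3},H^2B_3,H^2\Psi_{14,3}$ are genuinely holomorphic because the heat operator underlying $H$ (Lemma~\ref{lem:diffoperator}) preserves the support condition defining holomorphic Jacobi forms.

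The whole argument reduces to the two independence statements at weights $16$ and $18$, and this is where the real work lies: each is a finite assertion about Fourier coefficients, but to write out the $q$-expansions of $HB_3,H\Psi_{14,3},\Psi_{16,3}$ and especially of $H^2B_3,H^2\Psi_{14,3}$ far enough to see the independence, one must express products of Conway orbits as linear combinations of orbits, i.e.\ invoke the decompositions of $O_2\otimes O_2$, $O_2\otimes O_3$, $O_2\otimes O_4$, $O_3\otimes O_3$ from Lemmas~\ref{lem:O2O2-O2O3} and~\ref{lem:O2O4-O3O3}. I expect the weight-$18$ separation to be the main obstacle, since one must distinguish $H^2B_3,H^2\Psi_{14,3}$ from the larger five-dimensional space $(R_+M)_{18}$. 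Should a direct Fourier comparison there become unwieldy, I would instead pin down the exact values $\dim J_{16,\Lambda,3}^{\Co_0}=6$ and $\dim J_{18,\Lambda,3}^{\Co_0}=7$ by the differential-operator dimension argument of Lemma~\ref{lem:weight-14}: assume an excess form, apply $H$ and reductions at $\mathfrak{z}=0$, divide by a suitable power of $\Delta$, and contradict the vanishing $J_{k,\Lambda,3}^{\w,\Co_0}=\{0\}$ for $k<-14$ of Lemma~\ref{lem:minimal3}; this yields the same generator counts and hence the same conclusion.
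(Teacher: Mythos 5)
Your argument is correct and is essentially the paper's own proof: both fix the generator counts at weights $12$ and $14$ via Lemmas \ref{lem:singular3} and \ref{lem:weight-14}, exhibit three forms at weight $16$ and two at weight $18$ that are linearly independent of $E_4\cdot(\text{weight }12)$ resp.\ $E_4\cdot(\text{weight }14)+E_6\cdot(\text{weight }12)$, and conclude by matching the total against the rank $10$ from Lemma \ref{lem:data}. Your graded-Nakayama packaging is just a cleaner formalization of the paper's ``we have found all ten generators'' step, and you correctly identify that the real content is the two Fourier-expansion independence checks, which the paper likewise only asserts.
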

\begin{proof}
We know from Lemma \ref{lem:data} that this free module has ten generators. By Lemma \ref{lem:singular3} and Lemma \ref{lem:weight-14}, there are exactly three generators of weight $12$ and two generators of weight $14$. Since $\Psi_{16,3}$, $HB_3$ and $H\Psi_{14,3}$ are linearly independent with $E_4A_3, E_4\Phi_{12,3}$ and $E_4\Psi_{12,3}$, they have to be generators of weight $16$. We also verify that $H^2B_3$ and $H^2\Psi_{14,3}$ are linearly independent of $E_6A_3, E_6\Phi_{12,3}$, $E_6\Psi_{12,3}$, $E_4B_3$ and $E_4\Psi_{14,3}$, so they are generators of weight $18$. We have found all ten generators. This completes the proof of the theorem. 
\end{proof}

\subsection{Conway invariant weak Jacobi forms of index 3}
We now construct Conway invariant weak Jacobi forms of index $3$ in terms of the ten holomorphic generators. For the lowest weight weak Jacobi form $\varphi_{-14,3}$, we normalize the constant term in its $q^0$-term to be $1$.
\begin{equation}
\begin{aligned}
\varphi_{-14,3}=&\,\frac{1}{8692012800\Delta^3}\Big(\!-\!\frac{E_4E_6}{4881764160} (6737646323408 A_3+333264204 \Phi_{12,3}+62607 \Psi_{12,3})\!\!\\
&-\frac{E_4^2}{36} (324193 B_3+25407 \Psi_{14,3})+ \frac{E_6}{5765760} (3 \Psi_{16,3}-26961014080 HB_3\\
&+32854902080 H\Psi_{14,3})+{E_4}  (35843 H^2B_3+166557 H^2\Psi_{14,3}) \Big),\\
=&\,\Big(1-\frac{O_{2}}{8190}+\frac{O_{3}}{199680}-\frac{O_{4}}{3159000}+\frac{O_{5}}{55111680}-\frac{O_{7}}{5208053760}\\
&\,+\frac{O_{8b}}{38362896000}-\frac{O_{9b}}{540094464000}\Big)+O(q).
\end{aligned}    
\end{equation}
We remark that this $q^0$-term is consistent with the unique solution of the linear system built in a similar way to Lemma \ref{lem:minimal3} for $k=-14$. For weight $-12$, two obvious weak Jacobi forms are 
\begin{equation}
\begin{aligned}
H\varphi_{-14,3}=&\,\Big(\frac{13}{6}-\frac{O_{2}}{5460}+\frac{7 O_{3}}{1198080}-\frac{O_{4}}{3790800}+\frac{O_{5}}{110223360}+\frac{O_{7}}{31248322560}\\
&-\frac{O_{8 b}}{76725792000}+\frac{O_{9 b}}{648113356800}\Big)+O(q),
\end{aligned}    
\end{equation}
and
\begin{equation}
\begin{aligned}
\psi_{-12,3} = \frac{\Psi_{12,3}}{\Delta^2}=(48384O_{6b}-644O_{6a})+O(q).
\end{aligned}    
\end{equation}
We construct the third independent weak Jacobi form of weight $-12$ such that its $q^0$-term does not contain $O_0$ and $O_{6a}$:
\begin{equation}
\begin{aligned}
\phi_{-12,3}= &\,\frac{1}{579467520\Delta^3}\Big(\frac{E_4^3}{3973760}(16703964459728 A_3+277004364 \Phi_{12,3}+178351 \Psi_{12,3})\\
& - \frac{E_6^2}{35763840}(  202943145966736A_3+  25403868\Phi_{12,3}+  2307555\Psi_{12,3})\\
&-91E_4E_6 (248711 B_3 + 27289 \Psi_{14,3})+ \frac{E_4^2}{3080} (79754795120 HB_3\\
&+54331156880 H\Psi_{14,3} + 3 \Psi_{16,3} )+364E_6 (1682869 H^2B_3-8469 H^2\Psi_{14,3})\Big) \\
=&\,\Big(O_{2}-\frac{19 O_{3}}{256}+\frac{17 O_{4}}{2025}-\frac{25 O_{5}}{23552}+\frac{13 O_{6 b}}{1380}\!-\frac{11 O_{7}}{953856}+\frac{O_{8 b}}{1366200}\!-\frac{O_{9 b}}{32972800}\Big)\!+\!O(q).
\end{aligned}    
\end{equation}

\begin{theorem}\label{3weak}
The free $\CC[E_4,E_6]$-module $J_{*,\Lambda,3}^{\w, \Co_0}$ is generated by ten forms 
$$
\varphi_{-14,3}, \quad \phi_{-12,3}, \quad \psi_{-12,3}, \quad H^d\varphi_{-14,3}, \quad 1\leq d\leq 7
$$
whose weights are respectively $-14$, $-12$, $-12$, $-14+2d$, for $1\leq d\leq 7$. 
\end{theorem}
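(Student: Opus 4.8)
The plan is to combine three facts already established: that $J_{*,\Lambda,3}^{\w,\Co_0}$ is a free $\CC[E_4,E_6]$-module of rank $10$ (Lemma~\ref{lem:data}), that its minimal weight is $-14$ (Lemma~\ref{lem:minimal3}, together with the nonvanishing of $\varphi_{-14,3}$), and that the holomorphic module $J_{*,\Lambda,3}^{\Co_0}$ is explicitly generated by the ten forms of the previous theorem, with Fourier expansions known through low $q$-order. Since the module is free of rank $10$, it suffices to show that the ten listed forms, which have precisely the stated weights, constitute a free basis; this in turn splits into determining the generator weights and then checking $\CC[E_4,E_6]$-linear independence.

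First I would pin down the generator weights by computing $\dim J_{k,\Lambda,3}^{\w,\Co_0}$ for $k=-14,-12,\dots,0$. For homogeneous $\phi\in J_{k,\Lambda,3}^{\w,\Co_0}$ the product $\Delta^{3}\phi$ is a holomorphic Jacobi form of weight $k+36$: the largest type occurring in $\mathcal{S}_3$ is $9$, so $\tfrac{1}{2\cdot 3}\max\{(v,v):v\in\mathcal{S}_3\}=3$, which is exactly the divisibility bound in the proof of Lemma~\ref{lem:weak-free}. Conversely any $\Phi\in J_{k+36,\Lambda,3}^{\Co_0}$ with $\mathrm{ord}_q\Phi\ge 3$ yields a weak form $\Phi/\Delta^{3}$, so $\dim J_{k,\Lambda,3}^{\w,\Co_0}=\dim\{\Phi\in J_{k+36,\Lambda,3}^{\Co_0}:\mathrm{ord}_q\Phi\ge 3\}$. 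The right-hand side is a finite linear-algebra computation using the ten holomorphic generators and their $H$-images (multiplying a Jacobi form by $E_4,E_6$ only rescales $q$-coefficients, so no orbit products enter here). I expect this to return $\dim J_k^{\w,\Co_0}=1,3,2,5,\dots$ for $k=-14,-12,-10,-8,\dots$, which, after the inclusion--exclusion with $(1-x^4)(1-x^6)$, yields exactly one new generator at each of the weights $-14,-10,-8,-6,-4,-2,0$ and three at weight $-12$. Alternatively these upper bounds can be obtained by the linear-system argument of Lemma~\ref{lem:minimal3}, applying $H$ from Lemma~\ref{lem:diffoperator} repeatedly, reducing the negative-weight iterates at $\mathfrak{z}=0$, and invoking Lemma~\ref{lem:weight-0-identity} for the weight-$0$ iterate; the matching lower bounds are supplied by the explicit constructions.

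It then remains to verify $\CC[E_4,E_6]$-linear independence of the ten forms. Since their weights now agree with the generator weights, the transition matrix $A$ to any genuine homogeneous basis is square with $\det A$ homogeneous of weight $0$, hence a constant equal to the determinant of the matrix of $q^0$-terms (because $E_4,E_6\equiv 1 \bmod q$ act trivially on $q^0$-terms, while the $q^0$-terms of a basis are independent by Lemma~\ref{lem:weak-free}). Thus it suffices to show the ten $q^0$-terms are $\CC$-linearly independent among $O_0,O_2,O_3,O_4,O_5,O_{6a},O_{6b},O_7,O_{8b},O_{9b}$. The operator $H$ multiplies the coefficient of $O_x$ (type $x$) in the $q^0$-term of a weight-$k$ form by $\tfrac{12-k}{12}-\tfrac{x}{3}$, so the $q^0$-term of $H^{d}\varphi_{-14,3}$ is that of $\varphi_{-14,3}$ with each $O_x$-coefficient scaled by $6^{-d}\prod_{j=0}^{d-1}(13-2x-j)$. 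The eight orbits appearing in $\varphi_{-14,3}$ all have nonzero coefficients and pairwise distinct types $x\in\{0,2,3,4,5,7,8,9\}$, so these scalings form a polynomial Vandermonde system; hence the $q^0$-terms of $\varphi_{-14,3},H\varphi_{-14,3},\dots,H^{7}\varphi_{-14,3}$ are independent and span the eight-dimensional subspace omitting $O_{6a},O_{6b}$. The two forms $\phi_{-12,3},\psi_{-12,3}$ are built so that their projections onto $\langle O_{6a},O_{6b}\rangle$ are independent (the relevant $2\times 2$ determinant is nonzero, since $\psi_{-12,3}\equiv -644\,O_{6a}+48384\,O_{6b}$ there while $\phi_{-12,3}$ contributes only $O_{6b}$), giving the two missing directions. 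All ten $q^0$-terms are therefore independent, and the ten forms are a free basis.

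The main obstacle I anticipate is the weight-determination step: computing the exact dimensions $\dim J_k^{\w,\Co_0}$ (equivalently the Hilbert-series numerator) is what forces the generator weights to be $-14,-12,-12,-12,-10,-8,-6,-4,-2,0$, and this cannot be skipped, because linear independence of the $q^0$-terms alone does not imply generation (for instance $E_4\varphi_{-14,3}$ shares the $q^0$-term of $\varphi_{-14,3}$ yet sits in a higher weight and fails to generate). The dimension count via $\Delta^{3}$ into the holomorphic module, though mechanical, is thus the indispensable and most laborious ingredient; once the weights are fixed, the $q^0$-term independence check above is routine.
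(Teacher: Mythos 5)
Your proposal is correct and follows essentially the same route as the paper: the dimension count via $\phi\mapsto\Delta^3\phi$ into the explicitly known holomorphic module $J_{*,\Lambda,3}^{\Co_0}$ fixes the generator weights, and generation is then reduced to the $\CC$-linear independence of the ten $q^0$-terms. Your explicit Vandermonde argument for the $q^0$-terms of $H^d\varphi_{-14,3}$, together with the separation of $\phi_{-12,3}$ and $\psi_{-12,3}$ via the type-$6$ orbits, is a spelled-out version of the independence check that the paper records only as ``easy to verify.''
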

\begin{proof}
For any $\phi_k\in J_{k,\Lambda,3}^{\w, \Co_0}$ we have $\Delta^3\phi_k\in J_{36+k,\Lambda,3}^{\Co_0}$. We note that the Fourier expansions of ten holomorphic generators can be calculated up to (and including) $q^3$-terms. These data are sufficient to determine the subspace
$$
\mathcal{J}_k:=\{ f\in J_{36+k,\Lambda,3}^{\Co_0}: f=O(q^3)\}
$$
which is isomorphic to $J_{k,\Lambda,3}^{\w, \Co_0}$ via the map $f\mapsto f/\Delta^3$. Therefore, for any even $-14\leq k \leq 0$ we can find a basis of $J_{k,\Lambda,3}^{\w, \Co_0}$ with known $q^0$-terms. We then pick up the ten weak generators from these bases and determine their weights. Notice the $q^0$-term of $\varphi_{-14,3}$ does not contain orbits of type $6$, then the $q^0$-terms of all $H^d\varphi_{-14,3}$ do not contain orbits of type $6$. On the other hand, the $q^0$-terms of both $\phi_{-12,3}$ and $\psi_{-12,3}$ do contain orbits of type $6$ and are linearly independent. Then it is easy to verify that the $q^0$-terms of the ten forms are linearly independent over $\CC$. Therefore, the ten forms are indeed generators. This concludes the proof.
\end{proof}

In summary, the generating series of Conway invariant weak Jacobi forms of index $3$ is given by
\begin{equation}
\begin{aligned}
\sum_{k\in\ZZ} \dim J_{k,\Lambda, 3}^{\w,\Co_0}x^k =&\, \frac{x^{-14}+3x^{-12}+{x^{-10}}+x^{-8}+x^{-6}+x^{-4}+x^{-2}+1}{(1-x^4)(1-x^6)}\\
=&\, x^{-14}+3x^{-12}+2x^{-10}+5x^{-8}+6x^{-6}+7x^{-4}+9x^{-2} +12+11 x^2\\
&+15 x^4+16 x^6+17 x^8+19 x^{10}+22 x^{12}+21 x^{14}+25 x^{16}+26 x^{18}\\
&+27 x^{20} +29 x^{22}+32 x^{24}+31 x^{26}+35 x^{28}+36 x^{30}+O(x^{32}).
\end{aligned}    
\end{equation}

By Theorem \ref{3weak}, the two-dimensional vector space of Conway invariant weak Jacobi forms of weight $-10$ and index $3$ are spanned by $E_4\varphi_{-14,3}$ and $H^2\varphi_{-14,3}$. Thus $H\phi_{-12,3}$ should be linearly dependent of them. Indeed, we find
\begin{equation}
H\phi_{-12,3}=4680H^2\varphi_{-14,3}  -20280E_4\varphi_{-14,3}.    
\end{equation}

\section{Applications}\label{sec:applications}
\subsection{Modular linear differential equations}
Modular forms on $\SL_2(\ZZ)$ satisfy some modular linear differential equations (MLDEs) composed of Serre derivatives. As a generalization, Jacobi forms can satisfy MLDEs composed of differential operators introduced in Lemma \ref{lem:diffoperator}. The MLDEs for holomorphic Jacobi forms are particularly interesting.
For example, all holomorphic Jacobi forms $\phi$ of singular weight (including $A_t,\Phi_{12,t}$ and $\Psi_{12,3}$) satisfy the first-order MLDE
\begin{equation}
H\phi=0.    
\end{equation}

We further study MLDEs for holomorphic Jacobi forms of non-singular weight. 
We first conclude that $B_2\in J_{14,\Lambda,2}^{\Co_0}$ satisfies the following second-order MLDE
\begin{equation}\label{MLDEB2}
 \Big[H^{2}-\frac{E_4}{18}\Big]B_2=0.   
\end{equation}
We also find that the two Conway invariant holomorphic Jacobi forms of weight $14$ and index $3$, that is, $B_3$ and $\Psi_{14,3}$ satisfy the same third-order MLDE
\begin{equation}\label{MLDEB3}
\Big[H^3-\frac{E_4}{12}H+\frac{E_6}{72}\Big]\phi=0.    
\end{equation}
For any even positive-definite unimodular lattice $L$ and any fixed index $t\ge 2$, we can define the similar holomorphic Jacobi form $B_t$ of weight $\frac{1}{2}\mathrm{rank}(L)+2$ and index $t$ for $L$ which is invariant under the orthogonal group of $L$. We remark that the MLDE of $B_t$ does not depend on the lattice $L$. In fact, the MLDE of $B_t$ is completely determined by the MLDE of the modular form of weight $2$ involved in the definition of $B_t$. Indeed, Sakai's Jacobi forms $B_2$ and $B_3$ for root lattice $E_8$ (see \cite{Sak17}) satisfy the same MLDEs as \eqref{MLDEB2} and \eqref{MLDEB3} respectively.

The Conway invariant holomorphic Jacobi form $\Psi_{16,3}$ of weight $16$ and index $3$ satisfies the following fifth-order MLDE 
\begin{equation}
 \Big[H^{5}-\frac{5E_4}{12}H^{3}+\frac{5E_6}{24}H^{2}-\frac{5E_4^2}{144}H\Big]\Psi_{16,3}=0.   
\end{equation}
This MLDE is integrable. In fact, we find
\begin{equation}
\Big[H^{4}-\frac{5E_4}{12}H^{2}+\frac{5E_6}{72}H\Big]\Psi_{16,3}=\frac{88\Delta}{887} \big({85562484 (252 A_3+\Phi_{12,3})-3359 \Psi_{12,3}}\big).    
\end{equation}
Due to the singular weight property and the equality $H(\Delta \phi)=\Delta H(\phi)$, the $H$-image of the above right hand side is indeed zero.

The MLDEs for the products of two Jacobi forms usually have high orders. For example, we find that
$A_1\Phi_{12,2}$ satisfies the following MLDE of order ten
\begin{equation}
\begin{aligned}
&\Big[H^{10}+\mu_1E_4H^{8}+\mu_2E_6H^{7}+\mu_3E_4^2H^{6}+\mu_4E_4E_6 H^{5}+ (\mu_5E_6^2  +\mu_6 E_4^3)H^{4} \\
&+ \mu_7 E_4^2 E_6 H^{3}+ 
 (\mu_8 E_4 E_6^2  + \mu_9 E_4^4 )H^{2} +  (\mu_{10}E_6^3  +\mu_{11} E_4^3 E_6)H\Big](A_1\Phi_{12,2})=0
\end{aligned}    
\end{equation}
where
\begin{equation}
\begin{aligned}
&\mu_1=-\frac{25}{6},\quad \mu_{2}= \frac{235}{36},\quad\mu_{3}= -\frac{35}{16},\quad\mu_{4}= -\frac{805}{144},\quad\mu_{5}= \frac{7325}{1728},\quad\mu_{6}= \frac{1075}{216},\\
&\mu_{7}= -\frac{23275}{3456},\quad \mu_{8}= \frac{2525}{1728},\quad \mu_{9}= \frac{1475}{1296},\quad \mu_{10}= -\frac{25}{432},\quad \mu_{11}= -\frac{11675}{31104}.
\end{aligned}    
\end{equation}
To prove this MLDE, it is sufficient to verify the Fourier expansion up to $q^4$-term. We remark that the MLDEs for $A_1A_2$, $A_1B_2$ and $A_1HB_2$ have orders $13$, $14$ and $15$. In the next subsection, we will also encounter systems of MLDEs, which we also call modular linear relations.

\subsection{The product decomposition of Conway orbits}
In previous sections, we have determined the decompositions of $O_2\otimes O_2$, $O_2\otimes O_3$, $O_2\otimes O_4$ and $O_3\otimes O_3$. We now use the technique of Conway invariant Jacobi forms of index $3$ to determine more product decompositions of Conway orbits. As we mentioned in \S \ref{sec:phi12}, the $q^6$-term of $\Phi_{12,2}$ enables us to determine 
\begin{equation}
\begin{aligned}
O_{3}\otimes O_4+O_{2}\otimes O_5=&\,4194304 O_{2} + 1612875 O_{3} + 565248 O_{4} + 177400 O_{5}+ 48576 O_{6 a}\\
&+ 47104 O_{6 b} + 
 11177 O_{7} + 2048 O_{8 b}+ 2300 O_{8 c} + 276 O_{9 a}\\
 &+ 276 O_{9 b} + 24 O_{10 a} + 
 O_{11 a}    + 
 O_{11 b} + O_{12 b}  + O_{12 c}.
\end{aligned}    
\end{equation}
This allows us to make ansatz
\begin{equation}\label{OO2534}
O_2\otimes O_5=\sum_{O\in S_1} x_* O_{*} ,\qquad
O_3\otimes O_4= \sum_{O\in S_1} y_* O_{*},  
\end{equation}
where $S_1$ is the set of 16 orbits 
$$
O_{2}, O_{3} , O_{4} ,O_{5}
,O_{6 a} , O_{6 b} , O_{7}, O_{8 b}, O_{8 c}, O_{9 a}  , O_{9 b}, O_{10 a}, O_{11 a}, O_{11 b}, O_{12 b}, O_{12 c}.
$$
On the other hand,
the $q^5$-terms of $A_1A_2$, $A_1B_2$, $A_1HB_2$ and $A_1\Phi_{12,2}$ contain orbit products $O_{2}\otimes O_5$, $O_{2}\otimes O_{6a}$ and $O_{2}\otimes O_{6b}$. We also need to make ansatz for the last two of them. 
From bound (\ref{pqbound}), we know $O_2\otimes O_6 $ at most contains orbits of type from $2$ to $14$. 
Furthermore, the $q^7$-term of $\Phi_{12,2}$ enables us to determine the decomposition of $O_4\otimes O_4+2O_3\otimes O_5+2O_2\otimes (O_{6a}+O_{6b})$. 
From these two constraints we obtain the following most general ansatz
\begin{equation}
O_2\otimes O_{6a}=\sum_{O\in S_2} z^a_* O_{*} ,\qquad
O_2\otimes O_{6b}= \sum_{O\in S_2} z^b_* O_{*},    
\end{equation}
where $S_2$ is the set of $26$ orbits 
$O_{2}, O_{3}, O_{4}, O_{5}, O_{6 a}, O_{6 b}, O_{7}, O_{
 8 a}, O_{8 b}, O_{8 c}, O_{9 a}, O_{9 b}, O_{10 a}, O_{10 b}, O_{
 10 c}$, $O_{11 a}, O_{11 b}, O_{12 d}, O_{12 e}, O_{
 12 f}, O_{13 a}, O_{13 b}, O_{13 c}, O_{14 c}, O_{14 d}, O_{
 14 e}.$
Now we can compute $A_1A_2$, $A_1B_2$, $A_1HB_2$ and $A_1\Phi_{12,2}$ up to (and including) $q^5$-terms with $68$ parameters $x_*,z^a_*,z^b_*$ involved. We consider the modular linear relations among the following holomorphic Jacobi forms
\begin{equation}
 A_3,\Phi_{12,3},\Psi_{12,3},H^iB_3,H^i(A_1A_2),H^i(A_1B_2),H^i(A_1HB_2),H^i(A_1\Phi_{12,2}).   
\end{equation}
Due to MLDE \eqref{MLDEB3}, for $H^iB_3$ we only need to consider $i=0,1,2$.
Two modular linear relations at weight 28 have been given in \eqref{O2O4} and \eqref{psi123}. We find five more modular linear relations at weight $30$, among which we display two examples here
\begin{equation}\label{weight30one}
\begin{aligned}
0=&\ 4098 E_6A_1 A_2+E_4(5915 A_1 B_2 - 3415 H(A_1A_2) + 52 H(A_1\Phi_{12,2}))\\
&+8196 H^3(A_1A_2)- 64 H^3(A_1\Phi_{12,2}) - 8190 H^2(A_1B_2)  + 49140 H(A_1HB_2),
\end{aligned}    
\end{equation}
\begin{equation}\label{weight30two}
\begin{aligned}
0=&\,44287 E_6A_3 (16695 E_4^3 + 11689 E_6^2)-574776 A_1 (61425 E_4B_2 + 2 E_6 (14343 A_2 + 65 \Phi_{12,2}))\\
&-1296 \Delta E_6 (223767 \Phi_{12,3} + \Psi_{12,3})+21288 (66430E_4 E_6^2 B_3  - 81 E_4 (10245 H(A_1A_2)\\
&+ 64 H(A_1\Phi_{12,2})) - 
   398580 E_4^2 E_6 HB_3 + 2730 (2187 H(A_1HB_2) + 292 E_6^2 H^2B_3)).
\end{aligned}    
\end{equation}
These identities are established by checking their Fourier expansions up to $q^3$-terms. 
Then the $q^5$-terms of all these seven modular linear relations give many constraints and together determine $60$ of the $68$ parameters, but left with $8$ ones of the large orbits $x_{11a},x_{11b},z^a_{13 a},z^a_{13 b},z^a_{13 c},z^b_{14 c},z^b_{14 d},z^b_{14 e}$. It turns out that all these $8$ left parameters can be uniquely fixed by the condition $|O\otimes O|=|O|\times|O|$ and the requirement that all $x_*,z^a_*,z^b_*$ are non-negative integers. For example, $|O_2\otimes O_5|=|O_2|\times|O_5|$ implies 
$$
1273079808000x_{11a} + 25779866112000 x_{11b}=1273079808000, \quad \text{i.e.} \quad 4x_{11a} +81x_{11b}=4.
$$
Clearly, the only non-negative integer solution is $x_{11a}=1,x_{11b}=0$. By this process, we completely determine the product decompositions of $O_{2}\otimes O_5$, $O_{2}\otimes O_{6a}$, $O_{2}\otimes O_{6b}$ and by extension $O_3\otimes O_4$. The results are collected in Appendix \ref{app:orbitprod}.

Furthermore, as we mentioned in \S \ref{sec:phi12}, the $q^4$-term of $\Phi_{12,3}$ enables us to find
\begin{equation}
\begin{aligned}
O_{2}\otimes( 2 O_5-O_{6b}+O_{8a})=&\,89609 O_{2} + 151800 O_{3} + 93196 O_{4} + 
 38350 O_{5}  + 11868 O_{6 a}\\
 &+ 9484 O_{6 b} + 2804 O_{7}- 4600 O_{8 a} + 484 O_{8 b}+ 276 O_{8 c}+ 
 23 O_{9 a} \\
 &+ 
 66 O_{9 b} + 3 O_{10 a}- 43 O_{10 b} + 3 O_{12 b}  - 3 O_{12 d}+O_{18a}.
\end{aligned}    
\end{equation}
This allows us to also fix the decomposition of $O_2\otimes O_{8a}$, which is presented in Appendix \ref{app:orbitprod}.

In conclusion, we have proved the decompositions for nine products of Conway orbits which are formulated in Appendix \ref{app:orbitprod}.

To determine the $q^5$-terms of holormorphic generators $\Psi_{14,3}$ and $\Psi_{16,3}$ and more orbit products decomposition, we need to compute $A_3,B_3,A_1A_2,A_1\Phi_{12,2},A_1B_2,A_1HB_2$ to $q^6$-terms. This of course cannot be achieved completely due to the lack of classification on Conway orbits of types $17$ and $18$. However, we can introduce two pseudo-orbits $O_{17',49597544448p},O_{18b',93053764668p}$ besides $O_{18a}$, in the sense that we do not distinguish the orbits of type $17$ and the orbits of type $18$ except $O_{18a}$. This is legitimate for our current purpose, since the orbits of the same type are uniformly transformed under the differential operators $H$. Therefore, our following computations and results are rigorous for all orbits of type smaller than $17$ and also $O_{18a}$. Note that the two pseudo-orbits appearing in the orbit product decomposition do not necessarily have integer coefficients. 

With the two pseudo-orbits, we can compute $A_3$ and $B_3$ up to their $q^6$-terms.
The $q^6$-terms of $A_1A_2$, $A_1\Phi_{12,2}$, $A_1B_2,A_1HB_2$ involve the orbit products $O_2\otimes O_{7},$ $O_2\otimes O_{8b},$ $O_2\otimes O_{8c},$ $O_3\otimes O_{5},$ $O_3\otimes O_{6a}$, $O_3\otimes O_{6b}$ and $O_4\otimes O_4$. To write ansatz for the decompositions of these orbit products, we first make use of the Fourier expansion of $\Phi_{12,2}$. The $q^7$-term of $\Phi_{12,2}$ gives us
\begin{equation}\label{eq:reducible01}
\begin{aligned}
&O_4\otimes O_4+2O_3\otimes O_5\\
=&\,398034000+45742400 O_{2}+20165112 O_{3}+8269144 O_{4}\!+3135550 O_{5}+1087440 O_{6 a}\qquad\\
&+1095120 O_{6 b}+339000 O_{7}+187358 O_{8 a}+92614 O_{8 b}+89606 O_{8 c}+21298 O_{9 a}\\
&+21384 O_{9 b}+3940 O_{10 a}+4512 O_{10 b}+4400 O_{10 c}+548 O_{11 a}+536 O_{11 b}+42 O_{12 d}\\
&+46 O_{12 e}+48 O_{12 f}+2 O_{13 b}+2 O_{13 c}+2 O_{14 d}+2 O_{14 e}+O_{16 a}.
\end{aligned}    
\end{equation}
This enables us to make ansatz for $O_4\otimes O_4$ and $O_3\otimes O_5$ using the orbits appearing in the right hand side of the above decomposition.
The $q^8$-term of $\Phi_{12,2}$ gives us
\begin{equation}\label{eq:reducible02}
\begin{aligned}
&O_4\otimes O_5+O_2\otimes O_7+O_3\otimes O_{6a}+O_3\otimes O_{6b}\\
=&\,99532800 O_{2} + 48897678 O_{3} + 22892544 O_{4} + 
 10131156 O_{5}  + 4200352 O_{6 a}+ 4194304 O_{6 b}\\
 &+ 1612875 O_{7}+ 
 565248 O_{8 b} + 570078 O_{8 c}+
 177399 O_{9 a}+ 177399 O_{9 b}  + 48576 O_{10 a} \\
 &+ 47104 O_{10 b}+ 47104 O_{10 c}+ 11178 O_{11 a} + 
 11178 O_{11 b} + 2300 O_{12 b} + 
 2300 O_{12 c} + 
 2048 O_{12 d}\\
& + 2048 O_{12 e} + 
 2048 O_{12 f} + 
 276 O_{13 a} + 276 O_{13 b}+ 276 O_{13 c}   + 24 O_{14 a}+ 24 O_{14 b} + O_{15 a} \\
 &+ 
 O_{15 b}    + O_{15 c}  + O_{15 d} + O_{15 e}+ O_{16 b} + O_{16 c}.
\end{aligned}    
\end{equation}
This enables us to make ansatz for $O_2\otimes O_7$, $O_3\otimes O_{6a}$ and $O_3\otimes O_{6b}$. Note that
the $q^5$-term of $\Phi_{12,3}$ relates $O_2\otimes O_7$ and $O_3\otimes O_{6b}$ by
\begin{equation}\label{eq:reducible03}
\begin{aligned}
&O_2\otimes O_7-O_3\otimes O_{6b}\\
=&-64722 O_{3} + 32481 O_{5} + 30912 O_{6 a}+ 
 35840 O_{6 b}  + 17875 O_{7}
 + 7680 O_{8 b} + 7128 O_{8 c}\\
 &+ 
 2300 O_{9 a}+ 2664 O_{9 b}+ 660 O_{10 a} + 704 O_{10 c}
 + 130 O_{11 b} - 
 275 O_{12 b}+ 
 23 O_{12 c}+ 24 O_{12 f}\\
 &- 23 O_{13 a} + O_{13 b}    + O_{13 c} + O_{15 a}- O_{15 c}.
\end{aligned}    
\end{equation}
For $O_2\otimes O_{8b}$ and $O_2\otimes O_{8c}$, by bound \eqref{pqbound} they at most contain orbits of type from 2 to 18. 
With the ansatz for all these seven orbit products, we can finally compute the $q^6$-terms of $A_1A_2,A_1\Phi_{12,2},A_1B_2$ and $A_1HB_2$ with parameters.  

Consider the $q^6$-terms of the two weight $28$ modular linear relations \eqref{O2O4} and \eqref{psi123} and the five weight $30$ modular linear relations including \eqref{weight30one} and \eqref{weight30two}. In these relations, when $\Phi_{12,3}$ and $\Psi_{12,3}$ appear, they are always multiplied by $\Delta$ (e.g. see \eqref{weight30two}). 
Therefore, we can compute the Fourier expansion of each monomial in these relations up to $O(q^7)$. Then the $q^6$-terms of all the modular linear relations give a large number of constraints on the ansatz. Together with the requirement on the non-negative integrity of the decomposition coefficients, we are able to determine the following combinations of orbit products:
\begin{equation}\label{eq:reducible04}
\begin{aligned}
&O_4\otimes O_4 + O_3\otimes O_{6a} + O_2\otimes O_{8b}\\
=&\,398034000 + 36936000 O_{2} + 17512200 O_{3} + 8238232 O_{4} + 
 3653100 O_{5}  + 1515010 O_{6 a}\\
 &+ 
 1518264 O_{6 b} + 581900 O_{7}+ 93150 O_{8 a}+ 204030 O_{8 b} + 202500 O_{8 c}+ 
 63756 O_{9 a}\\
 &+ 64152 O_{9 b} + 17525 O_{10 a} + 
 16192 O_{10 b}  + 17525 O_{10 c} + 4050 O_{11 a}   + 4032 O_{11 b} + 
 759 O_{12 c}\\
 &+ 890 O_{12 d}+ 
 758 O_{12 e}+ 728 O_{12 f}+ 100 O_{13 b} + 100 O_{13 c} + 9 O_{14 b}  + 9 O_{14 e} + 
 O_{15 b}+ O_{15 e}\\
 &  + O_{16 a}+ O_{16 b} + O_{16 d}.
\end{aligned}    
\end{equation}

\begin{equation}\label{eq:reducible05}
\begin{aligned}
&O_3\otimes O_5 + O_2\otimes O_{7}\\
=&\,6476800 O_{2} + 3945834 O_{3} + 1884160 O_{4} + 802406 O_{5}+ 310224 O_{6 a}+ 
 311040 O_{6 b}\\
& + 
 107625 O_{7}  + 47104 O_{8 a}+ 32768 O_{8 b} + 31681 O_{8 c}+ 8395 O_{9 a}  + 8505 O_{9 b}+
 1782 O_{10 a}\phantom{iii} \\
&+ 2048 O_{10 b} + 1804 O_{10 c}+ 274 O_{11 a}    + 
 286 O_{11 b}+ 
 23 O_{12 c}   + 24 O_{12 e} + 36 O_{12 f} + 2 O_{13 b}\\
& + 2 O_{13 c}+ O_{14 a}+ O_{14 d} + O_{15 a}.
\end{aligned}    
\end{equation}

\begin{equation}\label{eq:reducible06}
\begin{aligned}
&O_3\otimes O_{6b}+ O_2\otimes O_{8c}\\
=&\,76452 O_{3} + 47104 O_{4} + 26875 O_{5} + 
 12696 O_{6 a}+ 
 11264 O_{6 b} + 5402 O_{7} + 47104 O_{8 a}\phantom{iiiiiiii,}\\
 &+ 2048 O_{8 b}+ 4325 O_{8 c} + 1012 O_{9 a}
 + 561 O_{9 b} + 198 O_{10 a}  + 2048 O_{10 b}+ 
 200 O_{10 c}\\
 &+
 274 O_{11 a}+ 
 36 O_{11 b}+ 552 O_{12 a}  + 550 O_{12 b} + 23 O_{12 c} + 24 O_{12 e}+ 46 O_{13 a}  + O_{13 b}\\
 &    + O_{13 c}+ 2 O_{14 a} + 
 2 O_{14 d}    + 3 O_{15 c}  +c_{17'}O_{17'},
\end{aligned}    
\end{equation}
where $c_{17'}=1/350979$. This $c_{17'}$ suggests that at type $17$ there probably exists an orbit $O_{17,141312p}$ such that its coefficient in the above decomposition is $1$. 

Unfortunately, we cannot separate the above decomposition of orbit products. There may be some opportunity to use Conway invariant Jacobi forms of index $4$ to separate these combinations. The above \eqref{eq:reducible04}, \eqref{eq:reducible05} and \eqref{eq:reducible06} appear in pairs in the Fourier expansion of any product of $A_1$ and a Conway invariant Jacobi form of index $2$, since $O_4$ is conjugate to $O_{6a}$ and $O_{8b}$, and $O_5$ is conjugate to $O_7$, and $O_{6b}$ is conjugate to $O_{8c}$, in the sense of modulo $2\Lambda$ (see \S \ref{subsec:conjugate classes}). Therefore, the above combinations are sufficient for us to fix $\Psi_{14,3}$ and $\Psi_{16,3}$ up to $O(q^6)$, and $A_1A_2,A_1\Phi_{12,2},A_1B_2,A_1HB_2$ up to $O(q^7)$. The Fourier expansions of these holomorphic Jacobi forms are very useful in determining the conjugate relations among Conway orbits modulo $2\Lambda$ and $3\Lambda$, which will be discussed in \S \ref{subsec:conjugate classes}.

\subsection{Conway invariant Jacobi forms of singular weight and non-trivial character}
Let $v_\eta$ be the multiplier system of the Dedekind eta function 
$$
v_{\eta}(A) = \frac{\eta(A\cdot \tau)}{\eta(\tau) \sqrt{c\tau+d}}, \quad \eta(\tau)=q^{\frac{1}{24}}\prod_{n=1}^\infty (1-q^n), \quad A=\left( \begin{array}{cc}
a & b \\ 
c & d
\end{array} \right)   \in \SL_2(\ZZ).
$$
We classify singular-weight Conway invariant holomorphic Jacobi forms of  index $t\leq 3$ and character $v_{\eta}^d$, where $d$ is an even integer which only depends on the conjugate class modulo $24$. Through the theta decomposition \cite{Gri94}, such forms are one-to-one corresponding to invariants of the Weil representation of $\SL_2(\ZZ)$ attached to $\Lambda / t\Lambda$ which are invariant under $\Co_0$. Similar to the case of trivial character, such a form takes the Fourier expansion of the form
\begin{equation}
\Phi_{12,t;a}(\tau,\mathfrak{z})=\sum_{n\in \frac{d}{24} + \ZZ} \sum_{\substack{\ell \in \Lambda \\ (\ell,\ell)=2nt}} f(n,\ell)q^n\zeta^{\ell},
\end{equation}
where $a=24/(24,d)$ indicates the order of the character $v_{\eta}^d$. 

Clearly, the singular-weight Conway invariant holomorphic Jacobi forms of index $1$ and non-trivial character do not exist. When index $t=2$, this type of form does not exist either. Otherwise, if such a form exists, then $d$ can only be 12, and the Fourier expansion has to be
$$
\Phi_{12,2;2}(\tau,\mathfrak{z})=q^{\frac{3}{2}}O_3 + O(q^{\frac{5}{2}}),
$$
which contradicts the reduction $\Phi_{12,2;2}(\tau,0)=0$ because $\Phi_{12,2;2}/\eta^{36}\in J_{-6, \Lambda,2}^{\w, \Co_0}$. 

When $t=3$, this type of Jacobi form does exist. We construct a Conway invariant holomorphic Jacobi form of weight $16$ and index $3$
$$
F_{16,3} =\frac{E_4(3359 \Psi_{12,3}-85562484 (252 A_3 + \Phi_{12,3}))}{7311718400}+\frac{6251063}{920}(HB_3-H \Psi_{14,3})-\frac{3 \Psi_{16,3}}{90675200},
$$
and find that
\begin{equation}\label{eq:special}
\begin{aligned}
\Phi_{12,3;3}:=&\, \eta^{-8} F_{16,3} \\
=&\,\frac{q^{\frac23}}{8096}\Big(8096O_2+77O_5q+(8096 O_{8 a}-4 O_{8 b}+77 O_{8 c}) q^2+(77 O_{11 a}-4 O_{11 b}) q^3\\
&+(77 O_{14 a}-4 O_{14 b}+8096 O_{14 c}+77 O_{14 d}-4 O_{14 e}) q^4+O(q^5)\Big).
\end{aligned}    
\end{equation}
By the product decomposition in Appendix \ref{app:orbitprod}, we are able to calculate the $q^n$-terms in the above bracket for $n\leq 3$. This implies that $\Phi_{12,3;3}$ is a singular-weight Conway invariant holomorphic Jacobi form of index $3$ and character $v_{\eta}^{16}$. We then determine the $q^4$-term by solving the linear equation
$$
x_a|O_{14a}| + x_b|O_{14b}| + x_c|O_{14c}| + x_d|O_{14d}| + x_e|O_{14e}| = 11420136000,
$$
where these coefficients $x_*$ can only be $8096$, $77$ or $-4$, and the constant on the right hand side is determined by the $q^{14/3}$-term of the reduction $\Phi_{12,3;3}(\tau,0)$. Besides, the $q^4$-term can also be computed by using the $q^5$-terms of holomorphic generators which are determined at the end of the previous subsection. We have checked that the two types of calculation are consistent. We then arrive at the following classification.

\begin{proposition}
The form $\Phi_{12,3;3}$ is the unique (up to scalar) singular-weight Conway invariant holomorphic Jacobi form of index $t\leq 3$ and non-trivial character. 
\end{proposition}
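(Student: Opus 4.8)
The plan is to pass to the theta decomposition and let modularity be enforced \emph{only} through division by powers of $\eta$, so that no direct computation with the Weil representation of $\Lambda/t\Lambda$ is needed. First I would record that, by the theta decomposition at singular weight (Lemma \ref{Lem:holomorphic} and \cite[Lemma 2.5]{Gri94}), any singular-weight Conway invariant holomorphic Jacobi form $\phi$ of index $t$ and character $v_\eta^d$ is a \emph{constant} combination $\phi=\sum_{[\gamma]} c_{[\gamma]}\,\theta_{[\gamma]}$, where $[\gamma]$ ranges over the $\Co_0$-orbits on $\Lambda/t\Lambda$, the coefficients $c_{[\gamma]}$ depend only on the orbit by Conway invariance, and $\theta_{[\gamma]}(\tau,\mathfrak{z})=\sum_{\ell\in[\gamma]}q^{(\ell,\ell)/(2t)}\zeta^\ell$. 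Comparing the action of $T=\left(\begin{smallmatrix}1&1\\0&1\end{smallmatrix}\right)$, which multiplies $\theta_{[\gamma]}$ by $e^{\pi i(\gamma,\gamma)/t}$ and $v_\eta^d$ by $e^{\pi i d/12}$, a coset $[\gamma]$ contributes only if $12(\gamma,\gamma)/t\equiv d\pmod{24}$. Running $(\gamma,\gamma)$ over the norms of the minimal representatives of $\Lambda/t\Lambda$ classified in \S \ref{subsec:orbits} then enumerates the admissible characters: for $t=1$ only $d\equiv0$; for $t=2$ the unique nontrivial class is $d\equiv12$ (carried by $O_3$); and for $t=3$ exactly $d\equiv8$ (carried by $O_4,O_7$) and $d\equiv16$ (carried by $O_2,O_5,O_{8b}$, after using $O_{8a}\sim_3O_2$ and $O_{8c}\sim_3O_5$). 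In each case the contributing cosets have pairwise distinct types, so the theta-nullwerte $\theta_{[\gamma]}(\tau,0)$ have distinct leading $q$-powers and are linearly independent.

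The engine of the argument is a vanishing principle, which I would isolate as a lemma: if such a $\phi$ satisfies $\phi=O(q^{n_1})$ with $n_1>1$, then $\phi=0$. Indeed $n_1\in\frac{d}{24}+\ZZ$, so $24n_1\equiv d\pmod{24}$, and $\eta^{24n_1}$ is a nonvanishing form of weight $12n_1$ and character $v_\eta^d$ whose $q$-order is exactly $n_1$; hence $\phi/\eta^{24n_1}\in J^{\w,\Co_0}_{12(1-n_1),\Lambda,t}$ has trivial character and \emph{negative} weight $12(1-n_1)<0$. As in the proof of Lemma \ref{lem:minimal2}, a weak Jacobi form of negative weight reduces to zero at $\mathfrak{z}=0$, so $\phi(\tau,0)=0$; writing $\phi(\tau,0)=\sum_{[\gamma]}c_{[\gamma]}\theta_{[\gamma]}(\tau,0)$ and invoking the linear independence above forces every $c_{[\gamma]}=0$.

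With this in hand the classification is immediate. For $t=2$ the only candidate has leading term at $q^{3/2}$, and $3/2>1$ makes it vanish; likewise for $t=3$ with $d\equiv8$ the leading term sits at $q^{4/3}$ with $4/3>1$, so no form exists — this recovers the $t\le2$ statements already in the text and disposes of the second index-$3$ character. For $t=3$ with $d\equiv16$ the leading term sits at $q^{2/3}$, where only $O_2$ occurs, so the map $\phi\mapsto[\phi]_{q^{2/3}}\in\CC\,O_2$ is defined; its kernel consists of forms with $\phi=O(q^{5/3})$, and since $5/3>1$ the vanishing principle makes this map injective, giving $\dim\le1$. As $\Phi_{12,3;3}$ is a nonzero form of this type (constructed above), the dimension is exactly $1$, and $\Phi_{12,3;3}$ is the unique nonzero such form up to scalar over all indices $t\le3$ and all nontrivial characters.

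I expect the main obstacle to be the bookkeeping in the first step rather than the modular analysis: one must pin down, for each admissible character, exactly which $\Co_0$-orbits on $\Lambda/t\Lambda$ are the minimal representatives and verify that their types are distinct, which relies on the full minimal-representative classification of $\Lambda/3\Lambda$ and on the coincidences $O_{8a}\sim_3O_2$ and $O_{8c}\sim_3O_5$. Once those coset data and the resulting linear independence of theta-nullwerte are secured, the $\eta$-quotient trick makes the remainder of the proof formal.
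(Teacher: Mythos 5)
Your proposal is correct and follows essentially the same route as the paper: the enumeration of admissible characters and contributing cosets via the leading $q$-exponents, and the contradiction obtained by dividing by the appropriate power of $\eta$ to produce a negative-weight weak Jacobi form whose reduction at $\mathfrak{z}=0$ must vanish, are exactly the mechanisms the paper uses (explicitly in its $t=2$ discussion and implicitly in the $t=3$ proof). Your isolation of the vanishing principle for leading order $n_1>1$ is just a cleaner packaging of the paper's leading-term contradiction with $\Psi_{12,3;3}(\tau,0)=0$.
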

\begin{proof}
It suffices to prove that there is no other singular Jacobi form of index $3$ and character $v_{\eta}^{16}$ which is linearly independent of $\Phi_{12,3;3}$, and there is no singular Jacobi form of index $3$ and character $v_{\eta}^{8}$. Suppose that the second singular-weight Jacobi form of index $3$ and character $v_{\eta}^{16}$ exists. Then its Fourier expansion will have the form 
$$
\Psi_{12,3;3} = q^{\frac{5}{3}}O_5 + O(q^{\frac{8}{3}}) \quad \text{or}
\quad q^{\frac{8}{3}}O_{8b} + O(q^{\frac{11}{3}}),
$$
which contradicts the reduction $\Psi_{12,3;3}(\tau,0)=0$. Suppose that the singular-weight Jacobi form of index $3$ and character $v_{\eta}^{8}$ exists. Then its Fourier expansion takes the form
$$
\Psi_{12,3;3} =q^{\frac{4}{3}}O_{4} + O(q^{\frac{7}{3}}) \quad \text{or}
\quad q^{\frac{7}{3}}O_{7} + O(q^{\frac{10}{3}}),
$$
which contradicts the reduction $\Psi_{12,3;3}(\tau,0)=0$ again. We then complete the proof. 
\end{proof}

\subsection{Conjugate classes of orbits modulo \texorpdfstring{$t\Lambda$}{}}\label{subsec:conjugate classes}
According to Lemma \ref{lem:coefficients}, in the Fourier expansion of a Conway invariant weak Jacobi form of index $t$, the coefficients $f(n_1,\ell_1)$ and $f(n_2,\ell_2)$ are equal if $2n_1t-(\ell_1,\ell_1)=2n_2t-(\ell_2,\ell_2)$ and $\ell_1 - \ell_2 \in t\Lambda$. From this basic fact, we can determine whether or not two orbits of $\Lambda / \Co_0$ are conjugate modulo $t\Lambda$ by observing the Fourier expansions of some nice Jacobi forms. We first consider the case of $t=2$. The orbits $O_0$, $O_2$, $O_3$ and $O_4$ form a minimal length representative system of $(\Lambda / \Co_0) / 2\Lambda$. Clearly, every orbit of odd type is conjugate to $O_3$ modulo $2\Lambda$. The following result describes the conjugate classes of orbits of even type $x\leq 16$, which can be found in \cite[Page 181]{ATLAS}. We give it a simple proof based on Jacobi forms.

\begin{proposition}\label{prop:modulo2}
For orbits of even type $x\leq 16$, the modulo $2\Lambda$ conjugate classes are as follows
\begin{align*}
&O_0:\quad O_{8a},\; O_{12a},\; O_{16a};&\\
&O_2:\quad O_{6b},\; O_{8c},\; O_{10b}, \; O_{10c} ,\; O_{12b} ,\; O_{12c} ,\; O_{14c} ,\; O_{14d} ,\; O_{14e} ,\; O_{16b} ,\; O_{16c};&\\
&O_4:\quad O_{6a},\; O_{8b},\; O_{10a}, \; O_{12d} ,\; O_{12e} ,\; O_{12f} ,\; O_{14a} ,\; O_{14b} ,\; O_{16d} ,\; O_{16e} ,\; O_{16f} ,\; O_{16g}.&
\end{align*}
\end{proposition}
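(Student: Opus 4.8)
The plan is to read off the entire classification from the single holomorphic Jacobi form $\Phi_{12,2}$. By Theorem \ref{th:phi} this is a Conway invariant holomorphic Jacobi form of singular weight $12$ and index $2$, so by Lemma \ref{Lem:holomorphic} every nonzero Fourier coefficient $f(n,\ell)$ sits at hyperbolic norm $4n-(\ell,\ell)=0$. Hence the hyperbolic-norm hypothesis of Lemma \ref{lem:coefficients}(1) is automatically met, and (as recorded in the remark following Lemma \ref{Lem:holomorphic}) the value $f(n,\ell)$ depends only on the $\Co_0$-orbit of the class of $\ell$ in $\Lambda/2\Lambda$. Because $\Phi_{12,2}$ is Conway invariant, all vectors of a fixed orbit $O_x$ carry one common coefficient, and that coefficient equals the one attached to the minimal-length representative of the class of $O_x$ modulo $2\Lambda$; thus the coefficient of $O_x$ in $\Phi_{12,2}$ is a complete invariant of its conjugacy class modulo $2\Lambda$.

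First I would fix the three even reference values. The minimal-length representatives of the even classes of $(\Lambda/\Co_0)/2\Lambda$ are $O_0$, $O_2$, $O_4$, and from the expansion \eqref{c2order8} they appear in $\Phi_{12,2}$ with coefficients $24$, $1$, $0$ respectively (read from the $q^0$-term, the $q^1$-term, and the vanishing $q^2$-term). These three values are distinct, so the coefficient of any even orbit in $\Phi_{12,2}$ determines unambiguously which of $O_0$, $O_2$, $O_4$ it is conjugate to.

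Next I would argue that every even orbit of type $x\le 16$ is conjugate modulo $2\Lambda$ to exactly one of $O_0$, $O_2$, $O_4$. By Lemma \ref{lem:data} the four basic Conway orbits $O_0$, $O_2$, $O_3$, $O_4$ form the minimal-length representative system of $(\Lambda/\Co_0)/2\Lambda$, so every orbit is conjugate modulo $2\Lambda$ to one of them; since $\ell-v\in 2\Lambda$ forces $(\ell,\ell)\equiv(v,v)\pmod 4$, an even-type orbit cannot be conjugate to the odd-type $O_3$, leaving $O_0$, $O_2$, $O_4$. Uniqueness is immediate, as these are distinct members of a representative system and hence pairwise non-conjugate. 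Reading the coefficients of all even orbits of type $x\le 16$ off \eqref{c2order8} and sorting them by the value $24$, $1$, or $0$ then reproduces precisely the three lists in the statement.

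The only genuine input is therefore \eqref{c2order8} itself: the Fourier expansion of $\Phi_{12,2}$ must be known through the $q^8$-term, which governs the type-$16$ orbits. This is exactly what was established in \S\ref{sec:phi12} by combining Theorem \ref{th:phi} with the reduction \eqref{eq:reduction of phi} to weight-$12$ modular forms on $\SL_2(\ZZ)$; in particular I would invoke the argument there ruling out the spurious $q^8$-solution $24(O_{16a}+O_{16b})$, using that only the non-primitive orbits $\orb(2v)$ carry the coefficient $24$. With that expansion in hand the proposition is a direct transcription, so I expect no further obstacle.
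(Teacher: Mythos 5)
Your proposal is correct and follows the same route as the paper: the paper's proof is exactly the observation that in the expansion \eqref{c2order8} of the singular-weight form $\Phi_{12,2}$ the terms $q^0O_0$, $qO_2$, $q^2O_4$ carry the distinct coefficients $24$, $1$, $0$, so the coefficient of each even orbit (which by Lemma \ref{lem:coefficients} depends only on its class modulo $2\Lambda$) identifies its conjugacy class. Your write-up merely spells out the supporting details (parity ruling out $O_3$, the non-circularity of determining \eqref{c2order8} through $q^8$) that the paper leaves implicit.
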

\begin{proof}
The proof follows from the Fourier expansion \eqref{c2order8} of $\Phi_{12,2}$ (does not rely on the conjugate relations modulo $2\Lambda$), where the terms $q^0 O_0$, $q O_2$ and $q^2 O_4$ have distinct coefficients $24$, $1$, $0$. 
\end{proof}

We now consider the case of $t=3$. We prove the following theorem. 
\begin{proposition}
For orbits of type $x \leq 16$, the modulo $3\Lambda$ conjugate classes are as follows
\begin{align*}
&O_0:\quad\,\, \emptyset; & \\
&O_2:\quad \,\,  O_{8a},\; O_{14c};&\\
&O_3:\quad \,\,  O_{12a},\; O_{12b}, \; O_{15a};&\\
&O_4:\quad \,\,  O_{10b},\; O_{13a}, \; O_{16a}, \; O_{16b},\; O_{16d};& \\
&O_5:\quad   \,\,O_{8c},\; O_{11a}, \; O_{14a}, \; O_{14d};&\\
&O_{6a}:\quad   O_{9a},\; O_{12c}, \; O_{12e}, \; O_{15b}, \; O_{15e};&\\
&O_{6b}:\quad   O_{12d},\; O_{15c};&\\
&O_{7}:\quad \,\,  O_{10a},\; O_{10c}, \; O_{13b}, \; O_{13c}, \; O_{16c},\; O_{16e},\; O_{16f},\; O_{16g};&\\
&O_{8b}:\quad   O_{11b},\; O_{14b}, \; O_{14e};&\\
&O_{9b}:\quad   O_{12f},\; O_{15d}.&
\end{align*}
\end{proposition}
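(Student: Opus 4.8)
The plan is to read the conjugacy relations directly off the Fourier coefficients of the index-$3$ forms built above, using the mechanism behind Proposition \ref{prop:modulo2}. The starting point is Lemma \ref{lem:coefficients}(1): for any index-$3$ Conway invariant weak Jacobi form $\varphi$ (possibly with a character), the coefficient $f(n,\ell)$ depends only on the pair $\big(6n-(\ell,\ell),\,\ell+3\Lambda\big)$. For an orbit $O_x$ of type $x$ set $n_x=\lceil x/3\rceil$, the least $n$ with $6n\geq 2x$, and call $c_\varphi(O_x):=f(n_x,v)$ (for any $v\in O_x$) its \emph{principal coefficient}; at this locus the discriminant $6n_x-(v,v)$ equals $0,4,2$ according as $x\equiv 0,1,2\pmod 3$. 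If $v-\sigma(u)\in 3\Lambda$ for some $\sigma\in\Co_0$, then expanding $(v,v)$ shows $(v,v)\equiv(u,u)\pmod 6$, so conjugate orbits share the same type modulo $3$ and hence the same principal discriminant; Lemma \ref{lem:coefficients}(1) then forces $c_\varphi(O_x)=c_\varphi(O_y)$ whenever $O_x\sim_3 O_y$. The task therefore reduces to matching each orbit's principal coefficient against those of the ten basic orbits of index $3$, which split by residue into $\{O_0,O_3,O_{6a},O_{6b},O_{9b}\}$, $\{O_4,O_7\}$ and $\{O_2,O_5,O_{8b}\}$.

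I would treat each residue class with a form whose principal coefficients separate the relevant basic orbits. For $x\equiv 0$ the two singular-weight forms $\Phi_{12,3}$ and $\Psi_{12,3}$ suffice: from \eqref{c2order8} and \eqref{psi123} the pairs $(c_{\Phi_{12,3}},c_{\Psi_{12,3}})$ equal $(-252,0),(1,0),(0,-644),(-1,48384),(0,243)$ on $O_0,O_3,O_{6a},O_{6b},O_{9b}$, which are pairwise distinct, so reading the $q^{x/3}$-terms of $\Phi_{12,3}$ and $\Psi_{12,3}$ for $x=3,6,9,12,15$ assigns every orbit of type divisible by $3$. For $x\equiv 2$ the character form $\Phi_{12,3;3}$ of \eqref{eq:special} does the job: its principal coefficients on $O_2,O_5,O_{8b}$ are distinct (proportional to $8096,77,-4$), and the displayed expansion already reads off $O_{8a},O_{8c},O_{11a},O_{11b},O_{14a},\dots,O_{14e}$. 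For $x\equiv 1$ the only basic orbits are $O_4$ and $O_7$, both at principal discriminant $4$; here one must choose the form carefully, since for instance $B_3$ is useless because $c_{B_3}(O_4)=c_{B_3}(O_7)=-36/531440$. The form $\Psi_{14,3}$ does separate them, with $c_{\Psi_{14,3}}(O_4)=-948/6850480\neq -462/6850480=c_{\Psi_{14,3}}(O_7)$, so its discriminant-$4$ coefficients classify the orbits of types $4,7,10,13$.

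The remaining input is purely the Fourier expansions to sufficient $q$-order, which is exactly what the previous subsection supplies. Orbits of type $\le 15$ occur at $q$-orders $\le 5$, within the range to which $\Phi_{12,3},\Psi_{12,3},\Phi_{12,3;3}$ and $\Psi_{14,3}$ are already determined; the type-$16$ orbits $O_{16a},\dots,O_{16g}$ occur only at $q^6$, beyond $\Psi_{14,3}$, so for these I would instead use one of the products $A_1A_2,\ A_1B_2,\ A_1HB_2,\ A_1\Phi_{12,2}$, whose expansions were fixed up to $O(q^7)$ and whose discriminant-$4$ principal coefficients again separate $O_4$ from $O_7$. Matching each principal coefficient to the corresponding basic value then produces the table. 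The main obstacle is thus not the separation argument but ensuring the expansions are trustworthy at these high orders: this rests on the orbit-product decompositions and the two pseudo-orbits of types $17,18$ introduced earlier, and on verifying at each step that the separating coefficients of the chosen form really are distinct, as the $B_3$ failure illustrates.
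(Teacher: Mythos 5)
Your proposal is correct and follows essentially the same route as the paper: both rest on Lemma \ref{lem:coefficients}(1) and read the conjugacy classes off the coefficients of $\Phi_{12,3}$ and $\Psi_{12,3}$ for types $\equiv 0$, of $\Phi_{12,3;3}$ for types $\equiv 2$, and of an auxiliary form separating $O_4$ from $O_7$ for types $\equiv 1 \pmod 3$. The only (immaterial) difference is that for the last residue class the paper uses $A_1A_2$ throughout, with coefficients $1/2049$ on $O_4$ and $0$ on $O_7$, rather than $\Psi_{14,3}$ supplemented by the products at $q^6$.
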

\begin{proof}
We first determine the conjugate classes of orbits of type $3x$. We derive the conjugate classes of $O_0$, $O_3$ and $O_{6b}$ from the Fourier expansion of $\Phi_{12,3}$ given in \eqref{c2order8}, because the terms $q^0O_0$, $qO_3$ and $q^2O_{6b}$ have distinct coefficients $-252$, $1$ and $-1$. We further determine the conjugate classes of $O_{6a}$ and $O_{9b}$ from the Fourier expansion of $\Psi_{12,3}$ given in \eqref{psi123}, because the terms $q^2O_{6a}$ and $q^3O_{9b}$ have distinct coefficients $-644$ and $243$. 

We then consider the conjugate classes of orbits of type $3x+2$. We read the information from the Fourier expansion of $\Phi_{12,3;3}$ given in \eqref{eq:special}, where the terms $q^{2/3}O_2$, $q^{5/3}O_5$ and $q^{8/3}O_{8b}$ have distinct coefficients $1$, $77/8096$ and $-4/8096$.

We observe the conjugate classes of orbits of type $3x+1$ from the Fourier expansion of $A_1A_2$, where the terms $q^2O_4$ and $q^3O_7$ have distinct  coefficients $1/2049$ and $0$.
\end{proof}

\subsection{Pullbacks of Conway invariant Jacobi forms to Jacobi forms of Eichler--Zaiger} 
We study the pullback of Conway invariant Jacobi forms to classical Jacobi forms of Eichler--Zaiger \cite{EZ85} along a Leech vector. The Fourier coefficients of these pullback forms are very useful to characterize the intersection of Leech vectors, which are not easy to obtain by brutal computations.

Let $v_a$ be a Leech vector of type $a$ and $z\in \CC$. For any $f_{k,t} \in J_{k,\Lambda,t}^{\w, \Co_0}$, the pullback $f_{k,t}(\tau, zv_a)$ along $v_a$ is a weak Jacobi form of weight $k$ and index $at$ in the sense of Eichler--Zagier \cite{EZ85}. 
The Conway invariance yields that $f_{k,t}(\tau, zv_a)$ depends only on the $\Co_0$-orbit of $v_a$. We express $f_{k,t}(\tau, zv_a)$ in terms of the standard generators 
\begin{equation}
\phi_0(\tau,z)=u^{\pm 1}+10+O(q), \quad 
\phi_{-2}(\tau,z)=u^{\pm 1}-2+O(q)    
\end{equation}
of the polynomial algebra of classical weak Jacobi forms of even weight and integral index over $\CC[E_4, E_6]$, where 
$$
u^{\pm n}:=e^{2\pi inz} + e^{-2\pi inz}, \quad \text{for $n\in\NN$}.
$$
For two Leech vectors $x$ and $y$, we define their \textit{intersection number} as the inner product $(x,y)$. 
By means of the expression of $f_{k,t}(\tau, zv_a)$, we calculate the pullbacks of Conway orbits
\begin{equation}\label{eq:pullback}
\orb(r)(zv_a)=\sum_{v\in \Co_0\cdot r} e^{2\pi i (v, v_a)z},  
\end{equation}
which describe the intersection numbers of Leech vectors. We next consider some examples.

We first determine the pullback of Conway invariant Jacobi form $A_1$ along $v_2$. From the obvious fact $A_1(\tau,zv_2)=1+O(q^2)$, we derive
\begin{equation}
\begin{aligned}
A_1(\tau, zv_2)=&\,\frac{1}{1728}\Big((7 E_4^3+5E_6^2)\phi_{0}^2-24E_4^2E_6\phi_0\phi_{-2}+3E_4(E_4^3+3E_6^2)\phi_{-2}^2\Big)\\
=&\,1+q^2 \big( u^{\pm 4}+4600  u^{\pm 2} +47104u^{\pm 1}+93150\big)\\
&+q^3 \big(47104  u^{\pm 3}+953856  u^{\pm 2}+4147200  u^{\pm 1}+6476800\big)+ O(q^4).
\end{aligned}    
\end{equation}
The above $q^2$-term characterizes the intersection between $O_2$ and $v_2$. More precisely, among all $196560$ Leech vectors of type $2$ there are exactly $1$, $4600$, $47104$ vectors whose intersection numbers with $v_2$ are respectively $4$, $2$, $1$, and there are exactly $93150$ vectors orthogonal to $v_2$. Similarly, we read the intersections between $O_3$, $O_4$, $O_5$, $O_7$ and $v_2$ from the $q^3$-, $q^4$-, $q^5$-, $q^7$-terms of the above pullback respectively. However, we cannot determine from $A_1(\tau, zv_2)$ itself the intersection between other Conway orbits and $v_2$,  because there are multiple orbits of the same type. 

We further consider the pullback of $\Phi_{12,2}$ along $v_2$. We find that its $q^{0}$- and $q^{2}$-terms are sufficient to determine
\begin{equation*}
\begin{aligned}
\Phi_{12,2}(\tau, z v_2)=&\, \frac{1}{82944}\Big(\phi_{0}^4 \left(511 E_4^3-415 E_6^2\right)-384 E_4^2 E_6 \phi_{0}^3 \phi_{-2}+2 E_4 \phi_0^2 \phi_{-2}^2 \left(779 E_6^2-491 E_4^3\right)\\
&\, +48 E_6 \phi_0 \phi_{-2}^3 \left(7 E_4^3-15 E_6^2\right)+3 E_4^2 \phi_{-2}^4 \left(61 E_4^3-29 E_6^2\right)\Big)\\
=&\,24+q \big( u^{\pm 4} +4600 u^{\pm 2} +47104 u^{\pm 1}+93150\big) +q^3 \big(4600  u^{\pm 6}+2049300  u^{\pm 4}\\
&+12953600  u^{\pm 3}+51791400  u^{\pm 2}+95385600  u^{\pm 1}+127719000\big)+O(q^4).
\end{aligned}    
\end{equation*}
The intersection between $O_{6b}$ and $v_2$ corresponds to the above $q^3$-term. We note that the intersection between $O_{8a}$ and $v_2$ follows from the intersection between the primitive orbit $O_2$ and $v_2$. Combining the two pullbacks together, we further determine the intersections between $O_{6a}$, $O_{8a}$, $O_{8b}$, $O_{8c}$, $O_{10a}$ and $v_2$. The intersections between $O_{9a}$, $O_{9b}$ and $v_2$ can be determined by further computing the pullback of $\Psi_{14,3}$, of which we omit the expression here. We also calculate the pullbacks of $A_1$ and $\Phi_{12,2}$ along $v_3$ and $v_4$, and use these Fourier expansions to determine the intersections of orbits of type less than $9$ with $v_3$ and $v_4$. The exponential polynomials to express these intersections are formulated in Appendix \ref{app:orbitinter}.

Certainly, by calculating the pullbacks of more Jacobi forms along $v_a$, we can determine more intersections between Conway orbits and $v_a$. For any $v,u\in \Lambda/\Co_0$, we have the following relation between pullbacks of Conway orbits defined by \eqref{eq:pullback}
$$
|\orb(u)| \cdot \orb(v)(zu)=|\orb(v)| \cdot \orb(u)(zv).
$$
Thus the known intersections are useful to calculate the pullbacks of Conway invariant Jacobi forms of large index along Leech vectors of large type.

\section{Conway invariant Jacobi forms of higher index}\label{sec:high index}
In this section we discuss Conway invariant Jacobi forms of index larger than $3$ and propose some open questions. We first determine the minimal-norm representatives of $\Lambda / 4\Lambda$. The following theorem is derived from Borcherds' thesis \cite{Bor85}.
\begin{theorem}\label{th:system4}
Representatives of $\Lambda / 4\Lambda$ of minimal norm may be found among vectors of types up to $16$, according to the weighted equality
\begin{equation}\label{eq:system4}
\begin{aligned}
4^{24}=\,&1+|O_{2}| + |O_{3}| + |O_{4}| + |O_{5}| + |O_{6a}| + |O_{6b}|+|O_{7}|+\frac12|O_{8a}|+ |O_{8b}|  + |O_{8c}| 
\\
&+|O_{9a}|+|O_{9b}|+|O_{10a}|+\frac12 |O_{10b}|+ |O_{10c}|+\frac12|O_{11a}|+|O_{11b}|+\frac12|O_{12a}|\\
&+|O_{12c}|+\frac14|O_{12d}|+\frac12|O_{12e}| +|O_{12f}|+\frac12|O_{13b}|+\frac13|O_{13c}|+\frac16|O_{14b}| \\
&+\frac14|O_{14d}|+\frac18|O_{14e}|+\frac{1}{14}(|O_{15b}|+|O_{15d}|)+ \frac{1}{48}|O_{16a}| +\frac{1}{32}|O_{16e}|.
\end{aligned}    
\end{equation}
In the above equality, the weights are the $4$-weights of vectors in the orbits (see Definition \ref{def:weights}). 
Moreover, we have the following facts.
\begin{itemize}
    \item[(i)] The orbits $O_{15b}$ and $O_{15d}$ are conjugate modulo $4\Lambda$, but they are not conjugate to other orbits of type $15$.
    \item[(ii)] Every orbit $O_x$ in \eqref{eq:system4} not of type $15$ is not conjugate to any orbit $O_y$ of type $y\leq x$ modulo $4\Lambda$. Thus the $4$-weight of $O_x$ coincides with the $4$-weight of any vector in $O_x$. 
    \item[(iii)] The $4$-weights of $O_{15b}$ and $O_{15d}$ are respectively $2$ and $12$, which are different from the $4$-weight of any vector in $O_{15b}$ or $O_{15d}$, i.e. $14$. 
\end{itemize}
\end{theorem}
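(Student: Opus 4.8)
The plan is to read the identity \eqref{eq:system4} as a coset-counting formula for $\Lambda/4\Lambda$, a group of order exactly $4^{24}$. Recall from \eqref{eq:St} that $\mathcal{S}_4$ is the set of vectors of minimal norm in their coset modulo $4\Lambda$; since the defining condition is $\Co_0$-invariant, $\mathcal{S}_4$ is a union of $\Co_0$-orbits, and the orbits appearing in \eqref{eq:system4} are precisely those contained in $\mathcal{S}_4$ (the type-$\le 16$ orbits absent from the list, such as $O_{16b}$, are never minimal in their cosets). The right-hand side should then be recognised as $\sum_C 1$ over all cosets $C$, reorganised according to the $\Co_0$-orbit of the minimal-norm vectors of $C$.

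First I would bound the types that can occur. Since the covering radius of $\Lambda$ satisfies $R^2=2$ in the normalisation where the minimal norm is $4$ (see \cite{CS99}), the sublattice $4\Lambda$, which is a rescaling of $\Lambda$ by the factor $4$, has squared covering radius $16R^2=32$. Hence every $v\in\Lambda$ lies within squared distance $32$ of $4\Lambda$, so it is congruent modulo $4\Lambda$ to a vector of norm at most $32$, i.e. of type at most $16$; the bound $16$ is sharp, being attained by the type-$16$ orbits that occur in \eqref{eq:system4}. This justifies restricting attention to the orbits of type $x\le 16$ tabulated in \S\ref{subsec:orbits}.

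Next comes the counting. Fix an orbit $O_x\subseteq\mathcal{S}_4$ and a vector $v\in O_x$. Because $\Co_0=\operatorname{Aut}(\Lambda)$ preserves $4\Lambda$, the number $\#\{u\in O_x : u-v\in 4\Lambda\}$ — the $4$-weight of the orbit $O_x$ of Definition \ref{def:weights} — is independent of $v$; denote it $w_x$. The map $\sigma\mapsto\sigma(v)$ then shows that $O_x$ meets exactly $|O_x|/w_x$ distinct cosets, each in $w_x$ points, and that the cosets met by $O_x$ form a single $\Co_0$-orbit. Consequently two orbits of equal type are either disjoint modulo $4\Lambda$ or meet the very same family of cosets, the latter holding precisely when they are conjugate modulo $4\Lambda$. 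Summing $|O_x|/w_x$ over a set of representatives of the conjugacy classes, and adding $1$ for the zero coset, must therefore reproduce $4^{24}$. For an orbit forming a singleton conjugacy class, every coset it meets contains minimal vectors from that one orbit only, so $w_x$ equals the $4$-weight of a vector in $O_x$; this gives part (ii) and the displayed divisors.

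The remaining work, which is the main obstacle, is the precise determination of the orbit weights $w_x$ together with the single nontrivial conjugacy relation. The relevant conjugacy relations modulo $4\Lambda$ I would extract from Borcherds' explicit description of the $31$ orbits of $\Lambda/4\Lambda$ in \cite{Bor85}, or, following the method of \S\ref{subsec:conjugate classes}, by comparing Fourier coefficients of Conway invariant Jacobi forms of index $4$. These data must confirm that no two distinct orbits of equal type $x\le 16$ are conjugate modulo $4\Lambda$ except $O_{15b}$ and $O_{15d}$, establishing (i). For this exceptional pair one computes the orbit $4$-weights $w_{15b}=2$ and $w_{15d}=12$; since $O_{15b}\sim_4 O_{15d}$, each shared coset then contains $2+12=14$ minimal vectors, so the common vector $4$-weight is $14$ — distinct from both orbit weights, which is exactly (iii) — and the number of such cosets is $|O_{15b}|/2=|O_{15d}|/12=(|O_{15b}|+|O_{15d}|)/14$, accounting for the merged term in \eqref{eq:system4}. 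The delicate step is verifying this weight discrepancy and the solitary conjugacy, for which Borcherds' thesis is indispensable.
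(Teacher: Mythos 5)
Your counting framework is sound and cleanly presented: the identity is indeed $\sum_C 1$ over cosets of $4\Lambda$, reorganised by the $\Co_0$-orbit of the minimal vectors, and your observations that each coset met by an orbit $O_x\subseteq\mathcal{S}_4$ contains exactly $w_x$ of its points, and that conjugate orbits share the same family of cosets (so the merged term $\frac{1}{14}(|O_{15b}|+|O_{15d}|)$ is accounted for by $w_{15b}+w_{15d}=2+12=14$), are all correct. Your covering-radius argument for the bound ``type $\leq 16$'' is a nice touch that the paper does not use (it reads the bound off Borcherds' thesis directly). However, there is a genuine gap where you write that the weights $w_x$ and the conjugacy relations can be ``extracted from Borcherds' explicit description of the $31$ orbits.'' Borcherds' thesis supplies only the \emph{abstract} data: for each type it gives the number of minimal-norm orbits and the sizes of the associated Dynkin diagrams (equivalently, the vector $4$-weights), e.g.\ ``five orbits of type $12$ with weights $1,1,2,2,4$.'' It does not identify these with the named ATLAS orbits $O_{12c},O_{12d},\dots$, nor say which named orbits fail to be minimal. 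That identification is precisely the content of the theorem, and it is where all the work lies.

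The paper closes this gap by a three-step matching argument that your proposal does not supply: (a) ATLAS shape computations establish certain conjugacies directly (e.g.\ $O_{11a}$ has shape $u_3+2v_2$, whence $u_3+2v_2\sim_4 u_3-2v_2$ and $w_{11a}=2$; similarly $O_{12b}\sim_4 O_{8c}$, $O_{13a}\sim_4 O_{9a}$, $O_{14a}\sim_4 O_{10a}$, $O_{14c}\sim_4 O_{6b}$, $O_{15a}\sim_4 O_7$, $O_{16b}\sim_4 O_{12c}$); (b) the mod-$2$ conjugacy classes of Proposition \ref{prop:modulo2} restrict which type-$16$ orbits can be conjugate mod $4\Lambda$; and (c) the remaining $12\times 2\times 6\times 16\times 17=39168$ candidate assignments of weights to named orbits are scanned against the weighted equality itself, which turns out to have a unique solution. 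Note also that your proposed fallback --- comparing Fourier coefficients of Conway invariant Jacobi forms of index $4$ as in \S\ref{subsec:conjugate classes} --- is not available here: the paper does not construct the index-$4$ generators and explicitly lists that as an open problem, and the statement you are proving is itself an input to the index-$4$ theory (it determines the rank $31$ and $\delta_4$). To complete your proof you would need to carry out step (a)--(c) or an equivalent determination of the $w_x$ for the named orbits.
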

\begin{proof}
We know from Borcherds' thesis that the $\Co_0$-orbits of $\Lambda / 4\Lambda$ correspond one-to-one to the non-empty Dynkin diagrams related to orbits of vectors in $\mathrm{II}_{25,1}$ of height $4$, and the size of the Dynkin diagram describes the $4$-weight of any vector in the orbit, that is the number of Leech vectors of type $x$ which are conjugate (modulo $4\Lambda$) to a given vector of type $x$ (see Definition \ref{def:weights} for notions). We refer to \cite[Figure 2, Page 91]{Bor85} for details. More precisely, the minimal-norm representative system of $\Lambda / 4\Lambda$ consists of the following orbits. 
\begin{enumerate}
    \item All orbits of type $x\leq 11$. For these orbits, their $4$-weights coincide with the $4$-weights of vectors. There are unique orbits of types $8$, $10$ and $11$ which have $4$-weight $2$. The other orbits have $4$-weight $1$.
    \item Five orbits of type $12$, whose vectors have $4$-weights $1$, $1$, $2$, $2$ and $4$.
    \item Two orbits of type $13$, whose vectors have $4$-weights $2$ and $3$.
    \item Three orbits of type $14$, whose vectors have $4$-weights $4$, $6$ and $8$.
    \item One orbit of type $15$, whose any vector has $4$-weight $14$.
    \item Two orbits of type $16$, whose vectors have $4$-weights $32$ and $48$.
\end{enumerate}
   
Firstly, it is easy to determine the orbits of type $x\leq 11$ and weight $2$ by observing their shapes in ATLAS \cite{ATLAS}. For example, $O_{11a}$ has shape $u_3+2v_2$, and then $u_3+2v_2\sim_4 u_3-2v_2$, which implies that $O_{11a}$ has $4$-weight $2$ and thus $O_{11b}$ has $4$-weight $1$. 

For orbits of type $12$, we see from their shapes that $O_{12b}\sim_4 O_{8c}$. Thus any two of the remaining $5$ orbits of type $12$ are not conjugate modulo $4\Lambda$, and they are the orbits of minimal length in (2). Clearly, the $4$-weight of the non-primitive orbit $O_{12a}$ equals the $2$-weight of $O_3$ which is $2$. Then the other $4$ orbits must have $4$-weights $1$, $1$, $2$ and $4$. We need to consider $12$ possibilities in (2).

Again, we see from their shapes that $O_{13a}\sim_4 O_{9a}$. We cannot determine the weights of the other two orbits of type $13$. Thus we need to consider $2$ possibilities in (3).

There are $5$ orbits of type $14$. By the shapes in the ATLAS, $O_{14a}\sim_4 O_{10a}$, $O_{14c}\sim_4 O_{6b}$. Thus the other $3$ orbits are the orbits in (4) with $4$-weights $4$, $6$ and $8$. There are $6$ possibilities in (4). 

We have $O_{15a}\sim_4 O_7$ by their shapes. One or multiple of $O_{15b}$, $O_{15c}$, $O_{15d}$ and $O_{15e}$ will be the orbits of minimal length modulo $4\Lambda$ whose vectors have $4$-weight $14$.  Thus we need to consider all $2^4=16$ possibilities in (5). 

We first conclude from the shapes that $O_{16b}\sim_4 O_{12c}$. Since $O_4$ has minimal norm modulo $2\Lambda$, the non-primitive orbit $O_{16a}=\orb(2v_4)$ has minimal norm modulo $4\Lambda$. Moreover, the $4$-weight of $O_{16a}$ equals the $2$-weight of $O_{4}$ which is $48$. Clearly, if $v\sim_4 u$ then $v\sim_2 u$.  By Proposition \ref{prop:modulo2}, $O_{16c} \not\sim_4 O_{16a}$, $O_{16d}$, $O_{16e}$, $O_{16f}$, $O_{16g}$.  We cannot distinguish the orbits of type $16$ and $4$-weight $32$, which may be either $O_{16c}$, or one or multiple of $O_{16d}$, $O_{16e}$, $O_{16f}$ and $O_{16g}$. Thus we need to consider $1+2^4=17$ possibilities in (6). 

In total, we need to check $12\times 2\times 6 \times 16 \times 17 = 39168$ possibilities. By scanning the weighted equality of form \eqref{eq:system4}, we find that there is a unique solution which is exactly \eqref{eq:system4}. The properties (1)--(4) follow from this equality and the above discussion. 
\end{proof}

The above theorem yields that $O_{16c}$ is conjugate to some orbit of smaller type modulo $4\Lambda$. By Proposition \ref{prop:modulo2}, $O_{16c}$ is conjugate to $O_{8c}$ or $O_{12c}$ modulo $4\Lambda$. We see from the Fourier expansion of $\Phi_{12,4}$ given in \eqref{c2order8} that $O_{16c}\not\sim_4 O_{8c}$.  Thus $O_{16c}\sim_4 O_{12c}\sim_4 O_{16b}$. 

The above theorem yields the following facts on Conway invariant Jacobi forms of index $4$. 

\begin{proposition}
The rank of $J_{*,\Lambda,4}^{\w, \Co_0}$ is $31$. The basic Conway orbits of index $4$ are the $32$ orbits in \eqref{eq:system4}.
The number $\delta_4$ defined in Lemma \ref{lem:weak-holo} is $85$.
\end{proposition}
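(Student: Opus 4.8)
The plan is to obtain all three assertions as immediate consequences of Theorem~\ref{th:system4}, combined with the structural statements in Lemmas~\ref{lem:weak-free} and~\ref{lem:weak-holo}; the genuine work has already been done in the classification of the minimal-norm representatives of $\Lambda/4\Lambda$. First I would invoke Lemma~\ref{lem:weak-free}, by which the rank of $J_{*,\Lambda,4}^{\w,\Co_0}$ over $M_*(\SL_2(\ZZ))$ equals $s(4)=|(\mathcal{S}_4/\Co_0)/4\Lambda|$, while the basic Conway orbits of index $4$ are exactly those indexed by $\mathcal{S}_4/\Co_0$. Theorem~\ref{th:system4} identifies $\mathcal{S}_4/\Co_0$ with the set of orbits appearing in the weighted equality~\eqref{eq:system4}: all minimal-norm representatives have type $\leq 16$, and the summands there (with the leading $1$ standing for $O_0$) list precisely these orbits. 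A direct count of the summands gives $32$ basic orbits, which settles the second assertion.

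Next I would pass from the $32$ basic orbits to $s(4)$ by imposing conjugacy modulo $4\Lambda$. Part~(i) of Theorem~\ref{th:system4} gives $O_{15b}\sim_4 O_{15d}$, while parts~(i) and~(ii) together guarantee that no other pair among the $32$ orbits is conjugate modulo $4\Lambda$. Hence exactly one identification occurs, so that $s(4)=32-1=31$, which is the claimed rank.

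Finally, for $\delta_4$ I would apply the formula of Lemma~\ref{lem:weak-holo} with $t=4$. Since the type of a vector $v$ is $\tfrac{1}{2}(v,v)$, writing $x$ for this type we have $\epsilon\!\left(\tfrac{(v,v)}{8}\right)=\lceil x/4\rceil$, and the sum runs over the $31$ elements of $(\mathcal{S}_4/\Co_0)/4\Lambda$. Grouping by the value of the ceiling, there is one representative of type $0$ (contributing $0$), three of types $2,3,4$ (contributing $1$ each), seven of types $5,6,7,8$ (contributing $2$ each), twelve of types $9,10,11,12$ (contributing $3$ each), and eight of types $13,14,15,16$ (contributing $4$ each), for a total of $0+3+14+36+32=85$. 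The only point requiring care is the bookkeeping of how the $31$ representatives distribute among the type-ranges, together with the single coincidence $O_{15b}\sim_4 O_{15d}$; since both are supplied by Theorem~\ref{th:system4}, there is no substantial obstacle in the proposition itself.
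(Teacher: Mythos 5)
Your proposal is correct and follows exactly the route the paper intends: the paper states this proposition as an immediate consequence of Theorem~\ref{th:system4} together with Lemmas~\ref{lem:weak-free} and~\ref{lem:weak-holo}, and your count of the $32$ orbits, the single identification $O_{15b}\sim_4 O_{15d}$ giving $s(4)=31$, and the ceiling-sum $0+3\cdot 1+7\cdot 2+12\cdot 3+8\cdot 4=85$ all check out. The only detail worth noting is that conjugate orbits in $\mathcal{S}_4/\Co_0$ necessarily have equal norm, so parts (i) and (ii) of the theorem really do rule out all identifications other than the one you use; this is already observed in \S\ref{sec:basic results}.
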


Besides, we conclude from $\Delta^4 J_{*,\Lambda,4}^{\w, \Co_0} \subset J_{*,\Lambda,4}^{\Co_0}$ that $J_{k,\Lambda,4}^{\w,\Co_0}=\{0\}$ for $k<-36$.
We check that $A_4$, $\Phi_{12,4}$, $A_1(\tau, 2\mathfrak{z})$, $\Phi_{12,2}|T_{-}(2)$ are linearly independent. Thus $\dim J_{12,\Lambda,4}^{ \Co_0} \geq 4$. A similar argument to Lemma \ref{lem:singular3} gives upper bound $\dim J_{12,\Lambda,4}^{ \Co_0} \leq 9$. We do not know the exact dimension. We note that $\dim J_{12,\Lambda,6}^{\Co_0} \geq 5$, because the following five Jacobi forms are linearly independent.
$$
A_6, \quad \Phi_{12,6}, \quad \Phi_{12,2}|T_{-}(3), \quad \Phi_{12,3}|T_{-}(2), \quad \Psi_{12,3}|T_{-}(2).
$$

By Theorem \ref{MTH}, there exist Conway invariant weak Jacobi forms of weight $4$ and index $2$ whose $q^0$-terms are a single orbit $O_0$, $O_2$, $O_3$ or $O_4$. Similarly, there exist Conway invariant weak Jacobi forms of weight $4$ and index $3$ whose $q^0$-terms are a single orbit $O_0$, $O_2$, $O_3$, $O_4$, $O_5$, $O_{6a}$, $O_{6b}$, $O_7$, $O_{8b}$ or $O_{9b}$. Therefore, each of the following orbit products
$$
\{O_0, O_2, O_3, O_4 \} \otimes \{O_0, O_2, O_3, O_4, O_5, O_{6a}, O_{6b}, O_7, O_{8b}, O_{9b} \}
$$
can be regarded as the $q^0$-term of a Conway invariant weak Jacobi form of weight $8$ and index $5$. Thus every orbit appearing in the decompositions of these products has minimal norm modulo $5\Lambda$, and if two orbits of the same type have distinct coefficients in a given product decomposition, then they are not conjugate modulo $5\Lambda$. In this way, from the decompositions of orbit products determined in this paper we derive that all orbits of type $x\leq 15$, $O_{16a}$, $O_{16b}$, $O_{16c}$, $O_{16d}$ and $O_{16e}$ have minimal norm modulo $5\Lambda$, and any two of them are not conjugate modulo $5\Lambda$. To prove this claim, we also use the simple fact that if $O_x$ has minimal norm modulo $t\Lambda$ then it has minimal norm modulo $s\Lambda$ for all $s\geq t$. We do not know if $O_{16f}$ and $O_{16g}$ have minimal norm modulo $5\Lambda$. 

In fact, we can also determine some orbits of minimal norm modulo $4\Lambda$ from the decomposition of orbit products $O_2\otimes O_2$, $O_2\otimes O_3$, $O_2\otimes O_4$, $O_3\otimes O_3$ and $O_3\otimes O_4$. These minimal-length orbits are consistent with Theorem \ref{th:system4}.

\begin{question}
We formulate some questions related to Conway invariant Jacobi forms.
\begin{enumerate}
    \item What is the minimal weight of Conway invariant weak Jacobi forms of given index? We guess that $J_{k,\Lambda,t}^{\w, \Co_0}=\{0\}$ if $k<-6t$.
    \item To find a set of $24$ algebraically independent Conway orbits. 
    \item To give a formula to describe the rank of $J_{*,\Lambda,t}^{\w,\Co_0}$, i.e. the number of $\Co_0$-orbits of $\Lambda/t\Lambda$.
    \item To give a dimensional formula for the space of Conway invariant holomorphic Jacobi forms of singular weight and given index.
    \item Is the ring $J_{*,\Lambda,*}^{\w,\Co_0}$ finitely generated? If so, what is the maximal index of the generators?
\end{enumerate}
\end{question}

\bigskip

\noindent
\textbf{Acknowledgements} 
The authors thank Daniel Allcock and Kimyeong Lee for useful discussions, and thank Brandon Williams for pointing out \cite{Mar02} and for fruitful discussions on Borcherds' thesis. The authors also thank Richard Borcherds for valuable comments. 
KS is supported by KIAS Grant (QP081001). HW thanks Max Planck Institute for Mathematics in Bonn for its
hospitality where this work was started. HW is supported by the Institute for Basic Science (IBS-R003-D1). The authors also thank the referee for many valuable comments that make this paper better. 

\appendix
\section{Product decompositions of Conway orbits}\label{app:orbitprod}
We determine the following orbit product decompositions:
\begin{equation}\nonumber
    \begin{aligned}
O_{2}\otimes O_{2}=&\,196560O_0 \oplus  4600 O_{2} \oplus  552 O_{3} \oplus  46 O_{4} \oplus  2 O_{5} \oplus  
 2 O_{6 b}
\oplus  O_{8 a}
  \\
O_{2}\otimes O_{3}=&\,47104 O_{2} \oplus  11178 O_{3} \oplus  2048 O_{4} \oplus  275 O_{5}  \oplus  
 24 O_{6 a}\oplus  O_{7} \oplus  O_{8 c}
  \\
O_{2}\otimes O_{4}=&\,93150 O_{2} \oplus  48600 O_{3} \oplus  16192 O_{4} \oplus  4050 O_{5}  \oplus  759 O_{6 a}  \oplus  891 O_{6 b}\oplus  
 100 O_{7}\\
 & \oplus  8 O_{8 b}\oplus  O_{9 a}\oplus  
 O_{10 b}
  \\
O_{3}\otimes O_{3}=&\,16773120O_0 \oplus  953856 O_{2} \oplus  257600 O_{3} \oplus  64768 O_{4} \oplus  
 14256 O_{5} \oplus  2576 O_{6 a}\\
 &\oplus  
 2816 O_{6 b}\oplus  352 O_{7}  \oplus  32 O_{8 b} \oplus  2 O_{9 b} \oplus  2 O_{10 c}\oplus  O_{12 a}
  \\
O_{2}\otimes O_{5}=&\,47104 O_{2} \oplus  75900 O_{3} \oplus  
   47104 O_{4}\oplus  19450 O_{5} \oplus  6072 O_{6a}\oplus  
   5632 O_{6b}  \oplus  
   1452 O_{7}\\
   &\oplus  
   256 O_{8b}\oplus  275 O_{8c}  \oplus  23 O_{9a} \oplus  
   33 O_{9b} \oplus  2 O_{10a} \oplus  
   O_{11a} \oplus  O_{12b}\\
O_{3}\otimes O_{4}=&\,4147200 O_{2} \oplus  1536975 O_{3} \oplus  518144 O_{4} \oplus  157950 O_{5}  \oplus  42504 O_{6 a} \oplus  
 41472 O_{6 b}\\
 &\oplus  
 9725 O_{7} \oplus  1792 O_{8 b}\oplus  
 2025 O_{8 c}\oplus  253 O_{9 a} \oplus  243 O_{9 b} \oplus  22 O_{10 a} \oplus  O_{11 b}  \oplus  O_{12 c}
   \\
O_{2}\otimes O_{6a}=&\, 48600 O_{3} \oplus  64768 O_{4} \oplus  44550 O_{5}  \oplus  
 21252 O_{6 a} \oplus  
 20736 O_{6 b}\oplus  7800 O_{7}   \oplus  2240 O_{8 b}\\
 &\oplus  
 2025 O_{8 c} \oplus  506 O_{9 a}\oplus  486 O_{9 b} \oplus  77 O_{10 a}  \oplus  100 O_{10 c} \oplus  8 O_{11 b}\oplus  O_{12 e}\oplus  O_{13 a} 
  \\
O_{2}\otimes O_{6b}=&\, 4600 O_{2} \oplus  1012 O_{4} \oplus  550 O_{5}  \oplus  276 O_{6 a}\oplus  
 1782 O_{6 b} \oplus  100 O_{7}\oplus  
 4600 O_{8 a} \oplus  28 O_{8 b}\\
 &\oplus  275 O_{8 c} \oplus  23 O_{9 a} \oplus  O_{10 a}    \oplus  44 O_{10 b} \oplus  2 O_{11 a} \oplus  3 O_{12 d}\oplus  
 O_{14 c}
 \\
O_{2}\otimes O_{8a}=&\,    O_{2} \oplus  2 O_{6 b} \oplus  O_{10 b} \oplus  O_{12 b} \oplus  
 O_{8 c} \oplus  O_{14 c}\oplus O_{18a}
\end{aligned}
\end{equation}
\section{Intersection between Conway orbits and Leech vectors}\label{app:orbitinter}
For the intersection between $O_*$ and any Leech vector of type $2$, we have
\be\nonumber
\ba
&O_2: u^{\pm 4}+4600  u^{\pm 2} +47104u^{\pm 1}+93150\\
&O_3: 512(92 u^{\pm 3}+1863 u^{\pm 2}+8100 u^{\pm1}+12650)\\
&O_4: 4050(23 u^{\pm 4}+1024 u^{\pm 3}+8096 u^{\pm 2}+23552 u^{\pm1}+32890) \\
&O_5:23552(2 u^{\pm 5}+275 u^{\pm 4}+4050 u^{\pm 3}+19450 u^{\pm 2}+45100 u^{\pm1}+58806) \\
&O_{6a}: 518400(8 u^{\pm 5}+253 u^{\pm 4}+2024 u^{\pm 3}+7176 u^{\pm 2}+14352 u^{\pm1}+17894) \\
&O_{6b}: 2300(2 u^{\pm 6}+891 u^{\pm 4}+5632 u^{\pm 3}+22518 u^{\pm 2}+41472 u^{\pm1}+55530) \\
&O_{7}: 953856(u^{\pm6}+100 u^{\pm5}+1452 u^{\pm4}+7900 u^{\pm3}+22825 u^{\pm2}+41152 u^{\pm1}+49700) \\
&O_{8a}: u^{\pm 8}+4600  u^{\pm 4} +47104u^{\pm 2}+93150 \\
&O_{8b}: 16394400(2 u^{\pm6}+64 u^{\pm5}+567 u^{\pm4}+2368 u^{\pm3}+5902 u^{\pm2}+9856 u^{\pm1}+11622) \\
&O_{8c}: 47104(u^{\pm7}+275 u^{\pm5}+2300 u^{\pm4}+9153 u^{\pm3}+24576 u^{\pm2}+37675 u^{\pm1}+48600) \\
&O_{9a}: 4147200(u^{\pm 7}+23 u^{\pm 6}+529 u^{\pm 5}+3059 u^{\pm 4}+10879 u^{\pm 3}+24035 u^{\pm 2}+37743 u^{\pm1}+44022) \\
&O_{9b}: 32972800(11 u^{\pm 6}+162 u^{\pm 5}+1053 u^{\pm 4}+3586 u^{\pm 3}+8019 u^{\pm 2}+12636 u^{\pm1}+14586)\\
&O_{10a}: 47692800(2 u^{\pm7}\!+78 u^{\pm6}\!+814 u^{\pm5}+3993 u^{\pm4}+11882 u^{\pm3}+24266 u^{\pm2}+36454 u^{\pm1}\!+41582) 
\ea
\ee
For the intersection between $O_*$ and any Leech vector of type $3$, we have
\be\nonumber
\ba
&O_2: 6(92 u^{\pm3}+1863 u^{\pm2}+8100 u^{\pm1}+12650
) \\
&O_3: u^{\pm6}+11178 u^{\pm4}+257600 u^{\pm3}+1536975 u^{\pm2}+3934656 u^{\pm1}+5292300\\
&O_4: 6075(8 u^{\pm5}+253 u^{\pm4}+2024 u^{\pm3}+7176 u^{\pm2}+14352 u^{\pm1}+17894) \\
&O_5: 276  (275 u^{\pm6}+14256 u^{\pm5}+157950 u^{\pm4}+743600 u^{\pm3}+1986525 u^{\pm2}+3434400 u^{\pm1}+4099108)\\
&O_{6a}: 16200(3 u^{\pm7}+322 u^{\pm6}+5313 u^{\pm5}+33396 u^{\pm4}+115115 u^{\pm3}+256542 u^{\pm2}+403857 u^{\pm1}\\
&\phantom{O_{6a}:}+467544) \\
&O_{6b}: 6900(11 u^{\pm6}+162 u^{\pm5}+1053 u^{\pm4}+3586 u^{\pm3}+8019 u^{\pm2}+12636 u^{\pm1}+14586
)  \\
&O_{7}: 11178(u^{\pm8}+352 u^{\pm7}+9725 u^{\pm6}+84800 u^{\pm5}+376750 u^{\pm4}+1053504 u^{\pm3}+2075603 u^{\pm2}\\
&\phantom{O_{7}: }+3053600 u^{\pm1}+3464450)\\
&O_{8a}: 6(92 u^{\pm6}+1863 u^{\pm4}+8100 u^{\pm1}+12650
) \\
&O_{8b}: 1536975(u^{\pm8}+56 u^{\pm7}+728 u^{\pm6}+4264 u^{\pm5}+14924 u^{\pm4}+36024 u^{\pm3}+64744 u^{\pm2}+90728 u^{\pm1}\\
&\phantom{O_{8b}:}+101222) \\
&O_{8c}: 552(u^{\pm9}+2025 u^{\pm7}+22528 u^{\pm6}+137700 u^{\pm5}+476928 u^{\pm4}+1151700 u^{\pm3}+2073600 u^{\pm2}\\
&\phantom{O_{8c}:}+2902878 u^{\pm1}+3238400
) 
\ea
\ee
For the intersection between $O_*$ and any Leech vector of type $4$, we have
\be\nonumber
\ba
&O_2:2(23 u^{\pm4}+1024 u^{\pm3}+8096 u^{\pm2}+23552 u^{\pm1}+32890) \\
&O_3:256(8 u^{\pm5}+253 u^{\pm4}+2024 u^{\pm3}+7176 u^{\pm2}+14352 u^{\pm1}+17894) \\
&O_4: u^{\pm8}+16192 u^{\pm6}+518144 u^{\pm5}+4595032 u^{\pm4}+19171328 u^{\pm3}+47829696 u^{\pm2}+79794176 u^{\pm1}\\
&\phantom{O_4:}+94184862 \\
&O_5:23552(2 u^{\pm7}+78 u^{\pm6}+814 u^{\pm5}+3993 u^{\pm4}+11882 u^{\pm3}+24266 u^{\pm2}+36454 u^{\pm1}+41582) \\
&O_{6a}:256( 253 u^{\pm8}+14168 u^{\pm7}+184368 u^{\pm6}+1078792 u^{\pm5}+3779498 u^{\pm4}+9114072 u^{\pm3}\\
&\phantom{O_{6a}:}+16396432 u^{\pm2}+22954184 u^{\pm 1}
+25634466 ) \\
&O_{6b}: 92(11 u^{\pm8}+512 u^{\pm7}+6864 u^{\pm6}+39936 u^{\pm5}+139854 u^{\pm4}+337920 u^{\pm3}+606832 u^{\pm2}\\
&\phantom{O_{6b}:}+850432 u^{\pm1}+949278) \\
&O_{7}: 11776(
4 u^{\pm9}+389 u^{\pm8}+6776 u^{\pm7}+48532 u^{\pm6}+200772 u^{\pm5}+564135 u^{\pm4}+1181756 u^{\pm3}\\
&\phantom{O_7:}+1943044 u^{\pm2}+2592004 u^{\pm1}+2846536) \\
&O_{8a}:2(23 u^{\pm8}+1024 u^{\pm6}+8096 u^{\pm4}+23552 u^{\pm2}+32890)  \\
&O_{8b}:4048(4 u^{\pm10}+896 u^{\pm9}+23011 u^{\pm8}+209664 u^{\pm7}+1038804 u^{\pm6}+3398784 u^{\pm5}+8194512 u^{\pm4}\\
&\phantom{O_{8b}:}+15480192 u^{\pm3}+23860008 u^{\pm2}+30652288 u^{\pm1}+33300674)  \\
&O_{8c}:47104(u^{\pm9}+22 u^{\pm8}+209 u^{\pm7}+1024 u^{\pm6}+3356 u^{\pm5}+8096 u^{\pm4}+15292 u^{\pm3}+23552 u^{\pm2}\\
&\phantom{O_{8c}:}+30294 u^{\pm1}+32868) 
\ea
\ee
\bibliographystyle{plainnat}
\bibliofont
\bibliography{refs}

\end{document}